\newcommand{\lf}{\lfloor}
\newcommand{\rf}{\rfloor}
\newcommand{\xx}{x^{\eps,\delta}}
\newcommand{\yy}{y^{\eps,\delta}}
\newcommand{\zz}{z^{\eps,\delta}}
\newcommand{\ale}{\alpha^{\eps}}
\newtheorem{thm}{Theorem}[section]
\newtheorem {asp}{Assumption}[section]
\newtheorem{lm}{Lemma}[section]
\newtheorem{rmk}{Remark}[section]
\newtheorem{prop}[thm]{Proposition}
\theoremstyle{definition}
\newtheorem*{conv*}{Convention}
\theoremstyle{remark}
\numberwithin{equation}{section}
\DeclareMathOperator{\inte}{Int}
\newcommand{\eps}{\varepsilon}
\newcommand{\M}{\mathcal{M}}
\newcommand{\K}{\mathcal{K}}
\newcommand{\F}{\mathcal{F}}
\newcommand{\E}{\mathbb{E}}
\newcommand{\N}{\mathbb{N}}
\newcommand{\PP}{\mathbb{P}}
\newcommand{\R}{\mathbb{R}}
\newcommand{\cd}{(\cdot)}
\numberwithin{equation}{section}
\newcommand{\1}{\boldsymbol{1}}
\newcommand{\wdt}{\widetilde}
\newcommand{\bed}{\begin{displaymath}}
\newcommand{\eed}{\end{displaymath}}
\newcommand{\bea}{\bed\begin{array}{rl}}
\newcommand{\eea}{\end{array}\eed}
\newcommand{\ad}{&\!\!\!\disp}
\newcommand{\aad}{&\disp}
\newcommand{\barray}{\begin{array}{ll}}
\newcommand{\earray}{\end{array}}
\newcommand{\dist}{{\rm dist}}
\def\disp{\displaystyle}
\def\trace{\text{trace}}
\def\bar{\overline}
\def\a.s{\text{\;a.s.\;}}
\keywords{Random perturbation; random dynamical system; invariant probability measure;  hybrid diffusion; rapid switching; limit cycle}
\subjclass[2010]{34C05, 60H10, 92D25.}
\begin{document}
\title[Dynamical
systems with fast switching and slow diffusion]{Dynamical Systems
under Random Perturbations with Fast Switching and Slow Diffusion: Hyperbolic Equilibria and Stable Limit Cycles}\thanks{The work of N.H. Du was supported in part by NAFOSTED n$_0$ 101.02 - 2011.21; the work of A. Hening was supported in part by the
National Science Foundation under grant DMS-1853463; the work of D. Nguyen was
supported in part by the National Science Foundation under grant DMS-1853467; the research of G. Yin was supported in part by the National Science Foundation under grant
DMS-1710827.}
\thanks{We thank an anonymous referee who read the original version of the manuscript and suggested to study the more general setting of hyperbolic equilibria. This has significantly improved the paper.}

\author[N. H. Du]{Nguyen H. Du}
\address{Department of Mathematics, Mechanics and
Informatics\\
Hanoi National University\\
334 Nguyen Trai\\
Thanh Xuan\\
Hanoi\\
Vietnam}
\email{dunh@vnu.edu.vn}

\author[A. Hening]{Alexandru Hening }
\address{Department of Mathematics\\
Texas A\&M University\\
Mailstop 3368\\
College Station, TX 77843-3368\\
United States
}
\address{Department of Mathematics\\
Tufts University\\
Bromfield-Pearson Hall\\
503 Boston Avenue\\
Medford, MA 02155\\
United States
}
\email{al.hening@gmail.com}

\author[D. Nguyen]{Dang H. Nguyen }
\address{Department of Mathematics \\
University of Alabama\\
345 Gordon Palmer Hall\\
Box 870350 \\
Tuscaloosa, AL 35487-0350 \\
United States}
\email{dangnh.maths@gmail.com}

\author[G. Yin]{George Yin}
\address{Department of Mathematics\\
Wayne State University\\
Detroit, MI 48202\\
United States
}
\email{gyin@wayne.edu}

\maketitle

\begin{abstract}
This work is devoted to the study of long-term qualitative behavior of randomly perturbed dynamical systems.
The focus is on
certain
stochastic differential equations (SDE) with Markovian switching,
 when the switching is fast varying and the diffusion (white noise) is slowly changing.
Consider the system
\[
dX^{\eps,\delta}(t)=f(X^{\eps,\delta}(t), \alpha^\eps(t))dt+\sqrt{\delta}\sigma(X^{\eps,\delta}(t), \alpha^\eps(t))dW(t)
, \ X^{\eps,\delta}(0)=x, \ \alpha^\eps(0)=i,
\]
where $\alpha^\eps(t)$ is a
finite state Markov chain with irreducible generator $Q=(q_{\iota\ell})$.
The relative changing rates of
the switching and the diffusion are
highlighted by two small parameters $\eps$ and $\delta$.
Associated with the above stochastic differential equation, there is an averaged ordinary differential equation (ODE)
\[
d\bar X(t)=\bar f(\bar X(t))dt, \ \bar X(0)=x,
\]
where $\bar f(\cdot)=\sum_{\iota=1}^{m_0}f(\cdot, \iota)\nu_\iota$ and $(\nu_1,\dots,\nu_{m_0})$ is the unique stationary distribution of the Markov chain with generator $Q$.
Suppose that for each pair $(\eps,\delta)$, the process has an invariant probability measure $\mu^{\eps,\delta}$,
and that the averaged ODE has a limit cycle in which there is an averaged occupation measure $\mu^0$ for the averaged equation.
 It is proved in this paper that under weak conditions,
 if $\bar f$ has finitely many stable or hyperbolic fixed points, then $\mu^{\eps,\delta}$ converges weakly to $\mu^0$ as $\eps \to 0$ and $\delta \to 0$.
Our results generalize to the setting where the switching process $\alpha^\eps$ is state-dependent in that
\[
\PP\{\alpha^\eps(t+\Delta)=\ell | \alpha^\eps=\iota, X^{\eps,\delta}(s),\alpha^\eps(s), s\leq t\}=q_{\iota \ell}(X^{\eps,\delta}(t))\Delta+o(\Delta),~~ \iota\neq \ell
\]
as long as the generator $Q(\cdot)=(q_{\iota \ell}(\cdot))$ is locally bounded, locally Lipschitz, and irreducible for all $x\in\R^d$. Finally, we provide two examples in two and three dimensions in order to showcase our results.

\end{abstract}

\tableofcontents

\section{Introduction}\label{sec:int}
Natural phenomena are almost always influenced by different types of random noise. In order to better understand the world around us, it
is important
to study random perturbations of dynamical systems.
In the continuous dynamical systems setup,
the focus then shifts from the study of the behavior of deterministic differential equations to that of
differential equations with switching
(piecewise deterministic Markov processes) or stochastic differential equations with switching.
The long-term behavior of these systems can be analyzed by a careful study of the ergodic properties of the induced Markov processes.

Quite often, the ``white noise'' in the system is small compared to the deterministic component. In
such cases, one is usually interested in knowing how well the deterministic system approximates the stochastic one. It is common to model continuous-time phenomena by stochastic differential equations of the type

 \begin{equation}\label{eq1.1}			
dx^\delta(t)=f(x^\delta(t))dt+\sqrt{\delta}\sigma(x^\delta(t))dW(t),
\end{equation}
where $f\cd$ and $\sigma\cd$ are sufficiently smooth
functions, $W(\cdot)$ is a standard $m$-dimensional Brownian motion, and $\delta>0$ is a small parameter.
If we let $\delta\to  0$, one would expect that the solutions of \eqref{eq1.1} converge, in an appropriate sense, to
that
of a deterministic differential equation.

Different aspects
of the problem have been studied extensively starting with Freidlin and Wentzell \cite{FW70, FW98}, Fleming \cite{WF}, Kifer \cite{K81} and Day \cite{D82}.
If the process $x^\delta(t)$ has a unique ergodic probability measure $\mu^\delta$ for each $\delta>0$ and  the origin of the corresponding deterministic ODE
\begin{equation}\label{eq1.2}
dx=f(x)dt,
\end{equation}
is a globally asymptotic equilibrium point,
Holland established in \cite{CH1} asymptotic expansions of the expectation of
the underlying functionals with respect to the unique ergodic probability measures $\mu^\delta$.
In addition, in \cite{CH}, Holland considered the case when the ODE \eqref{eq1.2} has an asymptotically stable limit cycle and proved the weak convergence of the family $(\mu^\delta)_{\delta>0}$ to the unique stationary distribution that is concentrated on the limit cycle of the process from \eqref{eq1.2}.

Our interest in the current problem stems from applications
in ecology. Quite often, one models the dynamics of populations with continuous-time processes. This way we inherently assume that organisms can respond instantaneously to changes in the environment. However, in some cases the dynamics are better described by discrete-time models, in which demographic decisions are not made continuously. In order to model more complex systems, one has to analyze `hybrid' systems where both continuous and discrete dynamics coexist. Such systems arise naturally in ecology, engineering, operations research, and physics as well as in emerging applications in wireless communications, internet traffic modeling, and financial engineering; see \cite{YZ} for more references.

Recently, there has been renewed interest in studying piecewise deterministic
Markov
processes (PDMP) \cite{D84}. One may describe a PDMP by the use of a two component process. The first component is a continuous state process represented by the solution of a deterministic differential equation, whereas the second component is a discrete event process taking values in a finite set. This discrete event process is often modeled as a continuous-time Markov chain with a finite state space. At any given instance, the Markov chain takes a value (say $i$ in the state space), and the process sojourns in state $i$ for a random duration. During this period, the continuous state follows the flow given by a differential equation associated with $i$. Then at a random instance, the discrete event switches to another state $j\not =i$. The Markov chain sojourns in $j$ for a random duration, during which, the continuous state follows another flow associated with the discrete state $j$.

A careful study of such
processes has recently led to a
better understanding of
predator-prey communities where the predator evolves much faster than the prey \cite{C16} and for a possible explanation of how the competitive exclusion principle from ecology, which states that multiple species competing for the same number of small resources cannot coexist, can be violated because of switching \cite{BL16,HN18}.

It is natural to study the SDE counter-part of PDMP, that is, SDE with switching. Similarly to the piecewise deterministic Markov processes mentioned in the previous paragraph,
in this setting one follows a specific system of SDE for a random time after which the discrete event switches to another state, and the process is governed by a different system of SDE.
The resulting stochastic process has a discrete component (that switches among a finite number of discrete states) and a continuous component (the solution of SDE associate with
each fixed discrete event state).
We refer the reader to \cite{YZ} for an introduction to SDEs with switching.
Most of the work inspired by Freidlin and Wentzell has been concerned with local phenomena that involve the exit times and exit probabilities from neighbourhoods of equilibria.
One usually uses the theory of large deviations to analyze the exit problem from the domain of attraction of a stable equilibrium point.
Nevertheless, in applications such as those arising in ecology, one faces even more complex
situations such as the one treated in this paper, in which large deviation techniques are not applicable,
and one needs to analyze the distributional scaling limits for the exit distributions \cite{B11}.
There have been some previous important studies for multiscale systems with fast and slow scales \cite{DS12, DSW12}. These previous papers have looked at large deviations in the related setting where one has a slow diffusion and the coefficients are fast oscillating. However, in contrast to our framework, the fast oscillations come from having periodic coefficients and inclusion of
a factor $\frac{1}{\eps}$ into the periodic component of the coefficients. The way the fast oscillations are introduced in these previous papers is similar to how it is done when one does stochastic homogenization. In the present paper, the switching comes from a discrete random process $\alpha^\eps$.

In this paper, we consider dynamical systems represented by switching diffusions, in which
the switching is rapidly varying whereas the diffusion is slowly changing.
To be more precise, let $(\Omega, \F, \{\F_t\}, \PP)$ be a filtered probability space satisfying the usual conditions.
Consider the process $(X^{\eps,\delta})_{t\geq 0}$ defined by
\begin{equation}\label{eq2.1}
dX^{\eps,\delta}(t)=f(X^{\eps,\delta}(t), \alpha^\eps(t))dt+\sqrt{\delta}\sigma(X^{\eps,\delta}(t), \alpha^\eps(t))dW(t)
, \ X^{\eps,\delta}(0)=x, \ \alpha^\eps(0)=i,\end{equation}
 where $W(t)$ is an $m$-dimensional
 standard Brownian motion,  $\alpha^\eps(t)$ is
a finite-state Markov chain that is independent of the Brownian motion and that has a
 state space $\M=\{1,..., m_0\}$ and generator $Q/\eps=\big(q_{ij}/\eps\big)_{m_0\times m_0}$,
 $X^{\eps,\delta}$ is an $\R^d$-valued process, $f: \R^d\times\M\to\R^d, \sigma: \R^d\times\M\to\R^{d\times m},$ and
 $\eps, \delta>0$ are two small parameters.
 We assume that the matrix $Q$ is irreducible. The irreducibility of $Q$ implies that the Markov chain associated
 with $Q$, which will be denoted by $(\tilde \alpha(t))_{t\geq 0}$,
  is ergodic thus has a unique stationary distribution $(\nu_1,\dots,\nu_{m_0})$.
  We denote by $X^{\eps,\delta}_{x, i}(t)$
  the solution of \eqref{eq2.1} at time $t\geq0$ when the initial value is $(x,i)$ and by $\alpha^\eps_i(t)$ the Markov chain started at $i$.

Let us explore, intuitively, what happens when $\eps$ and $\delta$
are very small.
In this setting, $\alpha^\eps(t)$ converges very fast to its stationary distribution $(\nu_1,\dots,\nu_{m_0})$ while the diffusion is asymptotically small.
As a result, on each finite time interval $[0,T]$ for $T>0$, a solution of equation \eqref{eq2.1} can be approximated by the solution $\bar X_x(t)$ to
\begin{equation}\label{eq2.2}
d\bar X(t)=\bar f(\bar X(t))dt, \  \bar X(0)=x,
\end{equation}
where $\bar f(x)=\sum_{i=1}^{m_0}f(x, i)\nu_i$.

However, if in lieu of a finite time horizon,
we look at the process on
the infinite time horizon
$[0,\infty)$,
it is not clear that $\bar X_x(t)$ is a good approximation. Suppose that equation \eqref{eq2.2} has a stable limit cycle. A natural question is whether the invariant probability measures $(\mu^{\eps,\delta})$ of the processes \eqref{eq2.1} converge weakly as $\eps\to  0$ and $\delta\to  0$, to the measure concentrated on the limit cycle. This is the main problem that we address in the current paper. In order to do this, we
substantially extend the results of \cite{CH} by considering the presence of both small diffusion and rapid switching.
Because of the presence of both the switching and the diffusion we need to develop new mathematical techniques.
In addition, even if there is no switching and we are in the SDE setting of \cite{CH},
our assumptions are weaker than those used in \cite{CH}.

The rest of the paper is organized as follows.
The main assumptions and results are given in Section \ref{sec:2}.
To provide an insight of the  proofs and to connect different parts of the arguments
 so as to provide something like a ``road map'', Section \ref{subsec:main} presents the
main ideas of proofs.
In Section \ref{sec:3}, we estimate the exit time of the solutions of \eqref{eq2.1} from neighborhoods around the stable manifolds of the equilibria of $\bar f$. The proofs of the main results are presented in Section \ref{sec:4}. In Section
\ref{sec:5}, we apply our results to a general predator-prey model. In addition to showcasing
our result in a specific setting, the proofs from Section \ref{sec:5} are interesting on their own right as they are quite technical and require the development of new tools. Finally, in Section \ref{sec:6}, we provide some numerical examples to illustrate our results from the predator-prey setting in Section \ref{sec:5}.

\subsection{Assumptions and main results}\label{sec:2}
We denote by $A'$ the transpose of a matrix $A$, by $|\cdot|$ the Euclidean norm of vectors in $\R^d$, and by $\|A\|:=\sup\{|Ax|: x\in\R^d, |x|=1\}$ the operator norm of a matrix $A\in \R^{d\times d}$. We also define $a\wedge b:=\min\{a, b\}$, $a\vee b:=\max\{a, b\}$, and the closed ball of radius $R>0$ centered at the origin $B_R:=\{x\in\R^d:|x|\leq R\}$.

We recall some definitions due to Conley \cite{C78}. Suppose we are given a flow $(\Phi_t(\cdot))_{t\in\R}$. A compact invariant set $K$ is called \textit{isolated} if there exists a neighborhood $V$ of $K$ such that $K$ is the maximal compact invariant set in $V$. A collection of nonempty sets $\{M_1,\dots,M_k\}$ is a \textit{Morse decomposition} for a compact invariant set $K$ if $M_1,\dots,M_k$ are pairwise disjoint, compact, isolated sets for the flow $\Phi$ restricted to $K$ and the following properties hold: 1) For each $x\in K$ there are integers $l=l(x)\leq m=m(x)$ for which the \textit{alpha limit set} of $x$, $\hat\alpha(x)=\bigcap_{t\leq 0} \overline{\{\Phi_s(x), s\in(-\infty,t] \}}$, satisfies $\hat\alpha(x)\subset M_l$ and the \textit{omega limit set} of $x$, $\hat\omega(x):=\bigcap_{t\geq 0} \overline{\{\Phi_s(x), s\in[t,\infty) \}}$, satisfies $\hat\omega(x)\subset M_m$ 2) If $l(x)=m(x)$ then $x\in M_l=M_m$.

\begin{asp}\label{asp1}
We impose the following assumptions for the processes modeled
by the systems \eqref{eq2.1} and \eqref{eq2.2}.
\begin{enumerate}[(i)]
  \item
  For each $i\in \M$,
  $f(\cdot, i)$ and  $\sigma(\cdot, i)$ are locally Lipschitz continuous.
  \item There is an $a>0$ and a twice continuously differentiable
   real-valued, nonnegative function $\Phi(\cdot)$ satisfying $\lim\limits_{R\to\infty}\inf\{\Phi(x): |x|\geq R\}=\infty$ and $(\nabla \Phi)'(x)f(x,i) \leq a(\Phi(x)+1)$,
   for all $(x, i)\in\R^d\times\M$.
\item The vector field $\bar f(\cdot)$ is $C^1$ and it has finitely many equilibrium points $\{x_1,\dots,x_{n_0-1}\}$  and a unique limit cycle $\Gamma$. The equilibrium points are  hyperbolic points.
\item
There exists a Morse decomposition $\{M_1, M_2,\cdots, M_{n_0}\}$ of the flow associated with $\bar f$ such that
$M_{n_0}=\Gamma$ is the limit cycle and for any $i<n_0$ we have $M_i=\{x_i\}$ where $x_i$ is an equilibrium point.
\item There exists $\eps_0>0$ such that for all $0<\eps<\eps_0$, the system \eqref{eq2.1} has a unique solution.
Furthermore, for any $0<\eps<\eps_0$, the process $(X^{\eps,\delta}(t), \alpha^{\eps}(t))$ has the strong Markov property and has an invariant probability measure $\mu^{\eps,\delta}$.
The family $(\mu^{\eps,\delta})_{0<\eps<\eps_0}$ is tight, i.e., for any $\gamma>0$ there exists an $R=R_\gamma>0$ such that $\mu^{\eps,\delta}(B_R\times\M)>1-\gamma$ for all $0<\eps<\eps_0$.
\end{enumerate}
\end{asp}

\begin{rmk}
We note that using Assumption (ii), we can work in a compact state space $\K\subset \R^n$ if the diffusion term from \eqref{eq2.1} is zero.  Assumptions (i) and (ii) are needed in order to deduce the existence and boundedness of a unique solution to equation \eqref{eq2.1} in the absence of the diffusion term. Assumption (iv) is used to make sure that there exist no heteroclinic cycles.

Assumption (v) ensures that \eqref{eq2.1} has a unique solution that is strong Markov. Sufficient conditions that imply uniqueness and the strong Markov property exist in the literature \cite{MY, YZ}.
\end{rmk}

\begin{rmk}
In \cite{CH} the author studied \eqref{eq1.1} under the assumptions that
\begin{enumerate}
  \item [(A1)] $f,\sigma\in C^2(\R^d)$.
  \item [(A2)] The system \eqref{eq1.2} has a unique limit cycle.
  \item [(A3)] There exists at most a finite number of equilibria $x^*$ of $f$. At each equilibrium the Jacobian matrix has only positive real parts and the matrix $\sigma'\sigma$ is positive definite.
  \item [(A4)] For any compact set $B$ not containing equilibria and any $u>0$ there exists $T>0$ such that if $x\in B$, then
  \[
  d(x^0_x(t),\Gamma)<u,~\text{for}~t\geq T.
  \]
  \item [(A5)] There exists $\delta_0>0$ such that for $0<\delta<\delta_0$ the stochastic differential equation has a unique ergodic measure $\mu_\delta$. Furthermore, the family $(\mu_{\delta})_{0<\delta<\delta_0}$ is tight in $\R^d$.
\end{enumerate}
Our work generalizes \cite{CH} significantly in the following aspects.
First, we work with two types of randomness - one comes from the diffusion term and the other from the switching mechanism. Second, Assumption \ref{asp1} (i) is weaker than (A1). Third, we can have any hyperbolic fixed points whereas assumptions (A3)-(A4) imply that all fixed points are sources and the deterministic system converges uniformly to the limit cycle. In addition, we do not need $\sigma'\sigma$ to be positive definite at the equilibria.
\end{rmk}

\begin{rmk}
There are several papers, which look at the exit time asymptotics near hyperbolic fixed points of small perturbations of dynamical systems \cite{K81, B08,B11}. In contrast to
these papers in which the noise is uniformly elliptic,   we have to deal with the additional complications of a stable limit cycle as well as the switching due to $\alpha^\eps$.
\end{rmk}

Let $T_\Gamma>0$ be the period of the limit cycle $\Gamma$. We can define a probability measure $\mu^0$, which is independent of the starting point $y\in \Gamma$, by
\begin{equation}\label{e:mu0}
\mu^0(\cdot)=\dfrac1{T_\Gamma}\int_0^{T_\Gamma} \1_{( \cdot)}(\bar X_y(s))ds,
\end{equation}
where $\bar X_y(t)$ is the solution to equation \eqref{eq2.2} starting at $ \bar X(0)=y$ and $\1_{\{\cdot\}}$ is the indicator function.
The measure $\mu^0\cd$ is the averaged occupation measure of the process $\bar X$ restricted to the limit cycle $\Gamma$. Throughout the paper, we assume that $\delta$ depends on $\eps$, i.e. $\delta= \delta(\eps)$, and $\lim\limits_{\eps\to  0}\delta (\eps)=0$. We will investigate the asymptotic behavior of the invariant probability measures $\mu^{\eps,\delta}$ as $\eps \to  0$ in the following three cases:
\begin{equation}\label{eq:ep-dl}
\lim\limits_{\eps\to  0}\dfrac{\delta}\eps=\left\{\begin{array}{ll}l\in(0,\infty), &\mbox{ case 1}\\0, &\mbox{ case 2}\\\infty, &\mbox{ case 3}.\end{array}\right.
\end{equation}
The multi-scale modeling approach we use is similar to the one from \cite{HY14}.
\begin{asp}\label{asp2}We
impose  additional conditions corresponding to the cases from \eqref{eq:ep-dl}.
\begin{enumerate}
\item[1)] Suppose $\lim_{\epsilon\to  0}\frac{\delta}{\eps}=l\in (0,\infty)$. For any equilibrium $x^*$ of $\bar f$ there exists $i^*\in \M$ such that $\beta'f(x^*, i^*)\ne 0$ or $\beta'\sigma(x^*, i^*)\sigma'(x^*,i^*)\beta\ne0$ where $\beta$ is a normal vector of the stable manifold of \eqref{eq2.2} at $x^*$.
\item[2)] Suppose $\lim_{\epsilon\to  0}\frac{\delta}{\eps}=0$. For any equilibrium $x^*$ of $\bar f$ there exists $i^*\in \M$ such that $\beta'f(x^*, i^*)\ne 0$ where $\beta$ is a normal vector of the stable manifold of \eqref{eq2.2} at $x^*$.
\item[3)] Suppose $\lim_{\epsilon\to  0}\frac{\delta}{\eps}=\infty$. For any equilibrium $x^*$ of $\bar f$, there exists $i^*\in \M$ such that $\beta'\sigma(x^*, i^*)\sigma'(x^*,i^*)\beta\ne 0$ where $\beta$ is a normal vector of the stable manifold of \eqref{eq2.2} at $x^*$.
\end{enumerate}
Note that when $x^*$ is a source, the stable manifold is $0$-dimensional and any non-zero vector can be considered as a normal vector.
\end{asp}

  The intuition for the conditions of Assumption \ref{asp2} is the following. In case 2, since $\delta$ tends to $0$ much faster than $\eps$,
  for sufficiently small $\delta$,
the behavior of $X^{\eps,\delta}(t)$ will be close to the process $\xi^{\eps}(t)$ defined by
\begin{equation}\label{eq2.3}
d\xi^\eps(t)=f\big(\xi^\eps(t),\alpha^\eps(t)\big)dt.
\end{equation}
We denote from now on by  $\xi^{\eps}_{x, i}(t)$ the solution of \eqref{eq2.3} at time $t\geq 0$ if the initial condition is $(x,i)$.

If for each $i\in \M$, $f(x^*, i)=0$  at a equilibrium $x^*$ of $\bar f$, the Dirac mass function at $x^*$, $\boldsymbol\delta_{x^*}$, will be an invariant
measure for $\xi^\eps(t)$. Because of this, the sequence of invariant probability measures $(\mu^{\eps,\delta})$ (or one of its subsequences) may converge to $\boldsymbol\delta_{x^*}$. In order to have the weak convergence of $(\mu^{\delta, \eps})_{\eps>0}$ to the measure $\mu^0$, we need to assume that there is an $i^*\in\M$ such that $\beta'f(x^*, i^*)\ne0$ where $\beta$ is a normal vector of the stable manifold of \eqref{eq2.2} at $x^*$. This guarantees that the process from \eqref{eq2.3} gets pushed away from the equilibrium $x^*$ and away from the stable manifold (where it could get pushed back towards the equilibrium).

In case 3, the switching is very fast compared to the diffusion term, so for small $\eps$ the process will behave like
\[
d \eta^\eps(t) = \bar f(\eta^\eps(t))dt + \sqrt{\delta}\bar\sigma(\eta^\eps(t))d\bar W(t).
\]
where
$\bar\sigma(x)=\left(\sum_{i\in\M}\sigma(x,i)\sigma^\top(x,i)\nu_i\right)^{\frac12}$  and $\bar W(t)$ is an independent $n$-dimensional Brownian motion.

 In order for the limit of $(\mu^{\eps,\delta})$ not to put mass on the equilibrium $x^*$ of $\bar f$, we need to suppose that there exists an $i^*\in\M$ such that $\beta'\sigma(x^*, i^*)\ne0$ where  $\beta$ is a normal vector of the stable manifold of \eqref{eq2.2} at $x^*$

 For case 1,  since both the switching and the diffusion are on a similar scale, we need to assume that for each equilibrium $x^*$ of $\bar f$ there is $i^*\in\M$ satisfying either $\beta'\sigma(x^*, i^*)\ne0$ or $\beta'f(x^*, i^*)\ne0$.

The next theorem is the main result of this paper.
\begin{restatable}{thm}{main}\label{t:main}
Suppose Assumptions \ref{asp1} and \ref{asp2} hold. The family of invariant probability measures $(\mu^{\eps,\delta})_{\eps>0}$ converges weakly to the measure $\mu^0$ given by \eqref{e:mu0} in the sense that for every bounded and continuous function $g:\R^d\times\M\to \R$,
$$\lim\limits_{\eps\to0}\sum_{i=1}^{m_0}\int_{\R^d} g(x, i)\mu^{\eps,\delta}(dx, i)=\dfrac1{T_\Gamma}\int_{0}^{T_\Gamma}\bar g(\bar X_y(t))dt,$$
where $T_\Gamma$ is the period of the limit cycle, $y\in\Gamma$ and $\bar g(x)=\sum_{i\in\M}g(x, i)\nu_i$.

\end{restatable}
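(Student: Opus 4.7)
The plan is to characterize every weak subsequential limit $\mu^*$ of the family $(\mu^{\eps,\delta})_{\eps>0}$ on $\R^d\times\M$ and show each such limit equals $\mu^0\otimes\nu$, i.e. $\mu^*(dx, i)=\mu^0(dx)\,\nu_i$; the stated identity then follows immediately. Tightness (Assumption \ref{asp1}(v)) combined with the finiteness of $\M$ ensures that limit points exist along subsequences $\eps_n\downarrow 0$, so it suffices to characterize them.

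First I would establish the decoupling $\mu^*(dx,i)=\mu^*_x(dx)\,\nu_i$, where $\mu^*_x$ is the $\R^d$-marginal. Denoting the generator of $(X^{\eps,\delta},\alpha^\eps)$ by $\mathcal{L}^{\eps,\delta}$, the stationarity identity $\int \mathcal{L}^{\eps,\delta}\Phi\, d\mu^{\eps,\delta}=0$ tested against $\Phi(x,i)=\psi(x)g(i)$ and multiplied by $\eps$ becomes, in the limit $\eps\to 0$, the constraint $\int \psi(x)(Qg)(i)\,\mu^*(dx,di)=0$ for every smooth $\psi$ and every $g\colon\M\to\R$. This forces the regular conditional law of $i$ given $x$ under $\mu^*$ to be the stationary distribution $\nu$ of $Q$.

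Second I would show that $\mu^*_x$ is invariant under the averaged flow \eqref{eq2.2}. Given a smooth compactly supported $\psi$, the Poisson equation $Q\chi(x,\cdot)(i)=\bar f(x)\cdot\nabla\psi(x)-f(x,i)\cdot\nabla\psi(x)$ is solvable because its right-hand side averages to zero against $\nu$; applying $\mathcal{L}^{\eps,\delta}$ to the corrector $\Phi(x,i):=\psi(x)+\eps\chi(x,i)$ produces $\mathcal{L}^{\eps,\delta}\Phi=\bar f\cdot\nabla\psi+O(\eps)+O(\delta)$, since the singular $\eps^{-1}Q$-term exactly cancels $f\cdot\nabla\psi-\bar f\cdot\nabla\psi$. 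Integrating against $\mu^{\eps,\delta}$ and sending $\eps\to 0$ gives $\int \bar f(x)\cdot\nabla\psi(x)\,\mu^*_x(dx)=0$, which is invariance of $\mu^*_x$ under the deterministic flow generated by $\bar f$.

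Third, and most delicate, I would localize $\mu^*_x$ on $\Gamma$. Invariance under $\bar f$ together with the Morse decomposition (Assumption \ref{asp1}(iv)) constrains $\mathrm{supp}(\mu^*_x)\subset\bigcup_{k=1}^{n_0}M_k=\{x_1,\dots,x_{n_0-1}\}\cup\Gamma$, since for a Morse decomposition the chain-recurrent set of the flow is the union of the Morse sets. It then remains to rule out mass at each hyperbolic or source equilibrium $x_k$. Here the exit-time estimates of Section \ref{sec:3} enter: Assumption \ref{asp2} guarantees that in each regime of \eqref{eq:ep-dl} the dominant noise mechanism---switching in case 2, diffusion in case 3, or either in case 1---has a component along a normal to the stable manifold at $x_k$, so solutions of \eqref{eq2.1} started near $x_k$ are first pushed off the stable manifold and then expelled from a fixed small neighborhood by the hyperbolic drift; combined with a Khasminskii-type invariance-of-measure argument this yields $\limsup_{\eps\to 0}\mu^{\eps,\delta}(U_k\times\M)\to 0$ as the neighborhood $U_k$ of $x_k$ shrinks. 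Hence $\mu^*_x(\{x_k\})=0$ for every $k<n_0$, so $\mu^*_x$ is supported on the periodic orbit $\Gamma$, and since the unique flow-invariant probability on $\Gamma$ is the time-average $\mu^0$, we conclude $\mu^*_x=\mu^0$. The principal obstacle is this third step: producing exit-time bounds near each saddle or source that are effective across all three scalings of \eqref{eq:ep-dl}, where the competition between hyperbolicity and the anisotropic (possibly degenerate) noise must be handled case by case.
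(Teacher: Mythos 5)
Your proposal is essentially correct and reaches the same conclusion, but it is organized differently from the paper's proof. The paper does not explicitly establish the flow-invariance of subsequential limits or the disintegration of the $\alpha$-component: the sketch in Section 1.3 simply asserts that tightness implies any weak limit is invariant for \eqref{eq2.2}, and the actual proof jumps directly to Proposition \ref{p:nbhd}, bounding $\limsup_\eps\mu^{\eps,\delta}(\cup_j N_j)\le 2^{n_0}\eta$ for neighborhoods $N_j$ of the attracting regions $\chi_j\cap B_R$, and then citing a modification of the proof of Holland's theorem for the final localization on $\Gamma$. Your first three steps, by contrast, make the disintegration $\mu^*(dx,i)=\mu^*_x(dx)\,\nu_i$ and the invariance of $\mu^*_x$ under the averaged flow explicit via the $\eps$-scaled stationarity identity and the Poisson-equation corrector $\Phi=\psi+\eps\chi$; this perturbed test function argument is a clean, self-contained alternative to the assertion the paper takes for granted, and combined with the Conley/Morse decomposition it correctly reduces the problem to ruling out atoms at the equilibria $x_k$. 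What your route buys is a more transparent logical skeleton and an explicit mechanism for step two; what the paper's route buys is the quantitative interlocking that makes step four actually close: the exit time from the region $N_1$ grows like $T_{\Delta,1}^{\eps,\delta}\sim\exp(\Delta/(\eps+\delta))$ (Lemmas \ref{lm3.2}--\ref{lm3.4} plus Lemma \ref{lm3.1}) while the re-entry probability decays like $\exp(-\kappa/(\eps+\delta))$ (Lemma \ref{lm2.2}), and one needs $\Delta<\kappa/2$ to make the product vanish in the Khasminskii occupation decomposition into the events $K_1^{\eps,\delta},\dots,K_4^{\eps,\delta}$. You correctly flag this as the principal obstacle, but the plan leaves this rate comparison implicit; fleshing it out requires reproducing essentially the content of Proposition \ref{p:nbhd}, including the fact that the exit estimates must be taken from neighborhoods of the whole stable set $\chi_k\cap B_R$ rather than just small balls around $x_k$, since a path can leave a ball around $x_k$ and be drawn back along the stable manifold.
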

\begin{rmk}\label{r:state}
	Given that the switching component $\alpha^\eps$ is state-dependent with generator $Q(x)=(q_{ij}(x))_{\M\times\M}, x\in \R^d$,
 Theorem \ref{t:main} still holds with
 $\bar f(x)=\sum_{i=1}^{m_0} f(x,i)\nu_i(x)$ and $(\nu_1(x),\dots,\nu_{m_0}(x))$ is the stationary distribution of a Markov chain with generator $Q(x)=(q_{ij}(x))$
  as long as $Q(x)$ is bounded and satisfies the  following conditions:
\begin{itemize}
	\item For all $i$ the functions $q_{ii}(\cdot)$ and $\frac{q_{ij}(\cdot)}{q_{ii}(x\cdot)}$ are Lipschitz continuous.
     \item If $q_{ij}(x)>0$ for some $x\in\R^d$ then $\inf_{x\in\R^d} \frac{q_{ij}(x)}{|q_{ii}(x)|}>0$.
     \item For all $i$ we have $\inf_{x\in\R^d}|q_{ii}(x)|>0$.
     \item $\inf_{x\in\R^d} \hat q^{(m_0)}_{ij}(x)>0$ where $\hat Q(x)=(0\vee q_{ij}(x))_{\M\times\M}$, and $(\hat q^{(m_0)}_{ij}(x))$ is the $m_0$-power of $\hat Q(x)$
\end{itemize}
  We explain how one can do this in Remark \ref{r:dens}.
\end{rmk}

\subsection{An application of Theorem \ref{t:main}} We will exhibit an example where the result of Theorem \ref{t:main} applies. Recently there has been renewed
interest in stochastic population dynamics \cite{HN16, BL16, B18, HN18}.
Suppose we have a predator-prey system of the form
\begin{equation}\label{ex00}
\left\{\begin{array}{lll}
\disp {d \over dt}{x}(t)=x(t)\left[ a- bx(t)-y(t)h(x(t), y(t))\right]\\
\disp {d \over dt}
y(t)=y(t)\left[- c- d y(t)+x(t)fh(x(t), y(t))\right]
.\end{array}\right.
\end{equation}
Here $x(t), y(t)$ denote the densities of prey and predator
at time $t\geq 0$, respectively; $a, b, c, d, f>0$ describe the per-capita birth/death and competition rates, and $xh(x,y), yh(x,y)$ are the functional responses of the predator and the prey. For instance, if $h(x, y)$ is constant, the model is the classical Lotka-Volterra one \cite{L25, V28, GH79}.
If $$h(x, y)=\dfrac{m_1}{m_2+m_3x+m_4y},$$ the functional response is of Beddington-DeAngelis type \cite{CC01}.
The setting of \eqref{ex00} is very general and encompasses many of the models used in the ecological literature.

We explore what happens in the fast-switching slow-noise limit for the following noisy extension of \eqref{ex00}
 \begin{equation}\label{ex1}
\left\{\begin{array}{lll}d{X^{\eps,\delta}}(t)=X^{\eps,\delta}(t)\varphi\big(X^{\eps,\delta}(t), Y^{\eps,\delta}(t), \alpha^\eps(t))dt+\sqrt{\delta}\lambda(\alpha^\eps(t))X^{\eps,\delta}(t)dW_1(t)\\
d{Y^{\eps,\delta}}(t)=Y^{\eps,\delta}(t)\psi\big(X^{\eps,\delta}(t), Y^{\eps,\delta}(t), \alpha^\eps(t))dt+\sqrt{\delta}\rho(\alpha^\eps(t))Y^{\eps,\delta}(t)dW_2(t).\end{array}\right.
\end{equation}
Here
\bea \ad \varphi(x, y, i)=a(i)-b(i)x-yh(x, y, i) \ \hbox{
and }\\
\ad \psi(x, y, i)=-c(i)-d(i)y+f(i)xh(x, y, i), \eea
where
$a(\cdot), b(\cdot), c(\cdot), d(\cdot), f(\cdot), \lambda(\cdot), \rho(\cdot)$ are positive functions defined on $\M$,
 $\delta=\delta(\eps)$ depends on $\eps$, $\lim\limits_{\eps\to0}\delta=0$,
$W_1(t)$ and $W_2(t)$ are independent Brownian motions, and
$\alpha^\eps$ is an independent Markov chain with generator $Q/\eps$. As before, the generator $Q$ is assumed to be irreducible so that the Markov chain has a unique stationary distribution given by $(\nu_1,\dots,\nu_{n_0})$.
The function $h(\cdot, \cdot, \cdot)$ is assumed to be positive, bounded, and continuous on $\R^2_+\times\M$.

 For $g(\cdot)= a(\cdot), b(\cdot), c(\cdot), d(\cdot), f(\cdot), \varphi(\cdot), \psi(\cdot)$, define the \textit{averaged quantities} $\bar g:=\sum g(i)\nu_i$, $g_m=\min\{g(i):i\in\M\}, g_M=\max\{g(i):i\in\M\}$. Set
$h_1(x, y):=\sum h(x, y, i)\nu_i$ and $h_2(x, y):=\sum f(i)h(x, y, i)\nu_i.$
The existence and uniqueness of a global positive solution to \eqref{ex1} can be proved in the same manner as in \cite{JJ} or \cite{CDN} and is therefore omitted.
We denote by $Z^{\eps,\delta}_{z, i}(t)=(X^{\eps, \delta}_{z, i}(t), Y^{\eps, \delta}_{z, i}(t))$ the solution to \eqref{ex1} with initial value $\alpha^{\eps}(0)=i\in\M, Z^{\eps,\delta}_{z, i}(0)=z\in\R^{2}_+.$
Consider the averaged equation
\begin{equation}\label{ex2}
\left\{\begin{array}{lll}
\disp {d \over dt}{X}(t)=X(t)\bar\varphi(X(t), Y(t))
=X(t)\left[\bar a-\bar bX(t)-Y(t)h_1(X(t), Y(t))\right]\\
\disp {d \over dt}
Y(t)=Y(t)\bar\psi(X(t), Y(t)))=Y(t)\left[-\bar c-\bar d Y(t)+X(t)h_2(X(t), Y(t))\right]
.\end{array}\right.
\end{equation}
We denote by $\bar Z_z(t)=(\bar X_z(t), \bar Y_z(t))$, the solution to \eqref{ex2} with initial value $\bar Z_z(0)=z$.
\begin{asp}\label{asp5.1}
\begin{enumerate}[(i)]$\text{}$
  \item The system \eqref{ex2} has a finite number of positive
  equilibria and a unique stable limit cycle $\Gamma$. In addition, any positive solution not starting at an equilibrium converges to the stable limit cycle.
  \item The inequality $$\frac{\bar a}{\bar b}h_2\left(\frac{\bar a}{\bar b}, 0\right)>\bar c$$ is satisfied.
\end{enumerate}\end{asp}

\begin{rmk}
  Note that the Jacobian of
$\Big(x\bar\phi(x,y), y\bar\psi(x,y)\Big)^\top$
at $\left(\frac{\bar a}{\bar b}, 0\right)$
has two eigenvalues: $-\bar c+\frac{\bar a}{\bar b}h_2(\frac{\bar a}{\bar b}, 0)$
and $-\frac{\bar b^2}{\bar a}<0$.
If $-\bar c+\frac{\bar a}{\bar b}h_2(\frac{\bar a}{\bar b}, 0)<0$,
then $\left(\frac{\bar a}{\bar b}, 0\right)$ is a stable equilibrium of \eqref{ex2}, which violates condition (i) of Assumption \ref{asp5.1}. This shows that condition (ii) is often contained in condition (i).

We note that the model \eqref{ex2} is quite general and as such conditions on the parameters for the existence and uniqueness of a limit cycle are in general complicated.
\end{rmk}

We can apply Theorem \ref{t:main} to this model if we can verify part (v) of Assumption \ref{asp1} since the other conditions are clearly satisfied.
Since the process $\alpha^\eps(t)$ is ergodic and the diffusion is nondegenerate, an invariant probability measure of the solution $Z^{\eps,\delta}(t)$ is unique if it exists.
It is unlikely that one could find a Lyapunov-type function satisfying the hypothesis of \cite[Theorem 3.26]{YZ} in order to prove the existence of an invariant probability measure.
In addition, the tightness of the family of invariant probability measures $(\mu^{\eps,\delta})_{\eps>0}$ cannot be proved using the methods from \cite{DDT, DDY}.

These difficulties can be overcome
with the help of a new technical tool. We partition the domain $(0,\infty)^2$ into several parts and then construct a truncated Lyapunov-type function. We then estimate the average probability that the solution belongs to a specific part of our partition.
This then allows us to prove that the family of invariant probability measures $(\mu^{\eps,\delta})_{\eps>0}$ is tight on the interior of $\R_+^2$, i.e. for any $\eta>0$, there are  $0<\eps_0,\delta_0<1< L$ such that
for all $\eps<\eps_0,\delta<\delta_0$, the unique invariant probability measure $\mu^{\eps,\delta}$ of $(Z^{\eps,\delta}(t), \alpha^\eps(t))$ satisfies
$$\mu^{\eps,\delta}([ L^{-1}, L]^2)\geq 1-\eta.$$
We are able to prove the following result.

\begin{restatable}{thm}{mainnex}\label{thm5.1}
Suppose Assumption {\rm\ref{asp5.1}} holds. For
sufficiently small $\delta$ and $\eps$, the process given by \eqref{ex1} has a unique invariant probability measure $\mu^{\eps,\delta}$ with support in $\inte\R_+^{2}$ $($where $\inte\R_+^{2}$ denotes the interior of $\R^2_+)$. In addition:
\begin{enumerate}
  \item[a)] If $\lim\limits_{\eps\to0}\dfrac\delta\eps=l\in(0,\infty]$, the family of invariant probability measures $(\mu^{\eps,\delta})_{\eps>0}$ converges weakly to $\mu^0$, the occupation measure of the limit cycle of \eqref{ex2}, as $\eps\to0$ (in the sense
 of Theorem \ref{t:main}).
  \item[b)] If $\lim\limits_{\eps\to0}\dfrac\delta\eps=0$ and at each equilibrium $(x^*, y^*)$ of $(\bar\varphi(x, y), \bar\psi(x, y))$, there is $i^*\in\M$ such that either $\varphi(x^*, y^*, i^*)\ne0$ or $\psi(x^*, y^*, i^*)\ne0$, then the family of invariant probability measures $(\mu^{\eps,\delta})_{\eps>0}$ converges weakly to $\mu^0$, the occupation measure of the limit cycle of \eqref{ex2}, as $\eps\to0$.
\end{enumerate}
\end{restatable}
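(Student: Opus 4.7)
The strategy is to verify the hypotheses of Theorem \ref{t:main} in the predator-prey setting and then invoke it directly. Since Assumption \ref{asp1}(i)--(iii) follow immediately from the smoothness of $\varphi,\psi,\sigma$ and Assumption \ref{asp5.1}(i), and a suitable Morse decomposition $\{M_1,\dots,M_{n_0-1},\Gamma\}$ exists because the critical points are isolated and all trajectories (other than equilibria) converge to $\Gamma$, the bulk of the work lies in (v): existence, uniqueness, and tightness of $\mu^{\eps,\delta}$ on $\R^{2,\circ}_+$.

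Existence and tightness are the main obstacle, because the process lives on the open orthant and one must simultaneously rule out escape to infinity \emph{and} concentration on either axis. I plan to construct a truncated Lyapunov function $V:\R^{2,\circ}_+\to[0,\infty)$ that blows up in each of the four regimes $x\to 0$, $y\to 0$, $x\to\infty$, $y\to\infty$, and to show $\op V(x,y,i)\leq C_1-C_2 V(x,y)$ outside a compact subset of $\R^{2,\circ}_+$, uniformly in $\eps,\delta$ small. Control at infinity is standard via the self-regulating terms $-b(i)x$ and $-d(i)y$; a test function involving $x^p+y^p$ suffices. Control as $x\to 0$ uses $\varphi(x,y,i)\to a(i)-yh(0,y,i)>0$ uniformly on bounded $y$, so a $-\log x$ contribution works. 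The delicate part is $y\to 0$: this is where Assumption \ref{asp5.1}(ii) enters, guaranteeing that when the prey component sits near its carrying capacity $\bar a/\bar b$ the predator has a strictly positive averaged per-capita growth rate $-\bar c+(\bar a/\bar b) h_2(\bar a/\bar b, 0)>0$. Following the partition method sketched in the introduction, I would split $(0,\infty)^2$ into a region where $x$ is near $\bar a/\bar b$ and a complementary region, and use averaging against the fast chain $\alpha^\eps$ to show that in the first region $-\log y$ is a supermartingale on average, while in the second region the prey dynamics push $x$ back toward $\bar a/\bar b$. Standard Has'minskii-type arguments then yield tightness of $(\mu^{\eps,\delta})$ on compact subsets of $\R^{2,\circ}_+$ uniformly in $\eps$.

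For uniqueness and the strong Markov property, the diffusion coefficient $\diag(\lambda(i)x,\rho(i)y)$ is nondegenerate on $\R^{2,\circ}_+$ for each $i\in\M$, and $\alpha^\eps$ is irreducible, so the joint process $(Z^{\eps,\delta},\alpha^\eps)$ is strong Feller and irreducible on $\R^{2,\circ}_+\times\M$; uniqueness of $\mu^{\eps,\delta}$ then follows by classical arguments (see the references given in Remark after Assumption \ref{asp1}). With (v) in hand, the tightness forces $\mu^{\eps,\delta}$ to have support bounded away from the axes and infinity, so the limit cycle $\Gamma\subset\R^{2,\circ}_+$ is the relevant attractor.

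Finally I verify Assumption \ref{asp2} case by case. For part (a) with $l\in(0,\infty]$ I only need the nondegeneracy condition on $\sigma$: at any interior equilibrium $(x^*,y^*)$ of $\bar f$ we have $\sigma(x^*,y^*,i)=\diag(\lambda(i)x^*,\rho(i)y^*)$ with $x^*,y^*,\lambda(i),\rho(i)>0$, hence $\beta'\sigma(x^*,y^*,i)\neq 0$ for every $i\in\M$ and every nonzero normal vector $\beta$ to the stable manifold; this handles cases 1 and 3 of Assumption \ref{asp2} simultaneously. For part (b) with $l=0$ (case 2), the hypothesis that for some $i^*$ either $\varphi(x^*,y^*,i^*)\neq 0$ or $\psi(x^*,y^*,i^*)\neq 0$ gives $f(x^*,y^*,i^*)=(x^*\varphi,y^*\psi)\neq 0$; since $\bar f(x^*,y^*)=\sum_i\nu_i f(x^*,y^*,i)=0$, at least two values of $i$ must contribute nonzero vectors of opposite sign along $\beta$, so after a brief case analysis distinguishing sources (where the stable manifold is trivial and $\beta$ is arbitrary) from hyperbolic saddles (where one can choose $i^*$ so that $f(x^*,y^*,i^*)$ is transverse to the one-dimensional stable manifold), one obtains $\beta' f(x^*,i^*)\neq 0$ for some $i^*$. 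Applying Theorem \ref{t:main} then gives the claimed weak convergence $\mu^{\eps,\delta}\Rightarrow\mu^0$ in both parts. I expect the construction of the global Lyapunov function preventing predator extinction to be the most delicate step, since it must interact correctly with the fast switching in $\alpha^\eps$.
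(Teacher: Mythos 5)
Your reduction to Theorem \ref{t:main} and your verification of Assumption \ref{asp2} for part (a) (nondegeneracy of $\diag(\lambda(i)x^*,\rho(i)y^*)$ at interior equilibria) match the paper, and you correctly identify condition (v) of Assumption \ref{asp1} as the crux. The gap is in how you propose to get it. You assert a pointwise Foster--Lyapunov inequality $\op V(x,y,i)\leq C_1-C_2V(x,y)$ outside a compact subset of $\R^{2,\circ}_+$, uniformly in small $\eps,\delta$, followed by ``standard Has'minskii-type arguments.'' This is exactly what the paper states is unlikely to be available, and no construction you sketch would produce it: for a fixed switching state $i$ the predator's per-capita drift $\psi(x,y,i)$ near $y=0$, $x\approx\bar a/\bar b$ may well be negative (Assumption \ref{asp5.1}(ii) only controls the $\nu$-average), so an infinitesimal drift bound must either hold for the worst $i$ (false in general) or use an $\eps$-dependent, $i$-dependent corrector solving a Poisson equation for $Q$, which you do not construct. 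Worse, even after averaging over the chain, the natural weighted logarithmic function $V=-\tfrac{2\bar c}{\bar a}\ln x-\ln y+C$ has averaged drift $-\bar\Upsilon(x,y)$ which need not be negative pointwise along the boundary $y=0$ at intermediate values of $x$; positivity of $\bar\Upsilon$ is only guaranteed near the two boundary equilibria, and what is true is that its \emph{time average along the averaged flow over a fixed horizon $T$} is bounded below by $\gamma_0>0$ (the paper's Lemma \ref{lm5.2}, with its three-case analysis of the time to reach $x\geq\beta_1$). The paper then transfers this to the stochastic system via the finite-horizon approximation Lemma \ref{lm2.2} and the ergodicity estimate for $\alpha^\eps$, uses the moment bounds of Lemma \ref{lm5.1} to control excursions, and works with the \emph{truncated} function $U=V\vee L_1$ evaluated at times $nT$ to obtain tightness of occupation measures on a compact subset of $\R^{2,\circ}_+$ (Lemmas \ref{lm5.3}--\ref{lm5.4}); existence of $\mu^{\eps,\delta}$ then follows from this occupation-measure tightness (Bogolyubov--Krylov type results), not from a drift condition. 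Your ``supermartingale on average'' phrase gestures at this, but the step from a time-averaged statement back to an infinitesimal drift inequality is precisely the missing (and in general unavailable) ingredient.

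A secondary flaw is in your verification of Assumption \ref{asp2} for part (b): from $f(x^*,y^*,i^*)\neq 0$ and $\sum_i\nu_i f(x^*,y^*,i)=0$ you cannot conclude that some $f(x^*,y^*,i)$ has nonzero component along $\beta$ --- all the vectors $f(x^*,y^*,i)$ could be tangent to the stable manifold, so the claimed transversality at a hyperbolic saddle does not follow. The statement is rescued differently: Assumption \ref{asp5.1}(i) forces every interior equilibrium to have trivial stable set (otherwise points on a stable manifold would not converge to $\Gamma$), so $\beta$ may be taken arbitrary and $f(x^*,y^*,i^*)\neq0$ suffices, while the boundary saddles $(0,0)$ and $(\bar a/\bar b,0)$ are irrelevant because the tightness just established keeps $\mu^{\eps,\delta}$ on $[L^{-1},L]^2$, away from their stable sets (the axes). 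You should argue along these lines rather than via the sign/transversality claim.
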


\begin{rmk}
We note that on any finite time interval $[0,T]$ the solutions to \eqref{ex1} converge to the solutions of \eqref{ex2}. However, in ecology, people are interested in the long term behavior of ecosystems as $T\to \infty$. Therefore, the above result shows rigorously that \eqref{ex2} gives the correct long-term behavior.
\end{rmk}

\subsection{Main ideas
of proof of Theorem \ref{t:main}: A road map}\label{subsec:main}
Because some parts of the proofs are very technical, in order to offer some insight,
we present the main ideas in this subsection.
It aims to provide something like a ``road map'' for the proofs.

Condition (v) of Assumption \ref{asp1} is a tightness assumption for the family of invariant probability measures $(\mu^{\eps,\delta})_{0<\eps<\eps_0}$. This implies that any weak limit of  $(\mu^{\eps,\delta})_{0<\eps<\eps_0}$ is an invariant probability measure of the limit system \eqref{eq2.2}. The main technical issue is to show that any subsequential limit of $(\mu^{\eps,\delta})_{0<\eps<\eps_0}$ does not assign any mass to any of the fixed points of $\bar f$. This is done by a careful analysis of the nature of the deterministic and stochastic systems near the attracting region  $\chi_l:=\{y: \lim_{t\to\infty}\bar X_y(t)=x_l\}$, of an equilibrium $x_l$ of $\bar f$. Note that if $x_l$ is a source then $\chi_l=\{x_l\}$ while if $x_l$ is hyperbolic $\chi_l$ can be an unbounded set. This makes the problem hard.

In Section \ref{sec:3}, using large deviation techniques, we establish the following uniform estimate for the probability that the processes $X_{x,i}^{\eps,\delta}$ and $\bar X_x$ are close on a fixed time interval: For any $R$, $T$,
and $\gamma>0$, there is a $\kappa=\kappa(R,\gamma, T)>0$ such that
\begin{equation}\label{sk1}
\PP\left\{\left|X^{\eps,\delta}_{x,i}(t)-\bar X_x(t)\right|\geq\gamma ~\text{for some}~t\in [0,T] \right\}<\exp\left(-\frac{\kappa}{\eps+\delta}\right),  x\in B_R.
\end{equation}

The main task is to estimate the time of exiting the attracting region, $\chi_l\cap B_R$, of an equilibrium $x_l$. To be precise, we  show that $X_{x,i}^{\eps,\delta}$ leaves small neighborhoods of $\chi_l\cap B_R$ with strictly positive probability in finite time if we start close to  $\chi_l\cap B_R$. We find uniform lower bounds for these probabilities.

 In fact, for any
 sufficiently small  $\Delta>0$
 and sufficiently large
 $R>0$
 to include all the sets $M_i, i=1,\dots,n_0$, we can find $\theta_1,\theta_3>0$, $H^\Delta_l>0$, and $ \eps_{l}(\Delta)$ such that for $\eps<\eps_{l}(\Delta)$,
\begin{equation}\label{sk2}
 \PP\left\{\wdt \tau^{\eps,\delta}_{x, i}\leq H^\Delta_l\right\}\geq\psi^{\Delta,\eps}:=\exp\left(-\dfrac{\Delta}{\eps+\delta}\right),\,|x-x_l|<\theta_1,
\end{equation}
where
$$\wdt \tau^{\eps,\delta}_{x, i}:=\inf\{t\geq 0: X^{\eps,\delta}_{x,i}(t)\in B_R \text{ and } \dist(X^{\eps,\delta}_{x,i}(t),\chi_l)\geq \theta_3\}.$$
We prove the estimate \eqref{sk2} in the different cases as follows:
\begin{enumerate}
\item [1)] Suppose that there is an $i^*\in\M$ satisfying
$\beta'f(x_l,i^*)\neq 0$, where $\beta$ is a normal unit vector of the
stable manifold of \eqref{eq2.2} at $x_l$. Then we estimate the time $\alpha^\eps(t)$ stays in $i^*$ and consider the diffusion in this fixed state, that is
\[
dZ^\delta(t)=f(Z^\delta(t), i^*)dt+\sqrt{\delta}\sigma(Z^\delta(t), i^*)dW(t).
\]
      Since the drift $f(x,i^*)$ is nonzero and pushes us away from the stable manifold of $x^*$, and the diffusion term is small, we can estimate the exit time $\wdt\tau^{\eps,\delta}_{x, i}$.
\item [2)] Suppose $\lim_{\eps\to0}\frac{\delta}{\eps}\in(0,\infty]$ and there is an $i^*$ such that $\beta'\sigma(x_l,i^*)\neq 0$. If $\lim_{\eps\to0}\frac{\delta}{\eps}<\infty$, suppose in addition that $\beta'f(x_l,i)=0, i\in \M$. We estimate the time $\alpha^\eps(t)$ to be
in $i^*$ and consider the diffusion component in the direction $\beta$ in this fixed state
\[
dZ^{\eps,\delta} = \sqrt{\delta}\beta'\sigma(Z^{\eps,\delta},\alpha^\eps)dW(t)
\]
Since the diffusion coefficient does not vanish close to $x_l$, we can do time change so that we get a Brownian motion. Then we can estimate the probability that the exit time exceeds a given number. Ultimately, we show that $Z^{\eps,\delta}$ and $\beta'X^{\eps,\delta}$ are close to each other.
\end{enumerate}
Comparing the rates in \eqref{sk1} with \eqref{sk2} is key to prove the main result  in Section \ref{sec:4}  (see e.g. \cite{CH, kifer12}). The idea is to estimate the time of exiting the attracting region, $\chi_l\cap B_R$, of an equilibrium $x_l$ as well as the time of coming back to this region. Then we prove that eventually, the probability of entering $\chi_l\cap B_R$ is very small compared to the probability of exiting the region.

If we start with $\bar X(0)$ close to $\chi_l\cap B_R$ then after a finite time $\bar X$ will be close to one of the equilibrium points or the limit cycle. Using this together with \eqref{sk1} and \eqref{sk2} we get that there exist neighborhoods $N_1, G_1$ of $\chi_l\cap B_R$ with $N_1\subset G_1$ such that
\[
\PP\{\tau^{\eps,\delta}_{x,i}<L\}>\dfrac1{8}\psi^{\Delta,\eps}, x\in N_1
\]
for some constant $L>0$ and
$$\tau^{\eps,\delta}_{x,i}=\inf\{t\geq0: X_{x,i}^{\eps,\delta}(t)\in B_R\setminus G_1\}.$$
This can be leveraged into showing that with high probability, if we start in $N_1$, we will leave the region $G_1\supset N_1$ in a finite, uniformly bounded, time:
\begin{equation}\label{sk3}
\PP\left\{\tau^{\eps,\delta}_{x,i}<T^{\eps,\delta}_{\Delta,1}\right\}>\frac12, x\in N_1
\end{equation}
where $T^{\eps,\delta}_{\Delta,1}:=C\exp\left(\dfrac{\Delta}{\eps+\delta}\right)$.
Using \eqref{sk1} we can find a constant $\hat T>0$, independent of $\eps$ such that
\begin{equation}\label{sk4}
\PP\left\{X^{\eps, \delta}_{x, i}(\hat T)\notin G_1 \right\}\geq1-\exp\Big(-\dfrac{\kappa}{\eps+\delta}\Big), x\in B_R\setminus N_1
\end{equation}
and that
\begin{equation}\label{sk5}
\PP\left\{X^{\eps, \delta}_{x, i}(t)\notin N_1, \ ~\text{for all}~ t\in[0,\hat T]\right\}\geq1-\exp\Big(-\dfrac{\kappa}{\eps+\delta}\Big), x\in B_R\setminus G_1.
\end{equation}

Note that $T^{\eps,\delta}_{\Delta,1}\to\infty$ as $\eps\to 0$. However, if we pick $\Delta<\kappa/2$, we have
\begin{equation}\label{sk6}
\lim_{\eps\to 0}T_{\Delta,1}^{\eps,\delta}\exp\left(-\dfrac{\kappa}{\eps+\delta}\right)= \lim_{\eps\to 0}\exp\left(\dfrac{\Delta}{\eps+\delta}\right)\exp\left(-\dfrac{\kappa}{\eps+\delta}\right)= 0.
\end{equation}
The estimate \eqref{sk6} shows the exit time is not long compared to the good rate of large deviations,
which will be used to show that invariant probability measures cannot put much mass on the equilibria.
Let $\widetilde X^{\eps, \delta}(t)$ be the stationary solution, whose distribution is $\mu^{\eps,\delta}$ for every time $t\geq 0$. Let  $\tau^{\eps,\delta}$
 be the first exit time of $\widetilde X^{\eps, \delta}(t)$ from $G_1$.
We can show that for any $\eta>0$ we can find $R>0$ such that $\mu^{\eps,\delta}(N_1)\leq 2\eta$ by using \eqref{sk4}, \eqref{sk5}, and \eqref{sk6} to find the probabilities of the events
\begin{align*}
K_1^{\eps, \delta}&=\Big\{\widetilde X^{\eps, \delta}(T_{\Delta,1}^{\eps,\delta})\in N_1, \tau^{\eps,\delta}\geq T_{\Delta,1}^{\eps,\delta}, \widetilde X^{\eps,\delta}(0)\in N_1\Big\}\\
K_2^{\eps, \delta}&=\Big\{\widetilde X^{\eps, \delta}(T_{\Delta,1}^{\eps,\delta})\in N_1, \tau^{\eps,\delta}< T_{\Delta,1}^{\eps,\delta}, \widetilde X^{\eps,\delta}(0)\in N_1\Big\}\\
K_3^{\eps, \delta}&=\Big\{\widetilde X^{\eps, \delta}(T_{\Delta,1}^{\eps,\delta})\in N_1, \widetilde X^{\eps,\delta}(0)\in B_R\setminus N_1\Big\}\\
K_4^{\eps, \delta}&=\Big\{\widetilde X^{\eps, \delta}(T_{\Delta,1}^{\eps,\delta})\in N_1, \widetilde X^{\eps,\delta}(0)\notin B_R\Big\}.
\end{align*}

Similar arguments show that for any $\eta>0$, we can find $R>0$ and neighborhoods $N_1,\dots, N_{n_0-1}$ of $\chi_1\cap B_R,\dots, \chi_{n_0-1}\cap B_R$ such that
$$\limsup_{\eps\to0}  \mu^{\eps,\delta}(\cup_{j=1}^{n_0-1}N_j)\leq 2^{n_0}\eta.$$
Using this fact together with Assumption \ref{asp1} and Lemma \ref{lm2.2} we can establish, by a straightforward modification of the proof of \cite[Theorem 1]{CH}, that for any $\eta>0$ there is neighborhood $N$ of the limit cycle $\Gamma$ such that
$$\liminf_{\eps\to  0} \mu^{\eps,\delta} (N)>1-2^{n_0}\eta.$$

\section{Estimates for the first exit times}\label{sec:3}
Define for any $i=1,\dots,n_0$ and $\theta>0$, the sets
$\chi_i:=\{y: \lim_{t\to\infty}\dist(\bar X_y(t),M_i)=0\}$ and
$M_{i,\theta}:=\{y: \dist(y, M_i)<\theta\}$.
Let $R_0>1$ be large enough such that $B_{R_0-1}$ contains all $M_i$, $i=1,\dots, n_0$.
Fix $\theta_0\in(0,1)$ such that $\{M_{i,2\theta_0}, i=1,\dots, n_0\}$ are mutually disjoint
and $M_{i,2\theta_0}\cap \chi_j=\emptyset$ for  $j<i$. For any $\eta>0$, let $R=R_\eta>0$ such that $\mu^{\eps,\delta}(B_R)>1-\eta$ and $R<R_0$.

The following is a well-known exponential martingale inequality (see \cite[Theorem 1.7.4]{XM}).
\begin{lm}\label{l:exp}(Exponential martingale inequality)
Suppose $(g(t))$ is a real-valued $\F_t$-adapted process and $\int_0^Tg^2(t)dt<\infty$ almost surely. Then for any $a,b>0$ one has
$$\PP\left\{\sup_{t\in[0, T]}\left[\int_0^tg(s)dW(s)-\dfrac{a}{2}\int_0^tg^2(s)ds\right]>b\right\}\leq e^{-ab}.$$
\end{lm}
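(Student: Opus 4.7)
The plan is to realize the left-hand probability as a level-crossing probability for a nonnegative exponential supermartingale and then invoke Doob's maximal inequality. Concretely, I would introduce the stochastic exponential
$$Z(t):=\exp\!\left(a\int_0^t g(s)\,dW(s)-\frac{a^2}{2}\int_0^t g^2(s)\,ds\right),\qquad t\in[0,T].$$
By It\^o's formula one checks $dZ(t)=a\,Z(t)\,g(t)\,dW(t)$ with $Z(0)=1$, so $Z$ is a nonnegative local martingale. Localizing via the stopping times $\tau_n:=\inf\{t\ge 0:\int_0^t g^2(s)\,ds\ge n\}\wedge T$ and applying Fatou's lemma to the bounded stopped martingales $Z(\cdot\wedge \tau_n)$ shows that $Z$ is a supermartingale on $[0,T]$ with $\E Z(t)\le 1$.

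Next I would rewrite the event of interest in terms of $Z$. Since $u\mapsto e^{au}$ is strictly increasing, multiplying the bracketed expression by $a$ and exponentiating gives the equivalence
$$\sup_{t\in[0,T]}\!\left[\int_0^t g(s)\,dW(s)-\frac{a}{2}\int_0^t g^2(s)\,ds\right]>b\iff\sup_{t\in[0,T]}Z(t)>e^{ab},$$
so the two events have the same probability.

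Finally I would invoke the maximal inequality for nonnegative c\`adl\`ag supermartingales --- a direct consequence of optional sampling applied to the stopping time $\sigma:=\inf\{t\in[0,T]:Z(t)>e^{ab}\}\wedge T$ --- to obtain
$$\PP\!\left\{\sup_{t\in[0,T]}Z(t)>e^{ab}\right\}\le\frac{\E Z(0)}{e^{ab}}=e^{-ab},$$
which together with the equivalence above yields the claim. There is no genuine obstacle here: the only subtle point is the passage from local martingale to supermartingale via localization and Fatou, which is entirely standard; the rest is bookkeeping. This is presumably why the authors merely quote the statement from Mao's monograph.
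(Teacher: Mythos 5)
The paper supplies no proof of this lemma --- it is quoted verbatim from \cite[Theorem 1.7.4]{XM} --- and your argument reconstructs exactly the standard one given there: form the stochastic exponential $Z$, observe it is a nonnegative supermartingale with $\E Z(t)\le 1$, rewrite the maximal event for the bracketed process as a level-crossing event for $Z$ (exponentiation by the increasing map $u\mapsto e^{au}$ commutes with the supremum), and apply Doob's maximal inequality for nonnegative supermartingales. One minor imprecision: with your choice $\tau_n=\inf\{t:\int_0^t g^2(s)\,ds\ge n\}\wedge T$, the stopped process $Z(\cdot\wedge\tau_n)$ need not be \emph{bounded}, since capping $\int_0^{\tau_n}g^2\,ds$ bounds the quadratic variation of the Brownian integral but not its running maximum. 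Nevertheless $Z^{\tau_n}$ is a true martingale by Novikov's criterion (as $\E\exp\bigl(\tfrac{a^2}{2}\int_0^{\tau_n}g^2\,ds\bigr)\le e^{a^2 n/2}<\infty$), which is all that the Fatou step requires; alternatively, one may simply invoke the standard fact that any nonnegative local martingale with integrable initial value is a supermartingale, using an arbitrary localizing sequence rather than a hand-picked one. Everything else is sound.
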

We will make use of this lemma repeatedly in the proofs to follow. The next result gives us estimates on how close the solutions to \eqref{eq2.1} and \eqref{eq2.3} are on a finite time interval if they have the same starting points.
The argument of the
proof is pretty standard. For completeness, it  relegated
to Appendix \ref{a:1}.

\begin{lm}\label{lm2.2}
For any $R$, $T$, and $\gamma>0$, there is a $\kappa=\kappa(R,\gamma, T)>0$ such that
$$\PP\left\{\left|X^{\eps,\delta}_{x,i}(t)-\bar X_x(t)\right|\geq\gamma ~\text{for some}~t\in [0,T] \right\}<\exp\left(-\frac{\kappa}{\eps+\delta}\right),  x\in B_R.$$
\end{lm}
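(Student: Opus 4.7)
My strategy is the usual three–term decomposition
\[
X^{\eps,\delta}_{x,i}(t)-\bar X_x(t)=I_1(t)+I_2(t)+I_3(t),
\]
where $I_1(t)=\int_0^t[\bar f(X^{\eps,\delta}_{x,i}(s))-\bar f(\bar X_x(s))]ds$ is a Lipschitz ``self'' term, $I_2(t)=\int_0^t[f(X^{\eps,\delta}_{x,i}(s),\alpha^\eps(s))-\bar f(X^{\eps,\delta}_{x,i}(s))]ds$ is the averaging fluctuation driven by $\alpha^\eps$, and $I_3(t)=\sqrt{\delta}\int_0^t\sigma(X^{\eps,\delta}_{x,i}(s),\alpha^\eps(s))dW(s)$ is the diffusion term. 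First, I would localize: using Assumption \ref{asp1}(ii), apply It\^o to $\Phi(X^{\eps,\delta}_{x,i}(t))e^{-at}$ and combine it with the exponential martingale inequality (Lemma \ref{l:exp}) to produce, for every $T,R$, a larger radius $R'=R'(R,T,\gamma)$ and an event $\Omega_1^{\eps,\delta}$ with $\PP(\Omega_1^{\eps,\delta})\geq 1-\exp(-\kappa_1/\delta)$ on which $X^{\eps,\delta}_{x,i}([0,T])\subset B_{R'}$ whenever $x\in B_R$. On $B_{R'}$, by (i) the functions $f(\cdot,j),\sigma(\cdot,j)$ are bounded and Lipschitz with some constant $L$.

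On $\Omega_1^{\eps,\delta}$, the term $I_3$ is handled directly by Lemma \ref{l:exp}: applying it to the scalar martingale $\int_0^t e_k'\sigma(X^{\eps,\delta}_{x,i}(s),\alpha^\eps(s))dW(s)$ coordinate-wise with $a=1/\delta$ and $b=\gamma/(6\sqrt{\delta})$ yields
\[
\PP\!\left\{\sup_{t\in[0,T]}|I_3(t)|\geq \gamma/6\right\}\leq \exp(-\kappa_2/\delta)
\]
for a constant $\kappa_2=\kappa_2(R',T,\gamma)>0$. For the averaging term $I_2$, the cleanest route is the Poisson-equation trick: for each $x\in B_{R'}$, the centered function $i\mapsto f(x,i)-\bar f(x)$ has zero mean against $\nu$, so by irreducibility of $Q$ there is a unique (centered) solution $\phi(x,\cdot)$ of $Q\phi(x,\cdot)(i)=-[f(x,i)-\bar f(x)]$, inheriting local Lipschitz and $C^2$ regularity in $x$. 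Applying It\^o's formula to $\eps\phi(X^{\eps,\delta}_{x,i}(t),\alpha^\eps(t))$ and using that the generator of $\alpha^\eps$ acts as $Q/\eps$, the drift $\frac{1}{\eps}(Q\phi)(X^{\eps,\delta}_{x,i},\alpha^\eps)$ precisely cancels the integrand of $I_2$. Thus
\[
I_2(t)=\eps\bigl[\phi(x,i)-\phi(X^{\eps,\delta}_{x,i}(t),\alpha^\eps(t))\bigr]+\eps\!\int_0^t\mathcal{A}\phi\,ds+\eps M^W_t+\eps M^\alpha_t,
\]
where $\mathcal{A}\phi$ collects the $\nabla_x\phi\cdot f$ and $\tfrac{\delta}{2}\mathrm{tr}(\sigma\sigma' D_x^2\phi)$ terms (bounded on $B_{R'}$), $M^W_t=\int_0^t\sqrt{\delta}\,\nabla_x\phi\!\cdot\!\sigma\,dW$ is a Brownian martingale, and $M^\alpha_t$ is the compensated jump martingale of $\alpha^\eps$.

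The boundary and $\mathcal{A}\phi$ pieces contribute at most $C\eps(1+T)\leq \gamma/6$ once $\eps$ is small. The Brownian part $\eps M^W$ is again controlled by Lemma \ref{l:exp} with rate $\eps^{-2}\delta^{-1}\geq (\eps+\delta)^{-1}$. The jump martingale $\eps M^\alpha$ has predictable quadratic variation of order $\eps^2\cdot\eps^{-1}T=\eps T$; applying the Bernstein–type exponential inequality for purely discontinuous martingales with bounded jumps of size $O(\eps)$ yields $\PP\{\sup_{t\leq T}|\eps M^\alpha_t|\geq \gamma/6\}\leq \exp(-\kappa_3/\eps)$. I expect the jump–martingale estimate to be the main technical obstacle, because one must combine the Dol\'eans–Dade exponential with the boundedness of jump sizes; the factor $\eps$ in front is essential to convert the $\eps^{-1}$ jump rate into a rate $\eps^{-1}$ in the exponent. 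Collecting the three tails produces an event $\Omega_2^{\eps,\delta}\subset\Omega_1^{\eps,\delta}$ of probability at least $1-\exp(-\kappa/(\eps+\delta))$ on which $\sup_{t\leq T}(|I_2(t)|+|I_3(t)|)\leq \gamma/2$. On this event Gronwall applied to $I_1$, via the Lipschitz constant $L$ of $\bar f$ on $B_{R'}$, gives
\[
\sup_{t\in[0,T]}|X^{\eps,\delta}_{x,i}(t)-\bar X_x(t)|\leq \tfrac{\gamma}{2}e^{LT}<\gamma
\]
after shrinking $\gamma$ by the factor $e^{LT}$ at the outset, completing the proof with $\kappa=\min(\kappa_1,\kappa_2,\kappa_3)/2$.
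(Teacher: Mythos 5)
Your plan has two genuine gaps, both traceable to the fact that the paper's hypotheses are weaker than what your argument uses. The central one is the regularity of the Poisson corrector: under Assumption \ref{asp1}(i) the drifts $f(\cdot,i)$ are only locally Lipschitz, and since $\phi(x,i)$ is a fixed linear combination (via the group inverse of $Q$) of the centered functions $f(x,j)-\bar f(x)$, it is likewise only locally Lipschitz in $x$ --- not $C^1$, let alone $C^2$. Hence the It\^o expansion of $\eps\phi(X^{\eps,\delta}(t),\alpha^\eps(t))$, which is the heart of your cancellation of $I_2$ and produces the terms $\nabla_x\phi\cdot f$, $\sqrt{\delta}\,\nabla_x\phi\cdot\sigma\,dW$ and $\tfrac{\delta}{2}\mathrm{tr}(\sigma\sigma' D_x^2\phi)$, is not justified; the claim that $\phi$ ``inherits $C^2$ regularity'' is false here, and this is exactly the smoothness (Holland's (A1)) that the paper is explicitly trying to dispense with. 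Mollifying $f$ would introduce second derivatives blowing up with the mollification parameter plus extra averaging errors to be balanced against $\eps$ and $\delta$, none of which is addressed. A second, independent gap is the localization: Assumption \ref{asp1}(ii) controls only $(\nabla\Phi)'f$ and says nothing about $(\nabla\Phi)'\sigma$ or $\mathrm{tr}(\sigma\sigma' D^2\Phi)$, so applying It\^o to $\Phi(X^{\eps,\delta}(t))e^{-at}$ together with Lemma \ref{l:exp} cannot yield an event of probability $1-\exp(-\kappa_1/\delta)$ on which the path stays in a ball; the quadratic-variation and correction terms are simply not dominated by $\Phi+1$ under the stated assumptions.

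For contrast, the paper routes the comparison through the switched ODE \eqref{eq2.3}: the Lyapunov condition (ii) is a pure drift condition and therefore bounds $\xi^{\eps}_{x,i}$ on $[0,T]$ almost surely; the coefficients are then truncated outside a slightly larger ball, and Gronwall plus Lemma \ref{l:exp} show the truncated diffusion stays within $\gamma$ of $\xi^{\eps}$ with probability $1-\exp(-k_1/\delta)$, which simultaneously supplies the localization (the diffusion cannot exit the ball before $T$) --- this is Lemma \ref{lm2.1}. The averaging step $\xi^{\eps}\approx\bar X_x$ is not done by a corrector at all but by invoking the large deviation principle of \cite{HYZ}, which needs only Lipschitz coefficients, and uniformity over $x\in B_R$ is obtained by a Gronwall/compactness argument (Lemma \ref{lm2.1b} and the proof of Lemma \ref{lm2.2}). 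Your final bookkeeping $\exp(-c/\delta)+\exp(-c/\eps)\le\exp(-\kappa/(\eps+\delta))$, the Bernstein bound for the jump martingale, and the concluding Gronwall step are fine in principle, but as written the proposal rests on smoothness of the corrector and on a localization estimate that the paper's assumptions do not provide.
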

\begin{lm}\label{lm3.1}
Let $N$ be an open set in $\R^d$ and let $\check\tau_{x, i}^{\eps, \delta}$ be any stopping time.
Suppose that there is an $\ell>0$ such that for all starting points
$(x, i)\in N\times\M$ one has $\PP\{\check\tau_{x, i}^{\eps, \delta}<\ell\}\geq a^{\eps,\delta}>0$, where $\lim\limits_{\eps\to0}a^{\eps,\delta}=0$.
Then $\PP\left\{\check\tau_{x, i}^{\eps, \delta}<\dfrac{\ell}{a^{\eps,\delta}}\right\}>1/2$ for $(x, i)\in N\times\M$ if $\eps$ is sufficiently small.
\end{lm}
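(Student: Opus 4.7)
The plan is to iterate the given lower bound via the strong Markov property of $(X^{\eps,\delta},\alpha^\eps)$ at deterministic times that are integer multiples of $\ell$. Writing $a=a^{\eps,\delta}$ for brevity, the hypothesis rephrases as $\PP_{y,j}\{\check\tau^{\eps,\delta}_{y,j}\geq \ell\}\leq 1-a$ for every $(y,j)\in N\times\M$, and the goal is to show $\PP_{x,i}\{\check\tau^{\eps,\delta}_{x,i}\geq \ell/a\}<1/2$ for all sufficiently small $\eps$.

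The core step is a geometric bound, proved by induction on $k\in\N$:
$$\PP_{x,i}\{\check\tau^{\eps,\delta}_{x,i}\geq k\ell\}\leq (1-a)^k, \qquad (x,i)\in N\times\M.$$
For the induction step I would condition on $\F_{k\ell}$ and invoke the Markov property of $(X^{\eps,\delta},\alpha^\eps)$: on the event $\{\check\tau^{\eps,\delta}_{x,i}\geq k\ell\}$, the conditional probability that $\check\tau$ fails to occur within the next $\ell$ units of time equals $\PP_{X^{\eps,\delta}(k\ell),\alpha^\eps(k\ell)}\{\check\tau^{\eps,\delta}\geq \ell\}$, which is bounded above by $1-a$ thanks to the hypothesis applied to the state at time $k\ell$. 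Multiplying by the inductive bound at step $k$ and taking expectations yields $(1-a)^{k+1}$.

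Once the geometric bound is in place, set $n=\lfloor 1/a\rfloor$, so that $n\ell\leq \ell/a$ and $n\geq 1/a-1$. The bound then gives
$$\PP_{x,i}\{\check\tau^{\eps,\delta}_{x,i}\geq \ell/a\}\leq (1-a)^n\leq (1-a)^{1/a-1}.$$
Since $(1-a)^{1/a}\to e^{-1}<1/2$ as $a\to 0$ (which occurs as $\eps\to 0$ by hypothesis), the right-hand side drops below $1/2$ once $\eps$ is small enough, which is exactly the claim.

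The main technical subtlety is to justify that when the hypothesis is applied at time $k\ell$, the state $(X^{\eps,\delta}(k\ell),\alpha^\eps(k\ell))$ genuinely lies in $N\times\M$ on the non-exit event, so that the bound $\PP\{\check\tau\geq \ell\}\leq 1-a$ is legitimately applicable to its conditional distribution. This is automatic in the intended applications of the lemma, where $\check\tau^{\eps,\delta}$ is the first exit time from an open region containing $N$, so the process remains in $N$ on $\{\check\tau^{\eps,\delta}>k\ell\}$; more generally one can work with the stopped process $X^{\eps,\delta}(\cdot\wedge \check\tau^{\eps,\delta})$ and restrict the argument to initial data in $N$, which is precisely the scope of the conclusion.
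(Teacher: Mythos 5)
Your proposal follows essentially the same route as the paper: iterate the Markov property at the deterministic times $k\ell$ to obtain geometric decay, then choose $n$ comparable to $1/a^{\eps,\delta}$ so that $(1-a^{\eps,\delta})^n\to e^{-1}<1/2$. The only cosmetic difference is that the paper takes $n^{\eps,\delta}=\lceil 1/a^{\eps,\delta}\rceil$ rather than $\lfloor 1/a^{\eps,\delta}\rfloor$, and instead of iterating directly on the events $\{\check\tau^{\eps,\delta}_{x,i}\geq k\ell\}$ it iterates on the events $A_k=\{X^{\eps,\delta}_{x,i}(t)\in N,\ \forall\, (k-1)\ell<t\leq k\ell\}$, which by construction guarantee that the state at time $(k-1)\ell$ lies in $N$ when one conditions on $A_1,\dots,A_{k-1}$, so the hypothesis is legitimately available at each step.

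On the one nontrivial point you flag, your justification is slightly off. You assert that if $\check\tau^{\eps,\delta}$ is the first exit time from an open region \emph{containing} $N$, then the process remains in $N$ on $\{\check\tau^{\eps,\delta}>k\ell\}$. That is not so: on that event the process has merely not left the larger region, and there is no reason for $X^{\eps,\delta}(k\ell)$ to be in $N$, so the hypothesis (which is assumed only for initial data in $N\times\M$) cannot be invoked at time $k\ell$ without further argument. Working with the stopped process does not repair this either. The paper sidesteps the issue by building the ``stay in $N$'' requirement directly into the events $A_k$, at the price of needing the complementary inclusion $\{\check\tau^{\eps,\delta}_{x,i}<\ell\}\subset A_1^c$ (which holds when $\check\tau$ can only occur after leaving $N$). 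In other words, both arguments tacitly use structure beyond ``$\check\tau$ is any stopping time''; your version is fine provided you replace the claim that the process stays in $N$ by the correct requirement, namely that the conditional law of $X^{\eps,\delta}(k\ell)$ on the non-exit event is supported in $N$, which in the intended application is arranged precisely by the paper's choice of events.
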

\begin{lm}\label{lm2.5} The following properties hold:
\begin{enumerate}
\item For any $\theta>0, R>0$, there exists $\wdt T_1>0$ such that
for any $y\in B_R$,
$\bar X_y(t)\in M_{k,\theta}$ for some $t<\wdt T_1$, and some $k\in\{1,\dots, n_0\}$.
\item For any $y\in B_R\setminus\chi_1$ and any $\theta>0$, there exists $\wdt t_y>0$ such that $\bar X_y(t_y)\in\bigcup_{k=2}^{n_0} M_{k,\theta}$.
\item For any $\theta_1>0$, $R\geq R_0$, there exists $\theta_2>0$ such that
$\dist(\bar X_y(t), \chi_1)>\theta_2$ for any $t>0$ and $y\in B_R$ satisfying
$\dist(y,\chi_1)>\theta_1$.
\item Let $\beta$ be a normal unit vector of the stable manifold of \eqref{eq2.2} at an equilibrium $x_l$.
Then for any $m>0$, we can find $\wdt\theta_0>0$ such that
$\{y: |\beta' y|\geq\theta, |y|\leq m\theta\}\cap\chi_l=\emptyset$
for any $\theta\in(0,\wdt\theta_0]$
\end{enumerate}
\end{lm}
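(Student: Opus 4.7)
I would handle the four claims in order, with (iii) requiring the most care.

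For (i), the plan is to combine the Morse decomposition of Assumption~\ref{asp1}(iv) with a compactness argument. For each $y\in B_R$ the omega-limit $\hat\omega(y)$ lies in a single Morse set $M_{m(y)}$, so there exists a finite time $T_y>0$ with $\bar X_y(T_y)\in M_{m(y),\theta/2}$. Continuous dependence on initial conditions makes the inclusion $\bar X_z(T_y)\in M_{m(y),\theta}$ hold on an open neighborhood of $y$, and finitely many such neighborhoods cover the compact set $B_R$, so the uniform time $\wdt T_1$ may be taken as the maximum of the associated $T_y$'s. Part (ii) is the same argument, except that for $y\notin\chi_1$ the index must satisfy $m(y)\geq 2$ (since $\hat\omega(y)\cap M_1=\emptyset$), so the resulting time $\wdt t_y$ places $\bar X_y(\wdt t_y)$ inside $\bigcup_{k\geq 2}M_{k,\theta}$.

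For (iii), I would argue by contradiction. If the claim fails, extract sequences $y_n\in B_R$ with $\dist(y_n,\chi_1)\geq\theta_1$ and times $t_n>0$ with $\dist(\bar X_{y_n}(t_n),\chi_1)\to 0$; the Lyapunov bound in Assumption~\ref{asp1}(ii) keeps the trajectories in a common bounded set, so one may pass to subsequences $y_n\to y_*$ with $\dist(y_*,\chi_1)\geq\theta_1$ and $\bar X_{y_n}(t_n)\to z_*\in\overline{\chi_1}$. If $t_n$ remains bounded, say $t_n\to t_*$, then continuous dependence on initial data yields $\bar X_{y_*}(t_*)=z_*$; flow-invariance of $\overline{\chi_1}$ in both time directions together with the absence of heteroclinic cycles from Assumption~\ref{asp1}(iv) then forces $y_*\in\chi_1$, contradicting $\dist(y_*,\chi_1)\geq\theta_1>0$. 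If $t_n\to\infty$, I apply part~(ii) at a sufficiently small $\theta$ to place $\bar X_{y_n}(s_n)$ inside $\bigcup_{k\geq 2}M_{k,\theta}$ for a uniformly bounded time $s_n\leq\wdt T_1$; since the Morse sets $M_k$ with $k\geq 2$ are compact, isolated, and disjoint from $\chi_1$, one may shrink $\theta$ so that this neighborhood is separated from $\chi_1$ by a fixed positive distance, and forward invariance near $M_k$ keeps subsequent trajectories bounded away from $\chi_1$, again contradicting $\dist(\bar X_{y_n}(t_n),\chi_1)\to 0$. The main obstacle I anticipate is making precise the claim that trajectories which enter a small enough neighborhood of some $M_k$ with $k\geq 2$ stay separated from $\chi_1$; the Morse decomposition together with the no-heteroclinic-cycles assumption is exactly what makes this clean.

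For (iv), translate coordinates so that $x_l=0$. If $x_l$ is a source then $\chi_l=\{0\}$ and the conclusion is immediate, since $|\beta'0|=0<\theta$. Otherwise $x_l$ is hyperbolic, and the local stable manifold theorem gives a $C^1$ parameterization of $\chi_l$ near $0$ of the form $v\mapsto v+g(v)$, with $v$ ranging over the stable eigenspace $E^s$ and $g\colon E^s\to (E^s)^{\perp}$ satisfying $g(0)=0$ and $Dg(0)=0$. Since $\beta\perp E^s$ one has $\beta'y=\beta'g(v)$, whence $|\beta'y|\leq|g(v)|=o(|v|)=o(|y|)$ as $y\to 0$ along $\chi_l$. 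Fix $\wdt\theta_0>0$ so small that $|\beta'y|<|y|/m$ whenever $y\in\chi_l$ and $|y|\leq m\wdt\theta_0$. Then for any $\theta\in(0,\wdt\theta_0]$ and any $y\in\chi_l$ with $|y|\leq m\theta$ it follows that $|\beta'y|<|y|/m\leq\theta$, so $y$ cannot lie in $\{z:|\beta'z|\geq\theta,\,|z|\leq m\theta\}$, which is the desired disjointness.
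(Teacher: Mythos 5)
The paper offers no argument for this lemma --- it simply declares the four claims standard and points to Perko --- so your sketch has to be judged on its own merits rather than against a written proof. Parts (i) and (ii) are fine: omega-limit sets lie in single Morse sets, continuous dependence spreads the finite hitting time to a neighborhood of each initial point, and compactness of $B_R$ yields the uniform $\wdt T_1$; for (ii) the index is necessarily $\geq 2$ because $\hat\omega(y)\subset M_1$ would force $y\in\chi_1$. (One caveat you share with the paper: Assumption \ref{asp1}(ii) as written only prevents finite-time blow-up, giving $\Phi(\bar X_y(t))\leq(\Phi(y)+1)e^{at}$; the boundedness of forward orbits that (i) and all of your compactness extractions need is really a standing dissipativity hypothesis, which the paper itself only gestures at in its remark about working in a compact state space.) The bounded-time half of (iii) is correct and in fact simpler than you make it: $\overline{\chi_1}$ is invariant under the flow in both time directions, so $z_*\in\overline{\chi_1}$ already forces $y_*\in\overline{\chi_1}$, contradicting $\dist(y_*,\chi_1)\geq\theta_1$; no appeal to the absence of heteroclinic cycles is needed there. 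Part (iv) in the source case is immediate, as you say.

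The genuine gaps are in the $t_n\to\infty$ half of (iii) and in the hyperbolic case of (iv). In (iii) you invoke ``forward invariance near $M_k$'' to keep trajectories away from $\chi_1$ once they enter $M_{k,\theta}$ with $k\geq2$. That is false when $x_k$ is a hyperbolic saddle: orbits entering a small neighborhood of $M_k$ leave it along the unstable manifold, and the assertion that they afterwards stay away from $\chi_1$ is precisely the statement being proved, pushed one step up the Morse ordering. A correct argument needs either an induction along the Morse decomposition (an attractor--repeller filtration or Conley-type Lyapunov function), or a limiting-orbit argument: from $\dist(\bar X_{y_n}(t_n),\chi_1)\to0$ with $t_n\to\infty$ extract $z_*\in\overline{\chi_1}$ whose entire backward orbit is a limit of forward-orbit segments issued from $B_R$, hence bounded; its alpha-limit set then lies in some Morse set, and the ordering $l(x)\leq m(x)$ together with condition 2) of the Morse decomposition (no orbit in the maximal compact invariant set can have omega-limit in $M_1$ without lying in $M_1$) produces a forbidden connecting orbit. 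A smaller point in the same step: part (ii) gives only a pointwise time $\wdt t_y$, not the uniform bound $s_n\leq\wdt T_1$ you use; you must rerun the compactness argument on the compact set $\{y\in B_R:\dist(y,\chi_1)\geq\theta_1\}$. In (iv), your parameterization $v\mapsto v+g(v)$ describes only the \emph{local} stable manifold, whereas $\chi_l$ is the \emph{global} stable set: a priori, preimages of $W^s_{\mathrm{loc}}(x_l)$ under the flow could re-enter every ball $B(x_l,m\theta)$ at angles bounded away from $E^s$, and such points would sit exactly in the set $\{y:|\beta'y|\geq\theta,\ |y|\leq m\theta\}$ that the lemma must exclude. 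Ruling out these returning branches is again a Morse/no-cycle argument (condition 2) excludes homoclinic orbits, and a limiting argument excludes accumulation of returns near $x_l$), not a consequence of the stable manifold theorem alone. So your skeleton is the right one --- the standard facts the paper is citing --- but these two steps need to be filled in before the proof is complete.
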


The following lemmas show that the process leaves small neighborhoods around the equilibrium points with strictly positive probability in finite time if we start close to the equilibrium points. Furthermore, this probability can be bounded below uniformly for all starting points close to the equilibrium. We need this because we want to show the convergence of the process to the limit cycle $\Gamma$.

\begin{lm}\label{lm3.2}

Consider an equilibrium $x_l$ and suppose there exists $i^*\in\M$ such that $\beta' f(x_l, i^*)\ne 0$ where $\beta$ is a normal unit vector
of the stable manifold of \eqref{eq2.2} at $x_l$.
Then for any $\Delta>0$ that is sufficiently small and any $R>R_0$, we can find $\theta_1,\theta_3>0$, $H^\Delta_l>0$, and $ \eps_{l}(\Delta)$ such that for $\eps<\eps_{l}(\Delta)$,
$$\PP\left\{\wdt \tau^{\eps,\delta}_{x, i}\leq H^\Delta_l\right\}\geq\psi^{\Delta,\eps}:=\exp\left(-\dfrac{\Delta}{\eps}\right),\,x\in M_{l,\theta_1},$$
where
$$\wdt \tau^{\eps,\delta}_{x, i}:=\inf\{t\geq 0: X^{\eps,\delta}_{x,i}(t)\in B_R \text{ and } \dist(X^{\eps,\delta}_{x,i}(t),\chi_l)\geq \theta_3\}.$$
\end{lm}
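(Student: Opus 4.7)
The strategy is to exploit the existence of a state $i^*$ with $\beta' f(x_l,i^*)\neq 0$: on the event that $\alpha^\eps$ holds in state $i^*$ long enough, the drift $f(\cdot,i^*)$ pushes $X^{\eps,\delta}$ off the stable manifold in direction $\beta$, while the slow diffusion cannot undo this. Replacing $\beta$ by $-\beta$ if needed, assume $c:=\beta'f(x_l,i^*)>0$; by continuity pick $r_0\in(0,1)$ and $K_1,K_2>0$ with $\beta' f(x,i^*)\ge c/2$, $|f(x,i^*)|\le K_1$, $\|\sigma(x,i^*)\|\le K_2$ whenever $|x-x_l|\le r_0$. Put $q:=|q_{i^*i^*}|$, $T_1:=\Delta/(2q)$, and $H_l^\Delta:=1+T_1$. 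Fix $m>4K_1/c$; Lemma \ref{lm2.5}\,(iv) produces $\wdt\theta_0>0$ such that $A_\theta:=\{y\in\R^d:|\beta'(y-x_l)|\ge\theta,\ |y-x_l|\le m\theta\}$ is disjoint from $\chi_l$ for every $\theta\in(0,\wdt\theta_0]$. Set $\theta:=cT_1/4$ (which lies in $(0,\wdt\theta_0]$ for $\Delta$ small) and $\theta_1<\min(r_0/10,\theta/48)$. Compactness of $A_\theta$, continuity of $y\mapsto\dist(y,\chi_l)$, and closedness of the local stable manifold of the hyperbolic equilibrium $x_l$ then yield $\theta_3>0$ with $\dist(A_\theta+x_l,\chi_l\cap B_R)\ge\theta_3$.

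Irreducibility of $Q$ together with the scaling $\alpha^\eps(T_0\eps)\stackrel{d}{=}\tilde\alpha(T_0)$ lets us fix $T_0>0$ (independent of $\eps$) such that $\PP\{\alpha^\eps(T_0\eps)=i^*\mid\alpha^\eps(0)=i\}\ge\nu_{i^*}/2$ for every $i\in\M$. Combining this with the strong Markov property at $T_0\eps$ and the exponential sojourn of $\alpha^\eps$ in state $i^*$ at rate $q/\eps$ gives
\[
\PP(\mathcal{E}_1)\ge\frac{\nu_{i^*}}{2}\,e^{-qT_1/\eps}=\frac{\nu_{i^*}}{2}\,e^{-\Delta/(2\eps)},
\]
where $\mathcal{E}_1:=\{\alpha^\eps(T_0\eps)=i^*\}\cap\{\alpha^\eps(s)=i^*\ \forall\, s\in[T_0\eps,T_0\eps+T_1]\}$.

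On $\mathcal{E}_1$ the SDE reduces on $[T_0\eps,T_0\eps+T_1]$ to one with fixed coefficients $f(\cdot,i^*),\sqrt\delta\sigma(\cdot,i^*)$. Write $Y(t):=\beta'(X^{\eps,\delta}(t)-x_l)$; as long as $|X^{\eps,\delta}(t)-x_l|\le r_0$,
\[
Y(t)-Y(T_0\eps)\ge\tfrac{c}{2}(t-T_0\eps)+\sqrt\delta\,M(t),\qquad |X^{\eps,\delta}(t)-X^{\eps,\delta}(T_0\eps)|\le K_1(t-T_0\eps)+\sqrt\delta\,|N(t)|,
\]
with martingales $M,N$ of quadratic variation controlled by $K_2^2(t-T_0\eps)$. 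Let
\[
\mathcal{E}_2:=\{|X^{\eps,\delta}(T_0\eps)-x|\le\theta_1\}\cap\bigl\{\sqrt\delta\sup_{t\le T_0\eps+T_1}(|M(t)|+|N(t)|)\le\theta_1\bigr\}.
\]
By Doob's $L^2$-inequality (or Lemma \ref{l:exp}) and the independence of $W$ and $\alpha^\eps$, $\PP(\mathcal{E}_2\mid\mathcal{E}_1)\ge 1/2$ for $\delta$ sufficiently small, hence for $\eps$ small. On $\mathcal{E}_1\cap\mathcal{E}_2$, the choices of $\theta_1$ and of $T_1$ (for $\Delta$ small) ensure $|X^{\eps,\delta}(t)-x_l|\le 3\theta_1+K_1T_1<r_0$ throughout the window, so the inequalities extend to $t=T_0\eps+T_1$ and yield $Y(T_0\eps+T_1)\ge cT_1/2-3\theta_1\ge\theta$ and $|X^{\eps,\delta}(T_0\eps+T_1)-x_l|\le K_1T_1+3\theta_1\le m\theta$. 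Thus $X^{\eps,\delta}(T_0\eps+T_1)-x_l\in A_\theta$, so $\dist(X^{\eps,\delta}(T_0\eps+T_1),\chi_l)\ge\theta_3$ and $X^{\eps,\delta}(T_0\eps+T_1)\in B_R$, giving $\wdt\tau^{\eps,\delta}_{x,i}\le T_0\eps+T_1\le H_l^\Delta$. Altogether, $\PP\{\wdt\tau^{\eps,\delta}_{x,i}\le H_l^\Delta\}\ge(\nu_{i^*}/4)e^{-\Delta/(2\eps)}\ge e^{-\Delta/\eps}$ once $\eps<\eps_l(\Delta):=\Delta/[2\log(4/\nu_{i^*})]$.

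\textbf{Main obstacle.} The delicate step is the mutual calibration of the small quantities $\theta_1,\theta,T_1,r_0$: $\theta_1$ has to be small enough that the trajectory, starting anywhere in $M_{l,\theta_1}$, remains inside the continuity region $\{|x-x_l|\le r_0\}$ for the full window (so that $\beta'f\ge c/2$ is valid), yet the deterministic $\beta$-drift over the holding interval of length $T_1=\Delta/(2q)$ must still dominate both $\theta_1$ and the stochastic fluctuations, and the resulting displacement $\theta$ must be admissible in Lemma \ref{lm2.5}\,(iv). A secondary point is that $\chi_l$ need not be closed globally, which is why the compactness argument producing $\theta_3$ appeals to the closedness of the local stable manifold and to $A_\theta$ lying in an arbitrarily small neighborhood of $x_l$.
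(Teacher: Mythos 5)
Your proposal follows essentially the same strategy as the paper's proof: freeze the diffusion in state $i^*$ on a window of length proportional to $\Delta/|q_{i^*i^*}|$, use the nonzero drift $\beta'f(x_l,i^*)$ to push the process off the stable manifold in the $\beta$-direction, control the stochastic terms via a martingale inequality, and produce the factor $\exp(-\Delta/\eps)$ from the exponential sojourn of $\alpha^\eps$ in state $i^*$.

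The one genuine (if minor) difference from the paper is how you bridge from an arbitrary starting state $i$ to $i^*$: you evaluate $\alpha^\eps$ at the fixed deterministic time $T_0\eps$ and use the time-rescaling $\alpha^\eps(T_0\eps)\stackrel{d}{=}\tilde\alpha(T_0)$ together with convergence to stationarity, whereas the paper stops at the hitting time of $i^*$ within $[0,1]$ and applies the strong Markov property there. Both are valid; the paper's choice gives the slightly larger $H^\Delta_l = 1+\Delta/|q_{i^*i^*}|$, while yours gives $1+\Delta/(2q)$. Two small points to tighten: (i) with $\theta_1<\theta/48$ the bound you need is $3\theta_1+K_1T_1\le m\theta$, i.e.\ $\theta/16+4K_1\theta/c\le m\theta$, so $m>4K_1/c$ alone is a hair short — take $m\ge 4K_1/c+1$, say; (ii) for the first part of $\mathcal{E}_2$ you should record that the drift is bounded by some $K_1'$ for all $i\in\M$ on $\{|x-x_l|\le r_0\}$ (not just $i=i^*$), since the chain may visit other states on $[0,T_0\eps]$, and that $K_1'T_0\eps\to0$ ensures $|X^{\eps,\delta}(T_0\eps)-x|\le\theta_1$ with high probability. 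Neither changes the argument. Your observation that $\dist(A_\theta+x_l,\chi_l)=\dist(A_\theta+x_l,\chi_l\cap B_R)$ once $\theta$ and $\theta_3$ are small (because $A_\theta+x_l$ sits deep inside $B_{R_0}$ and hence far from $\partial B_R$) closes the same gap the paper treats implicitly via \eqref{e:dist}.
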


\begin{proof}
Suppose without loss of generality that $x_l=0$.
Let $\beta$ be a normal vector of the stable manifold at $0$ such that $|\beta|=1$ and $\beta'f(0, i^*)>0$. Since $f$ is locally Lipschitz we can find $a_1>0$ such that
\begin{equation}\label{e:a}
\beta'f(x, i^*)>a_1>0, |x|<\theta_0.
\end{equation}
Then $A_1:=\sup_{x<\theta_0}\left\{\frac{|f(x,i^*)|}{\beta'f(x,i^*)}\right\}<\infty$.

Since $\beta$ is perpendicular to the tangent of the stable manifold at $0$,
we can find $\theta_2\in\left(0, \frac1{2+3A_1}\left(\frac{a_1\Delta}{4|q_{i^*i^*}|}\wedge\theta_0\right)\right)$ such that
\begin{equation}\label{e:dist}
\dist(L_l^{\theta_2}, \chi_l):=\theta_3>0
\end{equation}
where
\begin{equation}\label{defL1}
L_l^{\theta_2}=\{x: |x|\leq(2+3A_1)\theta_2 \text{ and } |\beta'x|>\theta_2\}.
\end{equation}
The continuous dependence of the solutions of \eqref{eq2.2} on the starting point and the fact that $0$ is an equilibrium of \eqref{eq2.2} imply that $\bar X$ stays close to $0$ for a finite time if the starting point is close enough to $0$.
Using this, we can derive from Lemma \ref{lm2.2} that there exist numbers $\theta_1\in(0,\theta_2)$ and $k>0$ such that
\begin{equation}\label{lm3.2-e1}
\PP\left\{| X_{x,i}^{\eps,\delta}(t)|<\theta_2, 0<t<1+\frac{1}{|q_{i^*i^*}|}\right\}>1-\exp\left(-\frac{k}{\eps+\delta}\right)\, \text{ for all } x\in M_{l,\theta_1}, i\in\M.
\end{equation}

 First, we consider the case $\alpha^\eps(0)=i^*$. Because of the independence
  of $\alpha^\eps(\cdot)$ and $W(\cdot)$, if $\alpha^\eps(t)=i^*$ for all  $t\in\Big[0, \frac{\Delta}{|q_{i^*i^*}|}\Big]$, the process $X^{\eps, \delta}_{x, i^*}(\cdot)$ has the same distribution on the time interval $\Big[0, \frac{\Delta}{|q_{i^*i^*}|}\Big]$ as that of
  $Z^{\delta}_x$ given by
\begin{equation}\label{e:Z}
dZ^\delta(t)=f(Z^\delta(t), i^*)dt+\sqrt{\delta}\sigma(Z^\delta(t), i^*)dW(t).
\end{equation}
Define the bounded stopping time $$\rho^{\eps,\delta}_x:=\dfrac{\Delta}{|q_{i^*i^*}|}\wedge \inf\{t>0: |Z_x^\delta(t)|\geq \theta_0\}\wedge \inf\{t>0: \beta'Z_x^\delta(t)\geq \theta_2\}.$$

We have
\begin{equation}\label{e:betaZ}
\beta'Z_x^\delta(\rho_x^{\eps,\delta})=\beta'x+\int_0^{\rho^{\eps,\delta}_x}\beta'f(Z_x^\delta(s), i^*)ds+\int_0^{\rho^{\eps,\delta}_x}\sqrt{\delta}\beta'\sigma(Z_x^\delta(s), i^*)dW(s), |x|\leq \theta_0.
\end{equation}
By the exponential martingale inequality from Lemma \ref{l:exp}, there exists a constant $m_3>0$ independent of $\delta$ such that
$$\PP\left(\Omega^{\eps,\delta,1}_{x}\right)>\frac34 \text{ and }\PP\left(\Omega^{\eps,\delta,2}_{x,i}\right)>\frac34$$
where
\bea \ad
\Omega^{\eps,\delta,1}_{x}:=\Bigg\{-\int_0^t\sqrt{\delta}\beta'\sigma(Z_x^\delta(s), i^*)dW(s)
\\ \aad \quad\qquad\qquad
-\dfrac1{\sqrt{\delta}}\int_0^t\delta\beta'\sigma(Z_x^\delta(s), i^*)\sigma(Z_x^\delta(s), i^*)'\beta ds<m_3\sqrt{\delta}, t\in\left[0,{\rho^{\eps,\delta}_x}\right]\Bigg\}
\eea
and
\bea \ad
\Omega^{\eps,\delta,2}_{x}:=\Bigg\{\left|\int_0^t\sqrt{\delta}\sigma(Z_x^\delta(s), i^*)dW(s)\right|
\\ \aad  \quad\qquad\qquad
-\dfrac1{\sqrt{\delta}}\int_0^t\delta\left|\sigma(Z_x^\delta(s), i^*)\sigma(Z_x^\delta(s), i^*)'\right| ds<m_3\sqrt{\delta}, t\in\left[0,{\rho^{\eps,\delta}_x}\right]\Bigg\}.
\eea
This implies that
\begin{equation}\label{e:12}
\PP\left(\Omega^{\eps,\delta,1}_{x}\cap \Omega^{\eps,\delta,2}_{x,i}\right)>\frac{1}{2}.
\end{equation}

Using \eqref{e:a} and \eqref{e:betaZ} we note that on the set $\Omega^{\eps,\delta,1}_{x}$
\begin{equation}\label{lm3.2-e7}
\begin{aligned}
\beta'Z_x^\delta(\rho_x^{\eps,\delta})>&\beta'x+\int_0^{\rho^{\eps,\delta}_x}\beta'f(Z_x^\delta(s), i^*)ds\\
& -\dfrac1{\sqrt{\delta}}\int_0^{\rho^{\eps,\delta}_x}\beta'\delta\sigma(Z_x^\delta(s), i^*)'\sigma(Z_x^\delta(s), i^*)\beta ds-m_3\sqrt{\delta}\\
\geq&-\theta_2+\int_0^{\rho^{\eps,\delta}_x}a_1ds-m_3\sqrt{\delta}
\end{aligned}
\end{equation}
Let $\delta$ be so small that $m_3\sqrt{\delta}<\dfrac{a_1}2\dfrac{\Delta}{|q_{i^*i^*}|}$.
 If $\rho^{\eps,\delta}_x(\omega)=\dfrac{\Delta}{|q_{i^*i^*}|}$ for some $\omega\in\Omega^{\eps,\delta,1}_{x,i}$, using $\theta_2\leq\dfrac{a_1\Delta}{4|q_{i^*i^*}|}=\dfrac{a_1{\rho^{\eps,\delta}_x}}4$,  we get
$$|\beta'Z_x^\delta({\rho^{\eps,\delta}_x}(\omega))|\leq \theta_2< -\theta_2+a_1{\rho^{\eps,\delta}_x}-m_3\sqrt{\delta},$$
which contradicts \eqref{lm3.2-e7}.
As a result, if $x\leq\theta_2, \omega\in\Omega^{\eps,\delta,1}_{x}$ and $\delta$ is sufficiently small, we have
\begin{equation}\label{e:O1rho}
\rho^{\eps,\delta}_x(\omega)<\dfrac{\Delta}{|q_{i^*i^*}|},
\end{equation}
and
by \eqref{e:betaZ} we have
\begin{equation}\label{lm3.2-e9}
\begin{aligned}
\int_0^{\rho^{\eps,\delta}_x}\beta'f(Z_x^\delta(s), i^*)ds
\leq& |\beta'Z_x^\delta(\rho_x^{\eps,\delta})|+|\beta'x|+\sqrt{\delta}\int_0^{\rho^{\eps,\delta}_x}\left|\sigma(Z_x^\delta(s), i^*)\sigma(Z_x^\delta(s), i^*)'\right| ds+m_3\sqrt{\delta} \\
<&3\theta_2.
\end{aligned}
\end{equation}
on $\Omega^{\eps,\delta,1}_{x}\cap \Omega^{\eps,\delta,2}_{x}$.
Using \eqref{e:Z} and \eqref{lm3.2-e9}, one sees
that if $\delta$ is sufficiently small and $|x|<\theta_2$ then for $\omega\in \Omega^{\eps,\delta,1}_{x}\cap \Omega^{\eps,\delta,2}_{x}$,
\begin{equation}\label{lm3.2-e8}
\begin{aligned}
|Z_x(\rho^{\eps,\delta}_x)|<&|x|+\int_0^{\rho^{\eps,\delta}_x}|f(Z_x(t),i^*)|dt+\sqrt{\delta}\int_0^{\rho^{\eps,\delta}_x}\left|\sigma(Z_x^\delta(s), i^*)\sigma(Z_x^\delta(s), i^*)'\right| ds+m_3 \sqrt{\delta}\\
<&2\theta_2+A_1 \int_0^{\rho^{\eps,\delta}_x}\beta'f(Z_x(t),i^*)dt\\
<&(2+3A_1)\theta_2<\theta_0,
\end{aligned}
\end{equation}
Combining \eqref{lm3.2-e8} with the definition of $\rho^{\eps,\delta}_x$  shows that
$\beta'Z_x(\rho^{\eps,\delta}_x)=\theta_2$ and
$|Z_x(\rho^{\eps,\delta}_x)|<(2+3A_1)\theta_2$ on $ \Omega^{\eps,\delta,1}_{x}\cap \Omega^{\eps,\delta,2}_{x}$.
As a result of this and \eqref{e:12},
if $|x|\leq\theta_2$,
$$\PP \left\{ \beta 'Z_x(t)\geq\theta_2, |Z_x(t)|\leq(2+3A_1)\theta_2 \text{ for some } t\in\left[0,\frac\Delta{|q_{i^*i^*}|}\right]\right\}\geq \PP\left(\Omega^{\eps,\delta,1}_{x}\cap \Omega^{\eps,\delta,2}_{x,i}\right)>\frac{1}{2}.$$
Let $$\zeta^{\eps, \delta}_{x, i}:=\inf\{t>0: \beta' X^{\eps,\delta}_{x,i}(t)\geq\theta_2, |X^{\eps,\delta}_{x,i}|\leq(2+3A_1)\theta_2\}=\inf\{t>0: X^{\eps,\delta}_{x,i}\in L_l^{\theta_2}\}.$$
Using the independence of $\alpha^\eps$, the paragraph before equation \eqref{e:Z}, and the last two equations, we obtain
\begin{equation}\label{lm3.2-e2}
\PP\left\{\zeta^{\eps, \delta}_{x, i^*}\leq \dfrac{\Delta}{|q_{i^*i^*}|}\right\}> \dfrac12\PP\left\{\alpha^\eps_{i^*}(t)=i^*, \ ~\text{for all}~  t\in\left[0, \dfrac{\Delta}{|q_{i^*i^*}|}\right]\right\}=\dfrac12\exp\left(-\dfrac{\Delta}{\eps}\right), \text{ if } |x|\leq\theta_1.
\end{equation}

Since $\alpha^\eps(t)$ is ergodic, for any sufficiently small $\eps$, i.e., small enough $\Delta$,
\begin{equation}\label{lm3.2-e3}
\PP\{\alpha^\eps_i(t)=i^* \mbox{ for some } t\in [0, 1]\}>{3 \over 4},  i\in\M.
\end{equation}
 By the strong Markov property, we derive from \eqref{lm3.2-e1}, \eqref{lm3.2-e2}, and \eqref{lm3.2-e3} that for all $(x,i)\in M_{l,\theta_1}\times\M$ and for $\eps$ sufficiently small
\begin{equation}\label{lm3.2-e5}
\PP\left\{\zeta^{\eps, \delta}_{x, i}<1+\dfrac{\Delta}{|q_{i^*i^*}|}\right\}\geq {1\over 4}\exp\left(-\dfrac{\Delta}{\eps}\right).
\end{equation}
The proof is complete by combining this estimate with \eqref{e:dist}.
\end{proof}

\begin{lm}\label{lm3.3}
Suppose that $\lim\limits_{\eps\to0}{\delta\over\eps}=r>0$. Assume that at the equilibrium point $x_l$, one has $f(x_l, i)=0$ for all $i\in\M$, and there is $i^*\in\M$ for which
$\beta'\sigma(x_l,i^*)\ne 0$,
where $\beta$ is a normal unit vector of the stable manifold of \eqref{eq2.2} at $x_l$. Then for any  sufficiently small $\Delta>0$ and any $R>R_0$, we can find $\theta_1,\theta_3>0$, $H^\Delta_l>0$, and $ \eps_{l}(\Delta)>0$ such that for $\eps<\eps_{l}(\Delta)$,
$$\PP\left\{\wdt \tau^{\eps,\delta}_{x, i}\leq H^\Delta_1\right\}\geq\psi^{\Delta,\eps}:=\exp\Big(-\dfrac{\Delta}{\delta}\Big),~\text{for all}~\,(x,i)\in M_{l,\theta_1}\times\M,$$
where
$$\wdt \tau^{\eps,\delta}_{x, i}=\inf\{t\geq 0: X^{\eps,\delta}_{x,i}(t)\in B_R \text{ and } \dist(X^{\eps,\delta}_{x,i}(t),\chi_l)\geq \theta_3\}.$$
\end{lm}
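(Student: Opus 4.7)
The plan is to mirror the architecture of the proof of Lemma~\ref{lm3.2}, replacing the drift-driven escape used there with a diffusion-driven escape in the $\beta$ direction. Without loss of generality assume $x_l=0$, and choose $i^*\in\M$ with $b_1:=|\beta'\sigma(0,i^*)|^2>0$. Local continuity of $\sigma$, together with local Lipschitz-ness of $f$ and the hypothesis $f(0,i)=0$ for every $i\in\M$, yields some $\theta_0'>0$ such that $|\beta'\sigma(x,i^*)|^2\ge b_1/2$ and $|f(x,i^*)|\le L|x|$ for $|x|\le\theta_0'$. As in Lemma~\ref{lm3.2}, pick $A_1$ large (to be determined below), and choose $\theta_2>0$ proportional to $\Delta$ small enough so that $(2+3A_1)\theta_2\le\theta_0'$ and so that the set $L_l^{\theta_2}$ defined by \eqref{defL1} satisfies $\dist(L_l^{\theta_2},\chi_l)=:\theta_3>0$ (via Lemma~\ref{lm2.5}(iv)). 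Choose $T=T(\Delta)>0$ independent of $\eps,\delta$ subject to the two opposing constraints
\[
\frac{4\theta_2^2}{b_1 T}\le\frac{\Delta}{3}\quad\text{and}\quad |q_{i^*i^*}|\,r\,T\le\frac{\Delta}{3};
\]
both can be met simultaneously provided $\theta_2\le c_0\Delta$ for a sufficiently small constant $c_0$. Then Lemma~\ref{lm2.2} yields $\theta_1\in(0,\theta_2/2)$ and $k>0$ such that $\PP\{|X^{\eps,\delta}_{x,i}(t)|<\theta_2/4,\ t\in[0,1+T]\}>1-\exp(-k/(\eps+\delta))$ whenever $x\in M_{l,\theta_1}$.

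Now fix $|x|\le\theta_1$ and condition on $\Xi:=\{\alpha^\eps(t)=i^*\text{ for all }t\in[0,T]\}$, an event of probability $\exp(-|q_{i^*i^*}|T/\eps)=\exp(-(1+o(1))|q_{i^*i^*}|rT/\delta)\ge\exp(-(1+o(1))\Delta/(3\delta))$ since $\eps=\delta/r+o(\delta)$. On $\Xi$, by independence of $\alpha^\eps$ and $W$, the process $X^{\eps,\delta}_{x,i^*}$ has the same law on $[0,T]$ as the frozen-state diffusion $Z^\delta_x$ of \eqref{e:Z}. It thus suffices to establish the core estimate $\PP\{Z^\delta_x(t)\in L_l^{\theta_2}\text{ for some }t\in[0,T]\}\ge\exp(-(1+o(1))\Delta/(2\delta))$ uniformly for $|x|\le\theta_1$.

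To prove the core estimate, decompose $\beta'Z^\delta(t)=\beta'x+D(t)+M(t)$ with $D(t)=\int_0^t\beta'f(Z^\delta(s),i^*)\,ds$ and $M(t)=\sqrt{\delta}\int_0^t\beta'\sigma(Z^\delta(s),i^*)\,dW(s)$. Set $\rho_x:=T\wedge\inf\{t:|Z^\delta(t)|\ge(2+3A_1)\theta_2\}$. The bound $|f|\le L|Z^\delta|$ gives $|D(t)|\le L(2+3A_1)\theta_2 T$ for $t\le\rho_x$, which is $o(\theta_2)$ as $\Delta\downarrow 0$ since both $\theta_2$ and $T$ are $O(\Delta)$. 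An exponential martingale inequality applied to $\sqrt{\delta}\int_0^\cdot\sigma(Z^\delta,i^*)\,dW$ (analogous to the event $\Omega^{\eps,\delta,2}_x$ in the proof of Lemma~\ref{lm3.2}), combined with Gronwall, produces an event $\Omega_1^{\eps,\delta}$ with $\PP(\Omega_1^{\eps,\delta})\ge 1-\exp(-\kappa/\delta)$ on which $|Z^\delta(t)-x|\le(2+3A_1)\theta_2/2$ for all $t\in[0,T]$. Taking $A_1$ sufficiently large makes $\kappa\ge\Delta$, so this error is negligible compared to $\exp(-\Delta/(2\delta))$. On $\Omega_1^{\eps,\delta}$, $\rho_x=T$ and $\langle M\rangle(T)=\delta\int_0^T|\beta'\sigma(Z^\delta(s),i^*)|^2\,ds\ge\delta b_1 T/2$. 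By Dambis--Dubins--Schwarz, $M(t)=\tilde B(\langle M\rangle(t))$ for some Brownian motion $\tilde B$ on a possibly enlarged probability space. Hence, on $\Omega_1^{\eps,\delta}$, the event $\{\sup_{u\le\delta b_1 T/2}\tilde B(u)\ge 2\theta_2\}$ implies $\sup_{s\le T}M(s)\ge 2\theta_2$. By the reflection principle and the Gaussian lower tail $\PP(N(0,1)\ge a)\ge c\,a^{-1}e^{-a^2/2}$,
\[
\PP\{\sup_{u\le\delta b_1 T/2}\tilde B(u)\ge 2\theta_2\}\ \ge\ \frac{c\sqrt{\delta T}}{\theta_2}\exp\!\left(-\frac{4\theta_2^2}{\delta b_1 T}\right)\ \ge\ \exp(-(1+o(1))\Delta/(3\delta))
\]
by our choice of $T$. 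Intersecting with $\Omega_1^{\eps,\delta}$ loses at most $\exp(-\kappa/\delta)\le\exp(-\Delta/\delta)$, so the resulting probability remains at least $\exp(-(1+o(1))\Delta/(2\delta))$. On this intersection, since $|D(T)|+|\beta'x|\le\theta_2/2$, the hitting time of $2\theta_2$ by $M$ forces $\beta'Z^\delta\ge\theta_2$ at some time in $[0,T]$, while $|Z^\delta|\le(2+3A_1)\theta_2$; hence $Z^\delta\in L_l^{\theta_2}$.

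Combining, $\PP\{\wdt\tau^{\eps,\delta}_{x,i^*}\le T\}\ge\exp(-\Delta/\delta)$ for $|x|<\theta_1$ and $\eps$ small. For arbitrary $i\in\M$, ergodicity of $\alpha^\eps$ gives $\PP\{\alpha^\eps_i(t)=i^*\text{ for some }t\in[0,1]\}>3/4$ for small $\eps$ (cf.~\eqref{lm3.2-e3}); together with Lemma~\ref{lm2.2} over $[0,1]$ and the strong Markov property at the first visit to $i^*$, one obtains $\PP\{\wdt\tau^{\eps,\delta}_{x,i}\le 1+T\}\ge\exp(-\Delta/\delta)$ for $\eps$ sufficiently small. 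Setting $H^\Delta_l=1+T$ completes the proof. The main obstacle is the core estimate: matching the switching-cost exponent $\exp(-\Delta/\delta)$ with a rare-event \emph{lower} bound on the diffusion, while simultaneously keeping $|Z^\delta|$ inside the allowed neighborhood via exponential martingale control so that the exit happens through $L_l^{\theta_2}$. This is a delicate small-ball trade-off between $\theta_2$ and $T$, and the Dambis--Dubins--Schwarz time change together with the standard Gaussian lower tail is the key tool.
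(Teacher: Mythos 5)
Your argument is correct, but it handles the switching in a genuinely different way from the paper. You freeze the chain: you condition on $\Xi=\{\alpha^\eps\equiv i^*\text{ on }[0,T]\}$, pay the holding cost $\exp(-|q_{i^*i^*}|T/\eps)\approx\exp(-|q_{i^*i^*}|rT/\delta)$, and then produce the escape from the frozen diffusion \eqref{e:Z} via Dambis--Dubins--Schwarz, the reflection principle and a Gaussian lower tail, which forces the small-ball balance $T\propto\Delta$, $\theta_2\propto\Delta$ so that the holding cost plus the Gaussian cost stay below $\exp(-\Delta/\delta)$. The paper never freezes the chain: it keeps the switching active, defines the time change $\zeta_{t,x,i}$ through the accumulated quadratic variation of the $\beta$-projection along the actual switching path, uses the occupation-time large-deviation bound \eqref{ergodic-alpha} (a high-probability event, costing only $\exp(-a_3/\eps)$ which is negligible) to guarantee the clock reaches $1$ by a time $T$ fixed by $a_2\nu_{i^*}T/2>1$ independent of $\Delta$, and then only needs the one-dimensional Gaussian tail \eqref{e:Omega2}, giving $\theta_2\sim\sqrt\Delta$ rather than your $\theta_2\sim\Delta$. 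Your route is more elementary and mirrors the freezing strategy of Lemma \ref{lm3.2}, but it works precisely because $\delta\asymp\eps$ in this lemma; in the regime of Lemma \ref{lm3.4}, where $\delta/\eps\to\infty$, the holding cost $\exp(-cT/\eps)$ is superexponentially smaller than $\exp(-\Delta/\delta)$ and your mechanism breaks down, whereas the paper's occupation-time/time-change argument is exactly what carries over there. Two small bookkeeping points you should fix but which do not affect validity: the core estimate must be stated uniformly for starting points with $|x|\le c\,\theta_2$ (e.g.\ $c=1/4$) rather than $|x|\le\theta_1$, since after waiting for $\alpha^\eps$ to reach $i^*$ you restart from a point only known to satisfy $|X^{\eps,\delta}(\varsigma)|<\theta_2/4$ (the paper's own chaining around \eqref{lm3.2-e2} has the same structure); and the constants $A_1$, the proportionality constant in $\theta_2=c_1\Delta$, and $T$ must be fixed in that order before letting $\Delta$ be small, to avoid circularity between the confinement exponent $\kappa\propto(2+3A_1)^2\theta_2^2/T$ and the choice of $\theta_2$.
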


\begin{proof}
We can assume without loss of generality that $x_l=0$ and $\lim\limits_{\eps\to0}{\delta\over\eps}=1$. Since $\sigma$ is locally Lipschitz, we can find $a_2>0$ such that
\begin{equation}\label{e:a2}
a_2<\beta'(\sigma\sigma')(y, i^*)\beta, |y|< \theta_0.
\end{equation}
Let $K_l>0$ be such that $|f(x,i)|<K_l|x|$ and $|(\sigma'\sigma)(x,i)|<K_l$ if $|x|<\theta_0, i\in \M$. Fix $T>0$  such that  $\dfrac{a_2\nu_{i^*}T}{2}>1$ and let $\theta_1>0$ be such that
\begin{equation}\label{e:theta1}
(2+K_lT)^2e^{K_lT}\theta_1<\theta_0
\end{equation}
and
$\dist(L_l^{\theta_1}, \chi_l):=\theta_3>0$ where
\begin{equation}\label{defL1}
L_l^{\theta_1}:=\{x: |x|\leq (2+K_lT)^2e^{K_lT}\theta_1 \text{ and } |\beta'x|>\theta_1\}.
\end{equation}
Define
$$\zeta_{t,x, i}:=\inf\left\{u>0:\int_0^u\beta'(\sigma\sigma')\left(\left(1\wedge\frac{\theta_0}{|X^{\eps, \delta}_{x, i}(s)|}\right)X^{\eps, \delta}_{x, i}(s),\alpha^\eps_i(s)\right)\beta ds\geq t\right\}.$$
For all $t\geq 0$, we have by \eqref{e:a2} and the ergodicity of the Markov chain $\alpha^\eps_i$ that $$\PP(\zeta_{t,x,i}<\infty)=1, |x|<\theta_0.$$  As a result the process $(M(t))_{t\geq 0}$ defined by
 $$M(t)=\int_0^{\zeta_{t,x,i}}\beta'\sigma\left(\left(1\wedge\frac{\theta_0}{|X^{\eps, \delta}_{x, i}(s)|}\right)X^{\eps, \delta}_{x, i}(s),\alpha^\eps_i(s)\right)dW(s)$$ is a Brownian motion. This follows from the fact that $M(t)$ is a continuous martingale with quadratic variation $[M,M]_t=t, t\geq 0$.

Set $\theta_2:=(2+K_lT)\theta_1$. Since $M(1)$ has the distribution of a standard normal, for sufficiently small $\delta$, we have the estimate
\begin{equation}\label{e:Omega2}
\PP\{\sqrt{\delta}M(1)>\theta_2\}\geq\dfrac12\exp\left(-\dfrac{\theta_2^2}{\delta}\right), |x|<\theta_0.
\end{equation}
Using the large deviation principle (see \cite{HYZ}), we
can find $a_3=a_3(T)>0$ such that

\begin{equation}\label{ergodic-alpha}
\PP\left\{\dfrac{1}{T}\int_0^{T}\1_{\{\alpha_i^\eps(s)=i^*\}}ds>\dfrac{\nu_{i^*}}2\right\}\geq1-\exp\left(-\dfrac{a_3}{\eps}\right).
\end{equation}
Equation \eqref{e:a2}, the definition of $\zeta_{t,x,i}$, and $\dfrac{a_2\nu_{i^*}T}{2}>1$ yield
\bea \ad \PP\left\{\int_0^T\beta'(\sigma\sigma')\left(\left(1\wedge\frac{\theta_0}{|X^{\eps, \delta}_{x, i}(s)|}\right)X^{\eps, \delta}_{x, i}(s),\alpha^\eps_i(s)\right)\beta ds
 \geq \dfrac{a_2\nu_{i^*}T}{2} \right\}\geq1-\exp\left(-\dfrac{a_3}{\eps}\right),~|x|<\theta_0,\eea
which leads to
\begin{equation}\label{e:zeta1}
\PP\{\zeta_{1, x, i}\leq T\}\geq1-\exp\left(-\dfrac{a_3}{\eps}\right), |x|<\theta_0.
\end{equation}
Define for $|x|<\theta_0$, $i\in\M$
$$
\begin{aligned}
\Omega_{x,i}^{\eps,\delta,3}:=\bigg\{&\left|\sqrt{\delta}\int_0^{t}\sigma\left(\left(1\wedge\frac{\theta_0}{|X^{\eps, \delta}_{x, i}(s)|}\right)X^{\eps, \delta}_{x, i}(s),\alpha^\eps_i(s)\right)dW(s)\right|\\
&\quad<\dfrac{\theta_2}\delta\int_0^t\delta\left|(\sigma'\sigma)\left(\left(1\wedge\frac{\theta_0}{|X^{\eps, \delta}_{x, i}(s)|}\right)X^{\eps, \delta}_{x, i}(s),\alpha^\eps_i(s)\right)\right|ds+\theta_2\leq (K_lT+1)\theta_2, t\in[0,T]\bigg\}
\end{aligned}
$$
and note that the last inequality holds by the definition of $K_l$.
By Lemma \ref{l:exp}
\begin{equation}\label{e:Omega3}
\PP(\Omega_{x,i}^{\eps,\delta,3})\geq 1-\exp\left(-\frac{2\theta_2^2}{\delta}\right), |x|<\theta_0.
\end{equation}

Define the stopping time
$$\zeta_{x,i}=\inf\{t>0: |\beta'X^{\eps,\delta}_{x,i}(t)|\geq\theta_1\}\wedge \inf\{t>0: |X^{\eps,\delta}_{x,i}(t)|\geq (K_l+2)\theta_2e^{K_lT} \}.$$
If  $|x|\leq\theta_1$ and $\omega\in\{\sqrt{\delta}M(1)>\theta_2\}\cap \{\zeta_{1,x,i}\leq T\}\cap \Omega_{x,i}^{\eps,\delta,3}$,
we claim that we must have
\begin{equation}\label{ezT}
\zeta_{x,i}<T.
\end{equation}
We argue by contradiction.
Suppose the three events $\{\sqrt{\delta}M(1)>\theta_2\}$, $\{\zeta_{1,x,i}\leq T\}$, and $\{\zeta_{x, i}\geq\zeta_{1,x,i}\}$ happen simultaneously. Then we get the contradiction
\begin{align*}
\theta_2=(2+K_lT)\theta_1& <\sqrt{\delta}M(1)=\sqrt{\delta}\int_0^{\zeta_{1,x,i}}\beta'\sigma\left(\left(1\wedge\frac{\theta_0}{|X^{\eps, \delta}_{x, i}(s)|}\right)X^{\eps, \delta}_{x, i}(s),\alpha^\eps_i(s)\right)dW(s)\\
& \leq |\beta'X^{\eps, \delta}_{x, i}(\zeta_1)|+|\beta'x|+\Big|\int_0^{\zeta_{1,x,i}}\beta'f(X^{\eps, \delta}_{x, i}(s),\alpha^\eps_i(s))ds\Big|\\
& \leq2\theta_1+\int_0^{\zeta_{1,x,i}}K_l|\beta'X^{\eps,\delta}_{x,i}(s)|ds< (2+K_lT)\theta_1=\theta_2,
\end{align*}
where we used that $\left(1\wedge\frac{\theta_0}{|X^{\eps, \delta}_{x, i}(s)|}\right)X^{\eps, \delta}_{x, i}(s)=X^{\eps, \delta}_{x, i}(s)$ if $s<\zeta_{x,i}$ by the definition of $\zeta_{x,i}$ and \eqref{e:theta1}.

For  $|x|\leq\theta_1$ and $\omega\in\{\sqrt{\delta}M(1)>\theta_2\}
\cap \{\zeta_{x,i}\leq T\}\cap \Omega_{x,i}^{\eps,\delta,3}$,
for any $0\leq t\leq \zeta_{1,x,i}\leq T$,
$$
\begin{aligned}
|X^{\eps,\delta}_{x,i}(t)|\leq& |x|+ \sqrt{\delta}\left|\int_0^{t}\sigma\big(X^{\eps, \delta}_{x, i}(s),\alpha^\eps_i(s)\big)dW(s)\right|
+\int_0^t|f(X^{\eps, \delta}_{x, i}(s),\alpha^\eps_i(s))|ds\\
<& (K_lT+2)\theta_2+K_l\int_0^t |X^{\eps, \delta}_{x, i}(s)|ds.
\end{aligned}
$$
This together with Gronwall's inequality implies that
$$|X^{\eps,\delta}_{x,i}(t)|< (K_lT+2)\theta_2 e^{K_l T}, t\in [0, \zeta_{x,i}]$$
Thus for  $|x|\leq\theta_1$ and $\omega\in\{\sqrt{\delta}M(1)>\theta_2\}\cap \{\zeta_{x,i}\leq T\}\cap \Omega_{x,i}^{\eps,\delta,3}$,
we have that $\zeta_{x,i}<T$ and
 $X^{\eps,\delta}_{x,i}(\zeta_{x,i})<(K_lT+2)\theta_2 e^{K_l T}$
and $\beta' X^{\eps,\delta}_{x,i}(\zeta_{x,i})\geq\theta_1$.

Since $\theta_2<a_3$ and $\lim_{\eps\to 0}\frac{\delta}{\eps}=1$ we have by \eqref{e:Omega2}, \eqref{e:zeta1}, \eqref{e:Omega3} and \eqref{ezT} that for all sufficiently small $\eps$
$$
\PP(\{\sqrt{\delta}M(1)>\theta_2\}\cap \{\zeta_{x,i}\leq T\}\cap \Omega_{x,i}^{\eps,\delta,3})\geq\dfrac14\exp\left(-\dfrac{\theta_2^2}\delta\right)
\geq \dfrac14\exp\left(-\dfrac{\Delta}\delta\right), |x|<\theta_1
$$
if $\Delta<\theta_2^2$,
which completes the proof.
\end{proof}

\begin{lm}\label{lm3.4}
Suppose that $\lim\limits_{\eps\to0}\dfrac\delta\eps=\infty$.
Assume that at the equilibrium point $x_l$ one can find $i^*\in\M$ such that $\beta'\sigma(x_l,i^*)\ne 0$
where $\beta$ is a normal unit vector of the stable manifold of \eqref{eq2.2} at $x_l$.
Then for any sufficiently small $\Delta>0$ and any $R<R_0$ we can find $\theta_1,\theta_3>0$, $H^\Delta_l>0$,and $ \eps_{1}(\Delta)$ such that for $\eps<\eps_{1}(\Delta)$,
$$\PP\left\{\wdt \tau^{\eps,\delta}_{x, i}\leq H^\Delta_l\right\}\geq\psi^{\Delta,\eps}:=\exp\Big(-\dfrac{\Delta}{\delta}\Big)~\text{for all}~\,(x,i)\in M_{l,\theta_1}\times\M ,$$
where
$$\wdt \tau^{\eps,\delta}_{x, i}=\inf\{t\geq 0: X^{\eps,\delta}_{x,i}(t)\in B_R \text{ and } \dist(X^{\eps,\delta}_{x,i}(t),\chi_l)\geq \theta_3\}.$$
\end{lm}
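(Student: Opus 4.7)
The plan is to mimic the proof of Lemma \ref{lm3.3} by exploiting $\beta'\sigma(0,i^*)\neq 0$ to push $\beta'X^{\eps,\delta}$ by an amount of order $\sqrt{\delta}$ in the direction $\beta$; WLOG $x_l=0$. The key new difficulty relative to Lemma \ref{lm3.3} is that we no longer assume $f(0,i)=0$ for every $i$, only that the average $\bar f(0)=0$. The drift must therefore be controlled through the fast averaging of $\alpha^\eps$, which is now even faster than the diffusion, and the Poisson-equation trick will convert this into a quantitative estimate.

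Pick $a_2>0$ with $\beta'(\sigma\sigma')(y,i^*)\beta>a_2$ for $|y|<\theta_0$, and $T>0$ with $a_2\nu_{i^*}T/2>1$. Define the time change $\zeta_{t,x,i}$ exactly as in Lemma \ref{lm3.3}, so that
$$M(t):=\int_0^{\zeta_{t,x,i}}\beta'\sigma\!\left(\!\left(1\wedge\tfrac{\theta_0}{|X^{\eps,\delta}_{x,i}(s)|}\right)\!X^{\eps,\delta}_{x,i}(s),\alpha^\eps_i(s)\right)dW(s)$$
is a standard Brownian motion. The exponential LDP for the occupation measure of $\alpha^\eps$ gives $\PP\{\zeta_{1,x,i}\leq T\}\geq 1-\exp(-a_3/\eps)$, the Gaussian lower tail yields $\PP\{\sqrt{\delta}M(1)>\theta_2\}\geq\tfrac12\exp(-\theta_2^2/\delta)$ for a suitable $\theta_2$, and the exponential martingale inequality (as in the definition of $\Omega_{x,i}^{\eps,\delta,3}$ in Lemma \ref{lm3.3}) controls $\sqrt\delta\int_0^t\sigma\,dW$ of order $\theta_2$ on an event of probability $\geq 1-\exp(-2\theta_2^2/\delta)$.

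The genuinely new step is the drift estimate: with overwhelming probability we need $\left|\int_0^T\beta'f(X^{\eps,\delta}(s)\wedge\theta_0,\alpha^\eps(s))\,ds\right|$ to be much smaller than $\theta_2$. Decompose $f(x,i)=\bar f(x)+g(x,i)$ with $\sum_i g(x,i)\nu_i=0$. Since $\bar f(0)=0$ and $\bar f$ is Lipschitz, the $\bar f$ contribution is bounded by $LT\sup_{s\leq T}|X^{\eps,\delta}(s)|$, which is of order $\theta_2$ on the diffusion-controlled event. For the oscillating part, solve the Poisson equation $\sum_j q_{ij}\psi_j(x)=-g(x,i)$ pointwise in $x$; irreducibility of $Q$ together with local regularity of $g$ produces a bounded, Lipschitz $\psi$. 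Dynkin's formula applied to $\eps\psi(X^{\eps,\delta}(t),\alpha^\eps(t))$ yields
$$\int_0^T g(X^{\eps,\delta},\alpha^\eps)\,ds=-\eps\left[\psi(X^{\eps,\delta}(T),\alpha^\eps(T))-\psi(x,i)\right]+\eps\int_0^T\mathcal{A}^\delta\psi\,ds+N_T,$$
where $\mathcal{A}^\delta$ is the $x$-generator $f\cdot\nabla_x+\tfrac{\delta}{2}\mathrm{tr}(\sigma\sigma'\nabla_x^2)$ and $N_T$ is a martingale whose continuous part has quadratic variation $O(\eps^2\delta T)$ and whose jump part (from the $\alpha^\eps$ transitions) has quadratic variation $O(\eps T)$. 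The boundary and integral terms are $O(\eps)$, and a Bernstein-type exponential martingale inequality gives $\PP\{|N_T|>\theta_2\}\leq\exp(-c\theta_2^2/\eps)$. Because $\delta/\eps\to\infty$, both $\exp(-a_3/\eps)$ and $\exp(-c\theta_2^2/\eps)$ are $o(\exp(-\theta_2^2/\delta))$, so these error events are negligible beside the Gaussian mass.

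With these ingredients the contradiction argument of Lemma \ref{lm3.3} closes: set $L_l^{\theta_1}=\{x:|x|\leq(2+K_lT)^2e^{K_lT}\theta_1,\,|\beta'x|>\theta_1\}$ and $\zeta_{x,i}=\inf\{t:|\beta'X^{\eps,\delta}(t)|\geq\theta_1\}\wedge\inf\{t:|X^{\eps,\delta}(t)|\geq(2+K_lT)\theta_2 e^{K_lT}\}$; choose $\theta_1$ so that $\dist(L_l^{\theta_1},\chi_l)=:\theta_3>0$, and pick $\Delta>\theta_2^2$. On the intersection of the Gaussian, drift, diffusion, and $\{\zeta_{1,x,i}\leq T\}$ events, a Gronwall step keeps $|X^{\eps,\delta}|$ within $(2+K_lT)\theta_2 e^{K_lT}$ up to $\zeta_{x,i}$, and the identity $\beta'X(\zeta_1)=\beta'x+\int_0^{\zeta_1}\beta'f\,ds+\sqrt\delta M(1)$ forces $\beta'X$ to cross $\theta_1$ before time $T$, so $\wdt\tau^{\eps,\delta}_{x,i}\leq T$. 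Combining the estimates gives $\PP\{\wdt\tau^{\eps,\delta}_{x,i}\leq T\}\geq\tfrac14\exp(-\theta_2^2/\delta)\geq\exp(-\Delta/\delta)$ for all sufficiently small $\eps$. The main obstacle is making the fast-switching averaging of the drift quantitatively stronger than the Gaussian cost $\exp(-\theta_2^2/\delta)$ of the diffusion fluctuation; the regime $\delta/\eps\to\infty$ is what allows the $O(\eps)$ Poisson-equation error to win, which is precisely why the intermediate regime of Lemma \ref{lm3.3} needed the stronger assumption $f(x_l,i)=0$ for all $i$.
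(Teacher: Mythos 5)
Your skeleton is the same as the paper's proof: reduce to $x_l=0$, time-change the $\beta$-projected noise into a Brownian motion $M$, use the Gaussian lower bound $\PP\{\sqrt{\delta}M(1)>\theta_2\}\geq\frac12\exp(-\theta_2^2/\delta)$, control $\sqrt{\delta}\int\sigma\,dW$ by the exponential martingale inequality, confine $|X^{\eps,\delta}|$ by Gronwall, run the contradiction argument to force $|\beta'X^{\eps,\delta}|$ past $\theta_1$ before time $T$, and use $\delta/\eps\to\infty$ to make every error event of size $\exp(-c/\eps)$ negligible against $\exp(-\theta_2^2/\delta)$. Where you genuinely differ is the treatment of the non-vanishing drift. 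The paper freezes the spatial argument at the equilibrium and invokes the large deviation principle for $\alpha^\eps$ to get the event $A=\{|\int_0^u f(0,\alpha^\eps_i(s))ds|<\theta_1,\ u\in[0,T]\}$ with probability $\geq 1-\exp(-\kappa/\eps)$ (estimate \eqref{e:estA}), and absorbs the remainder $f(X^{\eps,\delta},\alpha^\eps)-f(0,\alpha^\eps)$ by the local Lipschitz bound into the Gronwall step; you instead write $f=\bar f+g$ with $\sum_i g(\cdot,i)\nu_i=0$ and kill the oscillating part by a Poisson-equation corrector plus a Bernstein bound for the resulting martingale. Both devices produce errors of probability $\exp(-c/\eps)$, which is all this regime needs, so your corrector route is a legitimate, more self-contained quantitative substitute for the paper's citation of the LDP; your closing remark correctly identifies why Lemma \ref{lm3.3} cannot afford this and needs $f(x_l,i)=0$ for all $i$.

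The one step that does not go through as written is the Dynkin/It\^o step for $\eps\psi(X^{\eps,\delta}(t),\alpha^\eps(t))$ with the full generator: the corrector $\psi(\cdot,i)$ is a fixed linear combination of $g(\cdot,j)=f(\cdot,j)-\bar f(\cdot)$, and Assumption \ref{asp1}(i) gives only local Lipschitz continuity of $f(\cdot,j)$, so $\nabla_x^2\psi$ (needed in your term $\eps\int_0^T\mathcal{A}^\delta\psi\,ds$ and to justify It\^o's formula for a possibly degenerate diffusion) need not exist. The cheap fix collapses your argument back onto the paper's: solve the Poisson equation only at the frozen point, $\sum_j q_{ij}c_j=-g(0,i)$ (solvable since $Q$ is irreducible and $\sum_i g(0,i)\nu_i=0$); then $\eps c_{\alpha^\eps(t)}$ involves no $x$-derivatives, Dynkin's formula for the chain alone gives $\sup_{u\leq T}|\int_0^u g(0,\alpha^\eps(s))ds|\leq C\eps+\sup_{u\leq T}|N_u|$ with $N$ a pure-jump martingale of bracket $O(\eps T)$, and your Bernstein estimate reproduces exactly the paper's event \eqref{e:estA}, while $g(X^{\eps,\delta},\alpha^\eps)-g(0,\alpha^\eps)$ is absorbed by the Lipschitz constant into Gronwall as in the paper. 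Two smaller points: the extra drift contributions (frozen part plus $\bar f$ part) force you to enlarge the constant linking $\theta_2$ to $\theta_1$ — the paper takes $\theta_2=(3+K_lT)\theta_1$ rather than the $(2+K_lT)\theta_1$ of Lemma \ref{lm3.3}, and "of order $\theta_2$" is not by itself enough for the contradiction, so the constants must be tracked; and the quantifiers at the end should be ordered as in the statement: $\Delta$ is given first, so shrink $\theta_1$ (hence $\theta_2$) until $\theta_2^2<\Delta$ rather than "pick $\Delta>\theta_2^2$" — harmless, since Lemma \ref{lm2.5}(4) allows $\theta_1$ arbitrarily small while keeping $\dist(L_l^{\theta_1},\chi_l)>0$, but as phrased the parameters are chosen in the wrong order.
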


\begin{proof}
Assume, as in the previous lemmas, that $x_l=0$.
Pick a number $a_2>0$ for which $$a_2<\beta'(\sigma\sigma')(y, i^*)\beta, |y|<\theta_0.$$
Let $K_l>0$ be such that $|\bar f(x)|<K_l|x|$ and $|(\sigma'\sigma)(x,i)|<K_l$ whenever $|x|<\theta_0$,
and fix $T>0$ such that  $\dfrac{a_2\nu_{i^*}T}{2}>1$.
Let $\theta_1>0$ be such that $(3+K_lT)^2e^{K_lT}\theta_1<\theta_0$
and
$\dist(L_l^{\theta_1}, \chi_l):=\theta_3>0$ where
\begin{equation}\label{defL11}
L_l^{\theta_1}=\{x: |x-x_l|\leq (3+K_lT)^2e^{K_lT}\theta_1 \text{ and } |\beta'(x-x_l)|>\theta_1\}.
\end{equation}

Define $\theta_2=(3+K_lT)\theta_1$ and let $a_2, M(t), T, \zeta_{1,x,i}$ be as in the proof of Lemma \ref{lm3.3}.
Arguing as in the proof of \eqref{e:zeta1}, we can find $a_3>0$ such that $$\PP\big\{\zeta_{1,x,i}\leq T\big\}\geq 1-\exp\left(-\dfrac{a_3}\eps\right), |x|<\theta_0.$$
Since $\bar f(0)=0$, we can apply the large deviation principle
(see \cite{HYZ}) to show that
there is $\kappa=\kappa(\Delta)>0$ such that
\begin{equation}\label{e:estA}
\PP(A)\geq 1-\exp\left(-\dfrac{\kappa}\eps\right),
\end{equation}
where $A:=\left\{\left|\int_0^{u}f(0,\alpha_i^\eps(s))ds\right|
<\theta_1,  \text{ for all } u\in[0,T]\right\}$.
The estimates
\bea
M(1) \ad =\int_0^{\zeta_{1,x,i}}\beta'\sigma(X^{\eps,\delta}_{x, i}(s), \alpha_i^\eps(s))dW(s)\\
\ad \leq |\beta'X^{\eps, \delta}_{x, i}(\zeta_{1,x,i})|+|\beta'x|+\Big|\int_0^{\zeta_{1,x,i}}\beta' f(0,\alpha_i^\eps(s))ds\Big|\\
\aad \ +\int_0^{\zeta_{1,x,i}}\big|\beta'\big(f(X^{\eps, \delta}_{x, i}(s),\alpha_i^\eps(s))-f(0,\alpha_i^\eps(s))\big)\big|ds.
\eea
and
$$
\begin{aligned}
|X^{\eps,\delta}_{x,i}(t)|\leq& |x|+ \sqrt{\delta}\left|\int_0^{t}\sigma\big(X^{\eps, \delta}_{x, i}(s),\alpha_i^\eps(s)\big)dW(s)\right|
+\int_0^t|\bar f(X^{\eps, \delta}_{x, i}(s))|ds\\
&+\int_0^t |\bar f(X^{\eps, \delta}_{x, i}(s))-f(X^{\eps, \delta}_{x, i}(s),\alpha_i^\eps(s)|ds
\end{aligned}
$$
together with arguments similar to those from the proof of Lemma \ref{lm3.3} show that
$$
\PP\left\{ X^{\eps,\delta}_{x,i}(t)\in L_l^{\theta_1} \text{ for some } t\in[0,T]\right\}\geq\dfrac14\exp\left(-\frac{\Delta}\delta\right), (x,i)\in M_{l,\theta_1}\times\M
$$
if $\delta$ is sufficiently small.
\end{proof}
\begin{rmk}\label{r:dens}
The results in this section still hold true if one assumes the generator $Q(\cdot)$ of $\alpha(\cdot)$ is state dependent -- see an explanation of the exact setting in Remark \ref{r:state}.  By the large deviation principle in \cite[Section 3]{budhiraja2018large} and the truncation arguments in Lemma \ref{lm2.1}, we can obtain Lemma \ref{lm2.2} for the case of state-dependent switching.
It should be noted that while \cite{budhiraja2018large} only considers Case 1 of \eqref{eq:ep-dl}, using the variational representation,
the arguments in \cite[Section 3]{budhiraja2018large} can be applied to obtain Lemma \ref{lm2.2} for the other cases.

 We can also infer from the large deviation principle that \eqref{lm3.2-e3}, \eqref{ergodic-alpha} and \eqref{e:estA} hold in this setting.
	As a result, Lemmas \ref{lm3.2}, \ref{lm3.3} and \ref{lm3.4} hold. These lemmas, in combination with the proofs from Section \ref{sec:4} imply that the main result, Theorem \ref{t:main},  remains unchanged if one has state-dependent switching.
	
\end{rmk}
\section{Proof of the main result}\label{sec:4}
This section provides the proofs of the convergence of $\mu^{\eps,\delta}$ for the three cases
given in \eqref{eq:ep-dl}.

\begin{prop}\label{p:nbhd}
For every $\eta>0$, there exists $R>R_0$ and neighborhoods $N_1,\dots, N_{n_0-1}$ of $\chi_1\cap B_R,\dots, \chi_{n_0-1}\cap B_R$ such that
$$\limsup_{\eps\to0}  \mu^{\eps,\delta}(\cup_{j=1}^{n_0-1}N_j)\leq 2^{n_0}\eta.$$
\end{prop}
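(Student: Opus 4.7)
The plan is to follow the roadmap sketched after Theorem \ref{t:main}. Fix $\eta>0$ and, by Assumption \ref{asp1}(v), choose $R>R_0$ with $\mu^{\eps,\delta}(B_R)>1-\eta$ uniformly in $\eps$. For each equilibrium $x_l$ with $l=1,\dots,n_0-1$, pick $\Delta>0$ small (to be quantified at the end) and let $\theta_1^{(l)},\theta_3^{(l)},H^\Delta_l$ be the constants produced by whichever of Lemmas \ref{lm3.2}--\ref{lm3.4} applies under Assumption \ref{asp2}. These supply the uniform lower bound $\psi^{\Delta,\eps}=\exp(-\Delta/(\eps+\delta))$ for the probability that $X^{\eps,\delta}_{x,i}$ started in $M_{l,\theta_1^{(l)}}$ reaches $B_R\cap\{\dist(\cdot,\chi_l)\geq \theta_3^{(l)}\}$ within time $H^\Delta_l$.

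Next I construct nested neighborhoods $N_l\subset G_l$ of $\chi_l\cap B_R$. Using Lemma \ref{lm2.5}(1)--(3), any point of $B_R$ close to $\chi_l$ is first driven by the deterministic flow $\bar X$ into $M_{l,\theta_1^{(l)}}$ within a uniform time, so combining with Lemma \ref{lm2.2} and the exit estimate above yields a uniform $L>0$ with $\PP\{\tau^{\eps,\delta}_{x,i}<L\}\geq \tfrac18\psi^{\Delta,\eps}$ for $x\in N_l$, where $\tau^{\eps,\delta}_{x,i}=\inf\{t\geq 0:X^{\eps,\delta}_{x,i}(t)\in B_R\setminus G_l\}$. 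Boosting this through Lemma \ref{lm3.1} (with $\ell=L$ and $a^{\eps,\delta}=\tfrac18\psi^{\Delta,\eps}$) upgrades it to \eqref{sk3}, namely $\PP\{\tau^{\eps,\delta}_{x,i}<T^{\eps,\delta}_{\Delta,1}\}>\tfrac12$ with $T^{\eps,\delta}_{\Delta,1}=C\exp(\Delta/(\eps+\delta))$. A separate application of Lemma \ref{lm2.2} produces a fixed $\hat T$ for which \eqref{sk4}--\eqref{sk5} hold: starting in $B_R\setminus N_l$ the process has exited $G_l$ by time $\hat T$, and starting in $B_R\setminus G_l$ it avoids $N_l$ throughout $[0,\hat T]$, each with probability at least $1-\exp(-\kappa/(\eps+\delta))$.

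Stationarity is then exploited by letting $\widetilde X^{\eps,\delta}$ have marginal $\mu^{\eps,\delta}$ and decomposing
\[
\mu^{\eps,\delta}(N_l)=\PP\{\widetilde X^{\eps,\delta}(T^{\eps,\delta}_{\Delta,1})\in N_l\}=\PP(K_1^{\eps,\delta})+\PP(K_2^{\eps,\delta})+\PP(K_3^{\eps,\delta})+\PP(K_4^{\eps,\delta}),
\]
with the $K_k^{\eps,\delta}$ as in the sketch. Tightness controls $K_4^{\eps,\delta}$ by $\eta$. The amplified exit estimate gives $\PP(K_1^{\eps,\delta})\leq \tfrac12 \mu^{\eps,\delta}(N_l)$. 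The return events $K_2^{\eps,\delta},K_3^{\eps,\delta}$ are bounded by chaining \eqref{sk4}--\eqref{sk5} over $\lfloor T^{\eps,\delta}_{\Delta,1}/\hat T\rfloor$ consecutive windows of length $\hat T$, each contributing an extra factor $\exp(-\kappa/(\eps+\delta))$; choosing $\Delta<\kappa/2$ ensures by \eqref{sk6} that $T^{\eps,\delta}_{\Delta,1}\exp(-\kappa/(\eps+\delta))\to 0$, so $\PP(K_2^{\eps,\delta})+\PP(K_3^{\eps,\delta})\to 0$ as $\eps\to 0$. Rearranging yields $\limsup_{\eps\to 0}\mu^{\eps,\delta}(N_l)\leq 2\eta$. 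Iterating the construction over $l=1,\dots,n_0-1$, each step working on the reduced region $B_R\setminus\bigcup_{j<l}N_j$ and consequently doubling the allowable error in order to absorb the mass of previously constructed neighborhoods, gives $\limsup_{\eps\to 0}\mu^{\eps,\delta}(\bigcup_{l=1}^{n_0-1}N_l)\leq 2^{n_0}\eta$.

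The main obstacle is the estimation of $K_2^{\eps,\delta}$ and $K_3^{\eps,\delta}$: Lemma \ref{lm2.2} must be iterated over the very long horizon $[0,T^{\eps,\delta}_{\Delta,1}]$ without losing the exponential gain. This is possible precisely because $\kappa$ in Lemma \ref{lm2.2} depends only on $R$, the separation and the window length $\hat T$, but not on $\Delta$, so after fixing $\kappa$ we are free to take $\Delta<\kappa/2$. A secondary subtlety is that Lemmas \ref{lm3.2}--\ref{lm3.4} deliver $\psi^{\Delta,\eps}$ of the form $\exp(-\Delta/\eps)$ or $\exp(-\Delta/\delta)$; in each of the three regimes of \eqref{eq:ep-dl} this is equivalent, after rescaling $\Delta$, to $\exp(-\Delta/(\eps+\delta))$, so a single argument runs uniformly across all three cases.
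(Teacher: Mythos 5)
Your proposal follows essentially the same route as the paper's own proof of Proposition~\ref{p:nbhd}: fix $R$ by tightness, build nested neighborhoods $N_l\subset G_l\subset S_l$ of $\chi_l\cap B_R$ via Lemma~\ref{lm2.5}, combine Lemma~\ref{lm2.2} with the appropriate one of Lemmas~\ref{lm3.2}--\ref{lm3.4} to get the uniform exit bound, amplify through Lemma~\ref{lm3.1}, decompose the stationary mass into the four events $K_1^{\eps,\delta},\dots,K_4^{\eps,\delta}$, estimate $K_1$ by the amplified exit bound, $K_4$ by tightness, and $K_2,K_3$ by chaining Lemma~\ref{lm2.2} over windows of length $\hat T$ using the choice $\Delta<\kappa/2$, then iterate over the Morse sets. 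Two small imprecisions worth noting: the iteration in the paper is carried out on $B_R\setminus S_{l-1}$ (the largest of the three nested neighborhoods), not on $B_R\setminus N_{l-1}$ as you wrote, which is what makes the successive $K_4$-type terms controllable; and the rescaling that turns $\exp(-\Delta/\eps)$ or $\exp(-\Delta/\delta)$ into $\exp(-\Delta'/(\eps+\delta))$ depends on the regime (for $\delta/\eps\to l$ one needs $\Delta'<\kappa/(1+l)$ or $\kappa/(1+1/l)$), so "take $\Delta$ sufficiently small depending on the case" is the safer formulation. Neither of these changes the argument, and the paper is equally terse at both points.
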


\begin{proof}
For any $\eta>0$, let $R>R_0$ be
such that $\mu^{\eps,\delta}(B_R)\geq 1-\eta.$ Define
$$S_1=\{y\in B_R: \dist(y,\chi_1\cap B_R)< \theta_0\}$$
In view of Lemma \ref{lm2.5}, there exists $c_2>0$ such that for all $t\geq 0$
\begin{equation}\label{extra-e3.1}
\dist(\bar X_y(t), \chi_1)\geq 2c_2\,\text{ for any }\,y\in B_R\setminus S_1.
\end{equation}
Define
$$G_1=\{y\in B_R: \dist(y,\chi_1\cap B_R)< c_2\}.$$
There exists $c_3>0$ such that
\begin{equation}\label{extra-e3.8}
\dist(\bar X_y(t), \chi_1)\geq 2c_3 \text{ for any }y\in B_R\setminus G_1,\,t\geq 0.
\end{equation}
Note that we have $2c_3\leq c_2$ and $2c_2\leq\theta_0$.
Define
$$N_1=\{y\in B_R: \dist(y,\chi_1\cap B_R)< c_3\}$$
In view of Lemma \ref{lm2.5},
for any $y\notin\chi_1$, there exists $\wdt t_y$ such that  $\bar X_y(\wdt t_y)\in M_{i,\theta_0}\cap (B_R\setminus S_1)$
for some $i>1$.
This fact together with
the continuous dependence of solutions to initial values
and \eqref{extra-e3.1} implies that
there exists $\hat T>0$ such that
\begin{equation}\label{extra-e3.3}
\dist(\bar X_y(t), \chi_1)\geq 2c_2\,\text{ for any }\,t\geq \hat T, y\in B_R\setminus N_1.
\end{equation}
Let $\kappa=\kappa(R,c_3, \hat T)$ be as in Lemma \ref{lm2.2} and $\Delta<\frac{\kappa}2$ and $\theta_1$ and $\psi^\Delta_\eps$ be as in one of the Lemmas \ref{lm3.2}, \ref{lm3.3} and \ref{lm3.4} (depending on which case we are considering).
We have
\begin{equation}\label{extra-e3.6}
\PP(\wdt \tau_{x,i}^{\eps,\delta}<H^\Delta)\geq\psi^\Delta_\eps, x\in M_{1,\theta_1}
\end{equation}
where, as in Section \ref{sec:3}, the stopping time is
$$\wdt \tau^{\eps,\delta}_{x, i}=\inf\{t\geq 0: X^{\eps,\delta}_{x,i}(t)\in B_R \text{ and } \dist(X^{\eps,\delta}_{x,i}(t),\chi_1)\geq \theta_3\}.$$
Define
$$\tau^{\eps,\delta}_{x,i}=\inf\{t\geq0: X_{x,i}^{\eps,\delta}(t)\in B_R\setminus G_1\}.$$
It follows from part (1) of Lemma \ref{lm2.5} that for any $x\in N_1$, there exists a $\wdt T_1>0$ such that
$\bar X_x(t_x)\in \bigcup_{j=1}^{n_0} M_{j,\frac{\theta_1}2}\text{ for some } t_x\leq \wdt T_1$.

Suppose $\bar X_x(t_x)\in \bigcup_{j=2}^{n_0}M_{j,\frac{\theta_1}2}$.
Note that  $\bigcup_{j=2}^{n_0} M_{j,\frac{\theta_1}2} \cap M_{1,c_3}=\emptyset$, $\theta_1<\theta_0$ and that by construction,  $M_{1,2\theta_0}\cap \chi_j=\emptyset, j>1$. These facts imply that $\bigcup_{j=2}^{n_0} M_{j,\frac{\theta_1}2}\cap N_1=\emptyset$.
This together with Lemma \ref{lm2.2} and \eqref{extra-e3.3} implies
\begin{equation}\label{ex.e1}
\PP\{\tau^{\eps,\delta}_{x,i}<\tilde T_1+\hat T\}>\frac12
\end{equation}
for small $\eps>0$.

When $\eps$ is sufficiently small, we have by Lemma \ref{lm2.2} (applied with $\gamma=\frac{\theta_1}{2})$ that for any $x\in N_1$ satisfying
 $\bar X_x(t_x)\in M_{1,\frac{\theta_1}2}$ that
\begin{equation}\label{extra-e3.4}
\PP\{X^{\eps,\delta}_{x,i}(t_x)\in M_{1,\theta_1}\}>\frac12.
\end{equation}
Similarly to \eqref{extra-e3.3},
there exists a $\wdt T_2>0$ such that
$$
\dist(\bar X_y(t), \chi_1)\geq 2c_2\,\text{ for any }\,t\geq \wdt T_2, y\in B_R, \dist(y,\chi_1)\geq\theta_3,
$$
which implies that by Lemma \ref{lm2.2}, for sufficiently small $\eps>0$,
\begin{equation}\label{extra-e3.5}
\PP\left\{\dist(X^{\eps,\delta}_{x,i}(\wdt T_2), \chi_1)\geq c_2\right\}>\frac{1}{2}\,\text{ for any }\, x\in B_R, \dist(x,\chi_1)\geq\theta_3, i\in\M.
\end{equation}

Putting together \eqref{extra-e3.6}, \eqref{extra-e3.4}, and \eqref{extra-e3.5}
we deduce that
\begin{equation}\label{ex.e2}
\PP\{\tau^{\eps,\delta}_{x,i}<\wdt T_1 +H^\Delta+\wdt T_2\}>\dfrac1{4}\psi^\Delta_{\eps}.
\end{equation}
for $\eps$ sufficiently small.
Combining \eqref{ex.e1} and \eqref{ex.e2},
we get that
\begin{equation}\label{ex.e22}
\PP\{\tau^{\eps,\delta}_{x,i}<H^\Delta+\wdt T_1+\wdt T_2+\hat T\}>\dfrac1{8}\psi^\Delta_{\eps}, x\in N_1.
\end{equation}
Define $T^{\eps,\delta}_{\Delta,1}:=4\dfrac{H^\Delta+\wdt T_1+\wdt T_2+\hat T}{\psi_{\eps,\delta}^\Delta}$. Applying Lemma \ref{lm3.1} to \eqref{ex.e22},
we have
\begin{equation}\label{ex.e3}
\PP\left\{\tau^{\eps,\delta}_{x,i}<T^{\eps,\delta}_{\Delta,1}\right\}>\frac12, x\in N_1.
\end{equation}
We will argue by contradiction that $\limsup\limits_{\eps\to0}\mu^{\eps,\delta}(N_1)\leq 2\eta$.
 Assume that $\limsup\limits_{\eps\to0}\mu^{\eps,\delta}(N_1)>2\eta>0$.
Since $\Delta<\kappa/2$, we have
\begin{equation}\label{e:TD}
\lim_{\eps\to 0}T_{\Delta,1}^{\eps,\delta}\exp\left(-\dfrac{\kappa}{\eps+\delta}\right)= 0.
\end{equation}
Let $\widetilde X^{\eps, \delta}(t)$ be the stationary solution, whose
distribution is $\mu^{\eps,\delta}$
for every time $t\geq 0$. Let  $\tau^{\eps,\delta}$
 be the first exit time of $\widetilde X^{\eps, \delta}(t)$ from $G_1$.
Define the events
\begin{align*}
K_1^{\eps, \delta}&=\Big\{\widetilde X^{\eps, \delta}(T_{\Delta,1}^{\eps,\delta})\in N_1, \tau^{\eps,\delta}\geq T_{\Delta,1}^{\eps,\delta}, \widetilde X^{\eps,\delta}(0)\in N_1\Big\}\\
K_2^{\eps, \delta}&=\Big\{\widetilde X^{\eps, \delta}(T_{\Delta,1}^{\eps,\delta})\in N_1, \tau^{\eps,\delta}< T_{\Delta,1}^{\eps,\delta}, \widetilde X^{\eps,\delta}(0)\in N_1\Big\}\\
K_3^{\eps, \delta}&=\Big\{\widetilde X^{\eps, \delta}(T_{\Delta,1}^{\eps,\delta})\in N_1, \widetilde X^{\eps,\delta}(0)\in B_R\setminus N_1\Big\}\\
K_4^{\eps, \delta}&=\Big\{\widetilde X^{\eps, \delta}(T_{\Delta,1}^{\eps,\delta})\in N_1, \widetilde X^{\eps,\delta}(0)\notin B_R\Big\}.
\end{align*}
Note that the above events are disjoint and have union $N_1$. As such $$\mu^{\eps,\delta}(N_1)=\sum_{n=1}^4\PP\{K_n^{\eps, \delta}\}.$$
Using \eqref{ex.e3},  we get that
\begin{equation}\label{e:K1K4}
\PP(K_1^{\eps, \delta})\leq \dfrac12\mu^{\eps,\delta}(N_1) \ \hbox{ and  }
\ \PP(K_4^{\eps, \delta})\leq 1-\mu^{\eps,\delta}(B_R)< \eta.
\end{equation}
Next, we estimate $\PP(K_3^{\eps, \delta}).$
It follows from Lemma \ref{lm2.2}, \eqref{extra-e3.8},
 and \eqref{extra-e3.3} that if $\eps$ is sufficiently small
then
$$\PP\left\{X^{\eps, \delta}_{x, i}(\hat T)\notin G_1 \right\}\geq1-\exp\Big(-\dfrac{\kappa}{\eps+\delta}\Big), x\in B_R\setminus N_1$$
and
$$\PP\left\{X^{\eps, \delta}_{x, i}(t)\notin N_1, \ ~\text{for all}~ t\in[0,\hat T]\right\}\geq1-\exp\Big(-\dfrac{\kappa}{\eps+\delta}\Big), x\in B_R\setminus G_1.$$
Using the last two estimates together with the Markov property one sees that for any $x\in B_R\setminus G_1, i\in\M, s\in[0,T_{\Delta,1}^{\eps,\delta}]$,
\begin{equation}\label{e4.3}
\begin{aligned}
\PP\Big\{ X_{x,i}^{\eps, \delta}&(s )\in N_1\Big\}\\
=&\PP\Big\{ X_{x,i}^{\eps, \delta}(s )\in N_1,  X_{x,i}^{\eps,\delta}(\hat T)\notin B_R\setminus G_1 \Big\}\\
&+\sum_{n=2}^{\lf s /\hat T\rf }\PP\Big\{ X_{x,i}^{\eps, \delta}(s )\in N_1,  X_{x,i}^{\eps,\delta}(n\hat T)\notin B_R\setminus G_1 ,  X_{x,i}^{\eps,\delta}(\iota\hat T)\in B_R\setminus G_1 , \iota=1,...,n-1\Big\}\\
&+\PP\Big\{ X_{x,i}^{\eps, \delta}(s )\in N_1,  X_{x,i}^{\eps,\delta}(\iota\hat T)\in B_R\setminus G_1 , \iota=1,...,[s /\hat T]\Big\}\\
\leq& \PP\Big\{ X_{x,i}^{\eps,\delta}(\hat T)\notin B_R\setminus G_1\Big\}+\sum_{n=2}^{\lf s /\hat T\rf}\PP\Big\{ X_{x,i}^{\eps,\delta}(n\hat T)\notin B_R\setminus G_1 ,  X_{x,i}^{\eps,\delta}((n-1)\hat T)\in B_R\setminus G_1 \}
\\
&+\PP\left\{X^{\eps, \delta}_{x, i}(t)\in N_1, \,\text{ for some } t\in \left[\left\lf s /\hat T\right\rf\hat T, \left\lf s /\hat T\right\rf\hat T+\hat T\right], X^{\eps, \delta}_{x, i}\left(\left\lf s /\hat T\right\rf\hat T\right)\in B_R\setminus G_1 \right\}\\
\leq&\left(\left\lf s /\hat T\right\rf+1\right)\exp\left(-\dfrac{\kappa}{\eps+\delta}\right)\\
\leq&\left(s /\hat T+1\right)\exp\left(-\dfrac{\kappa}{\eps+\delta}\right),
\end{aligned}
\end{equation}
where $\lf s /\hat T\rf $ denotes the integer part of $s /\hat T$.

Note that similar arguments show that \eqref{e4.3} also holds for all $s\in[\hat T,T^{\eps,\delta}_{x,i}]$ and $x\in B_R \setminus N_1$.
It  follows from this with $s=T_{\Delta,1}^{\eps,\delta}$,
$$\PP(K_3^{\eps, \delta})=\PP\left\{\widetilde X^{\eps, \delta}(T_{\Delta,1}^{\eps,\delta})\in N_1, \widetilde X^{\eps,\delta}(0)\in B_R\setminus N_1\right\}\leq\left(T_{\Delta,1}^{\eps,\delta}/\hat T+1\right)\exp\left(-\dfrac{\kappa}{\eps+\delta}\right).$$
This together with \eqref{e:TD} implies that
\begin{equation}\label{e:K3}
\lim_{\eps\to  0}\PP(K_3^{\eps,\delta})=0.
\end{equation}
Using \eqref{e4.3} and the strong Markov property, we get
\begin{equation}\label{e:K2}
\begin{aligned}
\PP(K_2^{\eps, \delta})=&\PP\Big\{\widetilde X^{\eps, \delta}(T_{\Delta,1}^{\eps,\delta})\in N_1, \tau^{\eps,\delta}< T_{\Delta,1}^{\eps,\delta}, \widetilde X^{\eps,\delta}(0)\in N_1\Big\}\\
=&\int_0^{T_{\Delta,1}^{\eps,\delta}}\PP\{\tau^{\eps,\delta}\in dt\}\left[\sum_{i\in\M}\int_{\partial G_1}\PP\left\{ X_{x,i}^{\eps, \delta}(T_{\Delta,1}^{\eps,\delta}-t)\in N_1\right\}\PP\left\{\alpha^\eps(t)=i, \widetilde X^{\eps, \delta}(t)\in dx\right\}\right]\\
\leq& \left(T_{\Delta,1}^{\eps,\delta}/\hat T+1\right)\exp\left(-\dfrac{\kappa}{\eps+\delta}\right)\\
\to&0\text{ as } \eps\to0 \,\text{ due to }\,\eqref{e:TD}.
\end{aligned}
\end{equation}
Putting together the estimates \eqref{e:K1K4}, \eqref{e:K2}, and \eqref{e:K3}, we see that $$\limsup\limits_{\eps\to0}\mu^{\eps,\delta}(N_1)\leq \dfrac12\limsup\limits_{\eps\to0}\mu^{\eps,\delta}(N_1)+0+0+\eta,$$
which contradicts the assumption that $\limsup\limits_{\eps\to0}\mu^{\eps,\delta}(N_1)>2\eta$. We have therefore shown that
\[
\lim_{\eps\to 0} \mu^{\eps,\delta}(N_1)\leq 2\mu.
\]

Define
$$S_2=\{y\in B_R\setminus S_1: \dist(y,\chi_2\cap
B_R\setminus S_1)< \theta_0\}.$$
There exists $c_4>0$ such that
$\dist(\bar X_y(t), \chi_1)\geq 2c_4$ for any $y\in B_R\setminus S_1$.
Define
$$G_2=\{y\in B_R\setminus S_1: \dist(y,\chi_2\cap (B_R\setminus S_1))< c_4\}$$
There exists $c_5>0$ such that
$\dist(\bar X_y(t), \chi_1)\geq 2c_5$ for any $y\in B_R\setminus G_1$.
Define
$$N_2=\{y\in B_R\setminus S_1: \dist(y,\chi_1\cap B_R\setminus S_1)< c_5\}$$
Let $\hat T_2$ be such that
$\bar X_y(\hat T_2)\in B_R\setminus(S_1\cup S_2)$ given that $y\in B_R\setminus(S_1\cup N_2)$.
We can show, just as above, that
there exists a $T_{\Delta,2}^{\eps,\delta}$ such that $\lim_{\eps\to0} T_{\Delta,2}^{\eps,\delta}\exp\left(-\dfrac{\kappa}{\eps+\delta}\right)=0$ and
$$\PP\{\tau_{x,i}^{\eps,\delta}<T_{\Delta,2}^{\eps,\delta}\}>\frac12.$$
Define events
\begin{align*}
K_{1,2}^{\eps, \delta}&=\Big\{\widetilde X^{\eps, \delta}(T_{\Delta,2}^{\eps,\delta})\in N_2, \tau_2^{\eps,\delta}\geq T_{\Delta,2}^{\eps,\delta}, \widetilde X^{\eps,\delta}(0)\in N_2\Big\}\\
K_{2,2}^{\eps, \delta}&=\Big\{\widetilde X^{\eps, \delta}(T_{\Delta,2}^{\eps,\delta})\in N_2, \tau_2^{\eps,\delta}< T_{\Delta,2}^{\eps,\delta}, \widetilde X^{\eps,\delta}(0)\in N_2\Big\}\\
K_{3,2}^{\eps, \delta}&=\Big\{\widetilde X^{\eps, \delta}(T_{\Delta,2}^{\eps,\delta})\in N_2, \widetilde X^{\eps,\delta}(0)\in B_R\setminus (S_1\cup N_2)\Big\}\\
K_{4,2}^{\eps, \delta}&=\Big\{\widetilde X^{\eps, \delta}(T_{\Delta,2}^{\eps,\delta})\in N_2, \widetilde X^{\eps,\delta}(0)\notin B_R\setminus S_1\Big\}.
\end{align*}
Applying the same arguments as in the previous part,
we can show that $\limsup_{\eps\to 0} \mu^{\eps,\delta}(N_2)\leq 4\eta$.
Continuing this process, we can construct neighborhoods $N_1,\dots, N_{n_0-1}$ of $\chi_1\cap B_R$, $\dots$,$\chi_{n_0-1}\cap B_R$ such that
$$\limsup_{\eps\to0}  \mu^{\eps,\delta}(\cup_{j=1}^{n_0-1}N_j)\leq 2^{n_0}\eta.$$
\end{proof}
\main*

\begin{proof}
We have proved in Proposition \ref{p:nbhd} that for any $\eta>0$ we can find $R>0$ and neighborhoods $N_1,\dots, N_{n_0-1}$ of $\chi_1\cap B_R,\dots, \chi_{n_0-1}\cap B_R$ such that
$$\limsup_{\eps\to0}  \mu^{\eps,\delta}(\cup_{j=1}^{n_0}N_j)\leq 2^{n_0+1}\eta.$$
Using this fact together with Assumption \ref{asp1} and Lemma \ref{lm2.2},
by a straightforward modification of the proof of \cite[Theorem 1]{CH},
 we can establish
that for any $\vartheta>0$ there is neighborhood $N$ of the limit cycle $\Gamma$ such that
$$\liminf_{\eps\to  0} \mu^{\eps,\delta} (N)>1-\vartheta.$$
\end{proof}

\section{Proof of Theorem \ref{thm5.1}}\label{sec:5}

To proceed, we first need some auxiliary results.

\begin{lm}\label{lm5.1}
There exist numbers $ K_1, K_2>0$ such that for any $0<\eps,\delta<1$ and any $(i_0, z_0)\in\M\times\inte\R_+^{2}$,
we have
$$\dfrac1t\E\int_0^t|Z_{z_0, i_0}^{\eps,\delta}(s)|^2ds\leq
K_1(1+|z_0|),  t\geq1,$$
and
$$\limsup\limits_{t\to\infty}\E|Z_{z_0, i_0}^{\eps,\delta}(t)|^2\leq K_2.$$
\end{lm}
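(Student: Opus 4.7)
The plan is to introduce the Lyapunov function $V(x,y) := f_M x + y$, where $f_M = \max_{i\in\M} f(i)$, and exploit the algebraic cancellation characteristic of Lotka--Volterra--type systems. Applying It\^o's formula to $V(Z^{\eps,\delta}(t))$ along \eqref{ex1} (noting that $V$ does not depend on $i$, so the Markov chain enters only through the SDE coefficients), the drift equals
\begin{align*}
f_M X\bigl[a(\alpha^\eps) - b(\alpha^\eps)X - Yh\bigr] &+ Y\bigl[-c(\alpha^\eps) - d(\alpha^\eps)Y + f(\alpha^\eps)Xh\bigr] \\
&= f_M a(\alpha^\eps)X - f_M b(\alpha^\eps)X^2 - c(\alpha^\eps)Y - d(\alpha^\eps)Y^2 + XYh\bigl(f(\alpha^\eps)-f_M\bigr).
\end{align*}
The key point is that $f(\alpha^\eps) - f_M \leq 0$ and $h, X, Y \geq 0$, so the mixed $XY$ term is nonpositive. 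Bounding the remaining coefficients uniformly in $i$ (using $a(i) \leq a_M$, $b(i) \geq b_m > 0$, $c(i) \geq 0$, $d(i) \geq d_m > 0$) and applying AM--GM to $f_M a_M X \leq \tfrac{f_M b_m}{2}X^2 + \tfrac{f_M a_M^2}{2b_m}$ yields
\begin{equation*}
dV(Z^{\eps,\delta}) \leq \Bigl[C_0 - \tfrac{f_M b_m}{2}X^2 - d_m Y^2\Bigr]\,dt + dM_t,
\end{equation*}
with $C_0 = f_M a_M^2/(2b_m)$ and $M$ a local martingale whose quadratic variation involves $\sqrt{\delta}$.

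For the first estimate, I would localize by $\tau_n = \inf\{t: |Z^{\eps,\delta}(t)|\geq n\}$, take expectation, use $V\geq 0$, and let $n\to\infty$ by monotone/Fatou to obtain
\begin{equation*}
\tfrac{f_M b_m}{2}\,\E\!\int_0^t X^2(s)\,ds + d_m\,\E\!\int_0^t Y^2(s)\,ds \leq V(z_0) + C_0 t.
\end{equation*}
Dividing by $t\geq 1$ and using $V(z_0) \leq (1+f_M)(1+|z_0|)$ together with $|Z|^2 = X^2+Y^2$ produces the desired $K_1(1+|z_0|)$ bound, with $K_1$ depending only on the coefficient bounds $a_M, b_m, d_m, f_M$ and hence independent of $\eps,\delta \in (0,1)$.

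For the second estimate, I would apply It\^o's formula to $V^2$:
\begin{equation*}
dV^2 = 2V\cdot[\text{drift of }V]\,dt + \delta\bigl[f_M^2\lambda^2(\alpha^\eps)X^2 + \rho^2(\alpha^\eps)Y^2\bigr]\,dt + d\tilde{M}_t.
\end{equation*}
The $2V$ times the nonpositive mixed term is still $\leq 0$, and the diffusion contribution is bounded by $C_1 V^2$ uniformly in $\delta<1$ because $\lambda,\rho$ are bounded. The dominant negative drift from $2V \cdot (-f_M b_m X^2 - d_m Y^2)$ produces cubic terms $-2f_M^2 b_m X^3 - 2d_m Y^3$; since $V \leq \max(f_M,1)(X+Y)$ and $(X+Y)^3 \leq 4(X^3+Y^3)$, one has $X^3 + Y^3 \geq c\, V^3$ for some $c>0$. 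Combining, the drift of $V^2$ satisfies $\text{drift}(V^2) \leq C_2 V^2 - c' V^3$, and elementary case analysis (splitting by whether $V \lessgtr 2C_2/c'$) upgrades this to $\text{drift}(V^2) \leq C_3 - C_4 V^2$. After the same $\tau_n$-localization, taking expectation, and invoking Gr\"onwall's inequality gives $\E V^2(Z^{\eps,\delta}(t)) \leq e^{-C_4 t}V^2(z_0) + C_3/C_4$, so $\limsup_{t\to\infty}\E V^2 \leq C_3/C_4$. Finally, $V \geq \min(f_M,1)|Z|$ yields $\limsup \E|Z(t)|^2 \leq K_2$.

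The main obstacle is keeping all constants uniform in the Markov state $i\in\M$ and in $\eps,\delta \in (0,1)$. Choosing $V$ independent of $i$ avoids having to involve the generator $Q/\eps$ in the It\^o formula (which would introduce $\eps$-dependence), while the specific weight $f_M$ is precisely what makes the $XYh$ cross term sign-definite; a naive choice like $V=x+y$ would leave an uncontrolled positive cross term. Justifying that $\E V^2<\infty$ before applying Gr\"onwall requires the standard stopping-time localization at $\tau_n$; this is routine given the polynomial coefficients and the first moment bound obtained above.
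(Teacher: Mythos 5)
Your proof is correct and takes essentially the same approach as the paper: same Lyapunov function $\hat V(x,y)=f_M x+y$, same exploitation of the sign of the cross term $XYh(f(i)-f_M)\le 0$, same localization by a stopping time, and the same structural bounds $\mathcal{L}\hat V\le \hat K_1-\theta|z|^2$ and $\mathcal{L}(\hat V^2)\le \hat K_2-\hat V^2$. The only cosmetic difference is that the paper applies It\^o to $e^{t}\hat V^2$ and reads off the estimate directly, whereas you derive the drift bound and then invoke Gr\"onwall; these are interchangeable.
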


\begin{proof}
Let
$\theta<\min\{f_Mb(i), d(i): i\in\M\}$.
Define $$\hat  K_1=\sup\limits_{(x, y, i)\in\R^2_+\times\M}\{f_Mx(a(i)-b(i)x)-y(c(i)+d(i)y)+ \theta(x^2+y^2)\}<\infty.$$
Consider $\hat V(x, y, i)=f_M x+y.$
We can  check that $\mathcal{L}^{\eps,\delta}\hat V(x, y, i)\leq\hat  K_1-\theta(x^2+y^2),$
where $\mathcal{L}^{\eps,\delta}$ the generator associated with \eqref{ex1} (see \cite[p. 48]{MY} or \cite{YZ} for the formula of $\mathcal{L}^{\eps,\delta}$).
Similarly, we can verify that there is $\hat K_2>0$ such that for all $\eps<1,\delta<1$,
$\mathcal{L}^{\eps,\delta} (\hat V^2(x, y, i))\leq\hat  K_2-\hat V^2(x, y, i)$.
For each $k >0$,
define the stopping time $\sigma_k=\inf\{t: x(t)+y(t)>k\}.$
By the generalized It\^o  formula for $\hat V(x(t), y(t),\alpha^\eps(t))$
\begin{equation}\label{ex3}
\begin{aligned}
\E \hat V(Z_{z_0, i_0}^{\eps,\delta}(t\wedge\sigma_k), \alpha^\eps(t\wedge\sigma_k))
&= \hat V(z_0, i_0)+\E\int_0^{t\wedge\sigma_k}\mathcal{L}^{\eps,\delta}\hat V(Z_{z_0, i_0}^{\eps,\delta}(s), \alpha^\eps(s))ds\\
&\leq f_M x_0+y_0+\E\int_0^{t\wedge\sigma_k}\big[\hat K_1-\theta|Z_{z_0, i_0}^{\eps,\delta}(s)|^2\big]ds.
\end{aligned}
\end{equation}
Hence
$$\theta\E\int_0^{t\wedge\sigma_k}|Z_{z_0, i_0}^{\eps,\delta}(s)|^2ds\leq f_M x_0+y_0+\hat K_1t.$$
Letting $k\to\infty$ and dividing both sides by $\theta t$ we have
\begin{equation}\label{ex3b}
\dfrac1t\E\int_0^t|Z_{z_0, i_0}^{\eps,\delta}(s)|^2ds\leq \dfrac{f_M x_0+y_0}{\theta t}+\dfrac{\hat K_1}\theta.
\end{equation}
Applying the generalized It\^o  formula to $e^{t}\hat V^2(Z_{z_0, i_0}^{\eps,\delta}(t), \alpha^{\eps}(t))$,
\begin{equation}\label{ex3a}
\begin{aligned}
\E e^{t\wedge\sigma_k}&\hat V^2(Z_{z_0, i_0}^{\eps,\delta}(t\wedge\sigma_k), \alpha^{\eps}(t\wedge\sigma_k))\\
&= \hat V^2(z_0, i_0)+\E\int_0^{t\wedge\sigma_k}e^s\big[(\hat V^2(Z_{z_0, i_0}^{\eps,\delta}(s), \alpha^\eps(s))+\mathcal{L}^{\eps,\delta}\hat V^2(Z_{z_0, i_0}^{\eps,\delta}(s), \alpha^\eps(s))\big]ds\\
&\leq(f_M x_0+y_0)^2+\hat K_2\E\int_0^{t\wedge\sigma_k}e^sds\leq (f_M x_0+y_0)^2+\hat K_2 e^t.
\end{aligned}
\end{equation}
Taking the limit as $k\to\infty$, and then dividing both sides by $e^t$, we have
\begin{equation}\label{ex3c}
\E \big[f_MX_{z_0, i_0}^{\eps,\delta}(t)+Y_{z_0, i_0}^{\eps,\delta}(t)\big]^2\leq (f_M x_0+y_0)^2e^{-t}+\hat K_2.
\end{equation}
The assertions of the lemma follow directly from \eqref{ex3b} and \eqref{ex3c}.
\end{proof}

\begin{lm}\label{lm5.1a}
There is a number $K_3>0$ such that
$$\dfrac1t\E\int_0^t\Big[ \varphi^2(Z_{z, i}^{\eps,\delta}(s), \alpha_i^\eps(s))+\psi^2(Z_{z, i}^{\eps,\delta}(s), \alpha_i^\eps(s))\Big]ds\leq K_3(1+|z|)$$
for all $\eps,\delta\in (0, 1], z\in\inte\R_+^{2}, t\geq 1$.
\end{lm}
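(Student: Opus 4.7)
The plan is to bound the integrand pointwise by a quadratic function of $|Z^{\eps,\delta}|$ and then invoke Lemma \ref{lm5.1}.

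First I would observe that since $\M$ is finite, all of $a(i), b(i), c(i), d(i), f(i)$ are bounded by constants depending only on $\M$; moreover, $h$ is assumed to be positive, bounded, and continuous on $\M\times\R_+^2$, so $|h|\leq h_M$ for some $h_M<\infty$. Using the explicit formulas
\[
\varphi(x,y,i)=a(i)-b(i)x-yh(x,y,i),\qquad \psi(x,y,i)=-c(i)-d(i)y+f(i)xh(x,y,i),
\]
and the elementary inequality $(A+B+C)^2\leq 3(A^2+B^2+C^2)$, I obtain a pointwise bound of the form
\[
\varphi^2(x,y,i)+\psi^2(x,y,i)\leq C_0\bigl(1+x^2+y^2\bigr)=C_0\bigl(1+|(x,y)|^2\bigr)
\]
for some constant $C_0>0$ independent of $i\in\M$ and $(x,y)\in\R_+^{2,\circ}$. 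This is the only place where the boundedness of $h$ is used; no properties of the switching process or the diffusion coefficients are invoked in this step.

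Next, I would apply this pointwise bound to $Z^{\eps,\delta}_{z,i}(s)$ and integrate:
\[
\frac{1}{t}\E\int_0^t\bigl[\varphi^2+\psi^2\bigr](Z^{\eps,\delta}_{z,i}(s),\alpha^\eps_i(s))\,ds
\leq C_0+C_0\cdot\frac{1}{t}\E\int_0^t |Z^{\eps,\delta}_{z,i}(s)|^2\,ds.
\]
By Lemma \ref{lm5.1}, the second term on the right is bounded by $C_0 K_1(1+|z|)$ for every $t\geq 1$ and every $0<\eps,\delta<1$. Choosing $K_3:=C_0(1+K_1)$ yields the claim, since $C_0+C_0 K_1(1+|z|)\leq K_3(1+|z|)$.

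There is really no obstacle: the result is a routine consequence of (a) the at most linear growth of $\varphi,\psi$ in $(x,y)$, which follows immediately from the boundedness of $h$ and the finiteness of $\M$, together with (b) the second-moment estimate from Lemma \ref{lm5.1}. The content of the lemma is essentially to package this moment bound into a form suitable for controlling drift averages in the subsequent tightness/averaging arguments for the predator-prey model.
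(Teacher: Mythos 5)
Your proposal is correct and follows essentially the same route as the paper's proof: bound $\varphi^2+\psi^2$ pointwise by $C(1+|z|^2)$ using the boundedness of $h$ (and finiteness of $\M$), then invoke Lemma \ref{lm5.1}. You have merely written out the bookkeeping that the paper leaves implicit, so there is nothing to add.
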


\begin{proof}
Since the function $h(\cdot, \cdot, i)$ is bounded, we can find $C>0$ such that
$$\varphi^2(z, i)+\psi^2(z, i)\leq C(1+|z|^2).$$
The claim follows by an application of Lemma \ref{lm5.1}.
\end{proof}

Recall that
the two equilibria of \eqref{ex2} on the boundary are both hyperbolic.
Note that the Jacobian of
$\Big(x\bar\phi(x,y), y\bar\psi(x,y)\Big)^\top$
at $\left(\frac{\bar a}{\bar b}, 0\right)$
has two eigenvalues: $-\bar c+\frac{\bar a}{\bar b}h_2\left(\frac{\bar a}{\bar b}, 0\right)>0$
and $-\frac{\bar b^2}{\bar a}<0$.
At $(0,0)$, the two eigenvalues are $\bar a>0$ and $-\bar c<0$, respectively.
If we consider the weighted average Lyapunov exponent,
we can see that the growth rate of $\dfrac{2\bar c}{\bar a} \frac{d\ln X(t)}{dt}+\frac{d\ln Y(t)}{dt}$
is positive both at $(0,0)$ and $\left(\frac{\bar a}{\bar b}, 0\right)$.
This suggests we should look at $\dfrac{2\bar c}{\bar a} \frac{d\ln X(t)}{dt}+\frac{d\ln Y(t)}{dt}$
in order to prove that the dynamics of \eqref{ex2} is pushed away from the boundary.
Then we can use approximation arguments to obtain the tightness of $(Z^{\eps,\delta})$ on $\inte\R_+^{2}$.
Define
$$\Upsilon(z,i):=\frac{2\bar c}{\bar a}\varphi(z,i)+\psi(z,i)$$
and
$$\bar\Upsilon(z):=\frac{2\bar c}{\bar a}\bar\varphi(z)+\bar\psi(z).$$
We have the following lemma.

\begin{lm}\label{lm5.2}
Let $\gamma_0=\dfrac12 \Big(\bar c\wedge \big(-\bar c+\frac{\bar a}{\bar b}h_1(\frac{\bar a}{\bar b}, 0)\big)\Big)>0.$ For any $H>\frac{\bar a}{\bar b}+1$, there are numbers $T, \beta>0$ such that for all $z\in\{(x,y)\in\R^2_+\,|\, x\wedge y\leq\beta,  x\vee y\leq H\}$
\begin{equation}\label{e0-lm5.2}
\bar X_z(T)\vee\bar Y_z(T)\leq H \text{ and } \dfrac1{T}\int_0^{T}\bar\Upsilon(\bar Z_z(t))dt\geq\gamma_0.
\end{equation}
\end{lm}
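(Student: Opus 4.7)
The plan rests on the logarithmic-derivative identity
\begin{equation*}
\frac{d}{dt}\Bigl[\tfrac{2\bar c}{\bar a}\ln\bar X_z(t)+\ln\bar Y_z(t)\Bigr]=\bar\Upsilon(\bar Z_z(t)),
\end{equation*}
which for any $z=(x_0,y_0)\in\R^{2,\circ}_+$ integrates to
\begin{equation*}
\int_0^T\bar\Upsilon(\bar Z_z(t))\,dt=\tfrac{2\bar c}{\bar a}\ln\tfrac{\bar X_z(T)}{x_0}+\ln\tfrac{\bar Y_z(T)}{y_0}.
\end{equation*}
This reduces the time-averaged lower bound to control of the log-ratios of final to initial coordinates.

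First I analyze the boundary flow. On the invariant set $\{y=0\}$, \eqref{ex2} collapses to the logistic ODE $\dot X=X(\bar a-\bar b X)$, whose strictly positive orbits converge to $\bar a/\bar b$; on $\{x=0\}$, $\dot Y=Y(-\bar c-\bar d Y)\leq -\bar c Y$ gives $\bar Y(t)\leq y_0 e^{-\bar c t}$. Hence every boundary trajectory starting in $[0,H]^2$ is attracted to $\{(0,0),(\bar a/\bar b,0)\}$. A direct substitution yields $\bar\Upsilon(0,0)=\bar c$ and $\bar\Upsilon(\bar a/\bar b,0)=-\bar c+\tfrac{\bar a}{\bar b}h_2(\bar a/\bar b,0)$; by Assumption~\ref{asp5.1}(ii) and the definition of $\gamma_0$, both values are at least $2\gamma_0$.

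Next I pick the parameters. Writing $\lambda:=-\bar c+\tfrac{\bar a}{\bar b}h_2(\bar a/\bar b,0)\geq 2\gamma_0$, I choose $T>0$ large and then impose $\beta\leq He^{-\lambda T}$. The latter bound enforces $\bar Y_z(T)\leq H$ since $\bar Y$ grows no faster than at rate $\lambda$ in the region we consider, while the dissipative inequality $\dot X\leq X(\bar a-\bar b X)$ together with $x_0\leq H$ yields $\bar X_z(T)\leq H$. For the averaged lower bound I split on the initial location: when $y_0\leq\beta$ and $x_0\in[\beta,H]$ the trajectory reaches a neighborhood of $(\bar a/\bar b,0)$ in a time depending only on $\beta$, after which $\bar Y$ grows at rate close to $\lambda$ and the term $\ln(\bar Y_z(T)/y_0)$ dominates, contributing approximately $\lambda T$; when $x_0\leq\beta$, either the trajectory follows the $y$-axis flow or enters a neighborhood of $(0,0)$ where $\bar X$ grows at rate $\approx\bar a\geq 2\gamma_0$, making $\tfrac{2\bar c}{\bar a}\ln(\bar X_z(T)/x_0)$ the dominant term. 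The degenerate cases $x_0=0$ or $y_0=0$ are handled by integrating $\bar\Upsilon$ directly along the explicit one-dimensional flow.

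The main obstacle is that as $x_0\to 0^+$ the time for $\bar X_{(x_0,0)}$ to reach $(\bar a/\bar b,0)$ diverges like $\log(1/x_0)$, so no single $T$ can literally shadow the boundary dynamics uniformly for all admissible starting points. The log-derivative identity is precisely what resolves this: the term $\ln(\bar X_z(T)/x_0)$ that becomes large when $x_0$ is tiny provides exactly the compensating positive contribution lost during the slow transient. Balancing these effects forces a coupling of the form $T\sim C\log(1/\beta)$, and the technical content of the proof is to verify that $C$ can be chosen in the narrow window that simultaneously delivers $\tfrac{1}{T}\int_0^T\bar\Upsilon\,dt\geq\gamma_0$ and $\bar X_z(T)\vee\bar Y_z(T)\leq H$ in every case.
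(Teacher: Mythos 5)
Your reduction via the identity $\frac{d}{dt}\bigl[\tfrac{2\bar c}{\bar a}\ln\bar X_z(t)+\ln\bar Y_z(t)\bigr]=\bar\Upsilon(\bar Z_z(t))$ is correct for interior starting points, but the conclusion you draw from it — that no single $T$ can work uniformly and that one is therefore forced into a coupling $T\sim C\log(1/\beta)$ — is mistaken, and this mis-diagnosis is where the proposal goes off the rails. The slow transient of $\bar X_{(x_0,0)}$ near the origin is harmless for the time average, because along that transient the integrand itself is already large: $\bar\Upsilon(0,0)=\bar c\geq 2\gamma_0$, so by continuity $\bar\Upsilon(x,0)\geq\tfrac74\gamma_0$ for all $x\leq\beta_1$ with some fixed $\beta_1>0$. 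The paper exploits exactly this: splitting $[0,t]$ at the first time $\bar t_x$ the $x$-axis orbit reaches $\beta_1$, one gets a bound $\tfrac1t\int_0^t\bar\Upsilon(\bar Z_{(x,0)}(s))\,ds\geq\tfrac32\gamma_0$ for all $t\geq T_3$ \emph{uniformly} in $x\in(0,H]$ (and similarly on the $y$-axis), with $T_3$ depending only on $H$ and the boundary dynamics. One then fixes $T=T_1\vee T_3$ \emph{first} and chooses $\beta$ \emph{afterwards} by continuous dependence on initial data over the compact horizon $[0,T]$, sacrificing $\tfrac12\gamma_0$ and simultaneously securing $\bar X_z(T)\vee\bar Y_z(T)\leq H$ because the perturbed orbit stays near a boundary orbit, which remains well inside $[0,H]^2$. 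No shadowing of the orbit all the way to $(\bar a/\bar b,0)$ is needed, so the obstacle you built your argument around does not exist.

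With your coupled choice the actual difficulties are left unresolved. First, the claimed dominant contributions are not uniformly valid: the same log identity plus periodicity shows that the average of $\bar\Upsilon$ over the limit cycle is zero, so once an orbit escapes to the cycle $\int_0^T\bar\Upsilon$ stops growing and is only of size $\log(1/\beta)$, never ``approximately $\lambda T$''; and in the corner case $x_0=y_0=\beta$ the term $\ln(\bar Y_z(T)/y_0)$ can be \emph{negative} of order $\log(1/\beta)$ (the $y$-coordinate first decays by an extra factor $\approx\beta^{\bar c/\bar a}$ while $x$ climbs from $\beta$), so whether a constant $C$ exists that works for every starting configuration is precisely the verification you defer and never supply. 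Second, the containment $\bar X_z(T)\vee\bar Y_z(T)\leq H$ is not established: your condition $\beta\leq He^{-\lambda T}$ presumes $\bar Y$ grows at rate $\lambda$ throughout and only addresses $y_0\leq\beta$, leaving the case $x_0\leq\beta$, $y_0\in(\beta,H]$ untreated; moreover $H$ is an arbitrary number exceeding $\bar a/\bar b+1$ and need not bound the limit cycle, so if $T$ is taken large enough for orbits started at distance $\beta$ from the boundary to escape — which is what your balancing aims for — then $\bar Y_z(T)\leq H$ can genuinely fail. These two points are exactly what the paper's order of quantifiers ($T$ fixed first, then $\beta$ so small that nothing leaves the boundary tube before time $T$) is designed to avoid; as written, your proposal has a genuine gap at both of them.
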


\begin{proof}
Since $\lim\limits_{t\to\infty}\bar Z_{(0, y)}(t)= (0,0), \ \forall  y\in\R_+$
and
\begin{equation}\label{e1-lm5.2}
\bar\Upsilon(0,0)=\frac{2\bar c}{\bar a}\bar\varphi(0,0)+\bar\psi(0,0)=\frac{2\bar c}{\bar a}\bar a- \bar c=\bar c\geq 2\gamma_0,
\end{equation}
there exists $T_1>0$ such that
\begin{equation}\label{e2-lm5.2}
\dfrac1t\int_0^t\bar\Upsilon(\bar Z_{(0, y)}(s))ds\geq\frac32\gamma_0\,\text{
for }\,t\geq T_1,\,y\in[0,H].
\end{equation}

By \eqref{e1-lm5.2} and the continuity of $\bar\Upsilon(\cdot)$, there exists $\beta_1\in (0, \frac{\bar a}{\bar b})$
such that
\begin{equation}\label{e3-lm5.2}
\bar\Upsilon(x,0)\geq \frac74\gamma_0,\text{ if } x\leq\beta_1.
\end{equation}
Since
$$\bar\Upsilon\left(\frac{\bar a}{\bar b},0\right)=\frac{2\bar c}{\bar a}\bar\varphi \left(\frac{\bar a}{\bar b},0\right)+\bar\psi \left(\frac{\bar a}{\bar b},0\right)=-\bar c+\frac{\bar a}{\bar b}h_1 \left(\frac{\bar a}{\bar b},0\right)\geq2\gamma_0$$
and
$$\lim\limits_{t\to\infty}\bar Z_{(x, 0)}(t)\to \left(\frac{\bar a}{\bar b},0\right), \ \forall  x>0,$$
there exists a $T_2>0$ such that
\begin{equation}\label{e5-lm5.2}
\dfrac1t\int_0^t\bar\Upsilon(\bar Z_{(x, 0)}(s))ds\geq\frac74\gamma_0\,\text{
for }\,t\geq T_2,\,x\in[\beta_1,H].
\end{equation}

Let $\bar M_H=\sup_{x\in[0,H]}\left\{|\bar\Upsilon(x,0)|\right\}
$,
$\bar t_{x}=\inf\{t\geq0: X_{x,0}\geq\beta_1\}$
and $T_3=\left(4\frac{\bar M_H}{\gamma_0}+7\right)T_2$.
It can be seen from the equation of $\bar X(t)$ that
$\bar X_{(x,0)}(t)\in[\beta_1,H]$
if $t\geq \bar t_x, x\in(0,\beta_1]$.
For $t\geq T_3$, we can use \eqref{e3-lm5.2} and \eqref{e5-lm5.2} to estimate $\frac1t\int_0^t\bar\Upsilon(\bar Z_{(x, 0)}(s))ds$ in
the following three cases.

{\bf Case 1}.  If $t-T_2\leq \bar t_x\leq t$
then
$$
\begin{aligned}
\int_0^t\bar\Upsilon(\bar Z_{(x, 0)}(s))ds
&=
\int_0^{\bar t_x}\bar\Upsilon(\bar Z_{(x, 0)}(s))ds
+\int_{\bar t_x}^t\bar\Upsilon(\bar Z_{(x, 0)}(s))ds\\
&\geq \frac74\gamma_0(t-T_2)-T_2\bar M_H\geq\frac32\gamma_0t,\,\,\bigg(\text{since }\, t\geq \Big(4\frac{\bar M_H}{\gamma_0}+7\Big)T_2\bigg).
\end{aligned}
$$

{\bf Case 2}.  If  $\bar t_x\leq t-T_2$,
then
$$
\begin{aligned}
\int_0^t\bar\Upsilon(\bar Z_{(x, 0)}(s))ds
&=
\int_0^{\bar t_x}\bar\Upsilon(\bar Z_{(x, 0)}(s))ds
+\int_{\bar t_x}^t\bar\Upsilon(\bar Z_{(x, 0)}(s))ds\\
&\geq \frac74\gamma_0(t-\bar t_x)+\frac74\gamma_0\bar t_x\geq\frac32\gamma_0t.
\end{aligned}
$$

{\bf Case 3}.  If  $\bar t_x\geq t$,
then
$$
\begin{aligned}
\int_0^t\bar\Upsilon(\bar Z_{(x, 0)}(s))ds
&=
\int_0^{\bar t_x}\bar\Upsilon(\bar Z_{(x, 0)}(s))ds
\geq \frac74\gamma_0\bar t_x\geq\frac32\gamma_0t.
\end{aligned}
$$
As a result,
\begin{equation}\label{e6-lm5.2}
\dfrac1t\int_0^t\bar\Upsilon(\bar Z_{(x, 0)}(s))ds\geq\frac32\gamma_0,\,\text{ if } t\geq T_3, x\in(0,H].
\end{equation}
Let $T=T_1\vee T_3$.
By the continuous dependence
of solutions on initial values,
there is $\beta>0$
such that
\begin{equation}\label{e7-lm5.2}
\bar X_z(T)\vee\bar Y_z(T)\leq H \text{ and } \dfrac1T\int_0^T\left|\bar\Upsilon(\bar Z_{z_1}(s))-\bar\Upsilon(\bar Z_{z_2}(s))\right|ds\leq \frac12\gamma_0
\end{equation}
given that $|z_1-z_2|\leq\beta, z_1,z_2\in[0,H]^2.$
Combining \eqref{e2-lm5.2}, \eqref{e6-lm5.2} and \eqref{e7-lm5.2}
we obtain the desired result.
\end{proof}

Generalizing the techniques in \cite{DY},
we divide the proof of the eventual tightness into two lemmas.

\begin{lm}\label{lm5.3}
For any $\Delta>0$, there exist $\eps_0, \delta_0, T>0$ and a compact set $\mathcal K\subset\inte\R_+^{2}$ such that
$$\liminf\limits_{k\to\infty}\dfrac1k\sum_{n=0}^{k-1}\PP\left\{Z^{\eps,\delta}_{z_0, i_0}(nT)\in \mathcal K\right\}\geq 1-\dfrac\Delta3\, \text{ for any }\, \eps<\eps_0, \delta<\delta_0, z\in\inte\R_+^{2}.$$
\end{lm}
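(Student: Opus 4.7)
The plan is a Foster--Lyapunov argument built on the drift estimate supplied by Lemma \ref{lm5.2} (which drives the process away from the axes) and the moment bound of Lemma \ref{lm5.1} (which prevents escape to infinity). Fix $H>\tfrac{\bar a}{\bar b}+1$ large and let $T,\beta$ be as provided by Lemma \ref{lm5.2}; the candidate compact set is $\mathcal{K}=[\beta,H]^2$. The target is a one-step inequality of the form
\[
\E V\bigl(Z^{\eps,\delta}_{z_0,i_0}(T)\bigr) - V(z_0) \leq -c + C\,\1_{\mathcal{K}}(z_0),
\]
uniformly for $i_0\in\M$ and all sufficiently small $\eps,\delta$, with $c,C>0$ and $c/C\geq 1-\Delta/3$. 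Iterating along $\{nT\}$ and using $V\geq 0$ would then give
\[
\frac{1}{k}\sum_{n=0}^{k-1}\PP\bigl\{Z^{\eps,\delta}_{z_0,i_0}(nT)\in\mathcal{K}\bigr\} \geq \frac{c}{C} - \frac{V(z_0)}{CTk} \;\longrightarrow\; \frac{c}{C} \quad\text{as } k\to\infty.
\]

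The Lyapunov function I propose is
\[
V(x,y) \;=\; -\tfrac{2\bar c}{\bar a}\log x - \log y + C_1(x+y),
\]
with $C_1>0$ chosen so that $V$ is bounded below on $\R^{2,\circ}_+$ and each sublevel set $\{V\le N\}$ is a compact subset of $\R^{2,\circ}_+$. Applying It\^o's formula with the generator of \eqref{ex1} (localized and then extended using Lemma \ref{lm5.1} to justify integrability) yields
\begin{align*}
V(Z^{\eps,\delta}(T))-V(z_0) &= -\int_0^T \Upsilon(Z^{\eps,\delta}(s),\alpha^\eps(s))\,ds + C_1\int_0^T \bigl[X\varphi+Y\psi\bigr](Z^{\eps,\delta}(s),\alpha^\eps(s))\,ds \\
&\quad + \frac{\delta}{2}\int_0^T \bigl[\tfrac{2\bar c}{\bar a}\lambda^2+\rho^2\bigr](\alpha^\eps(s))\,ds + M(T),
\end{align*}
with $M$ a martingale. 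Taking expectations kills $M$, and the three deterministic integrals must be estimated.

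The main step is to show that, for $z_0\in\mathcal{B}:=\{x\wedge y\leq\beta,\ x\vee y\leq H\}$, one has $\E\int_0^T \Upsilon(Z^{\eps,\delta}(s),\alpha^\eps(s))\,ds \geq \tfrac{3\gamma_0 T}{4}$ for $\eps,\delta$ small. I combine three ingredients: (a) by Lemma \ref{lm2.2}, $Z^{\eps,\delta}(s)$ remains inside an arbitrarily small tube around $\bar Z_{z_0}(s)$ on $[0,T]$ with probability at least $1-\exp(-\kappa/(\eps+\delta))$, uniformly for $z_0\in\mathcal{B}$; (b) the large deviation estimate that gave \eqref{ergodic-alpha}, now applied to the continuous functional $s\mapsto \Upsilon(\bar Z_{z_0}(s),\cdot)$, lets us replace the time integral of $\Upsilon(\bar Z_{z_0},\alpha^\eps)$ by $\int_0^T\bar\Upsilon(\bar Z_{z_0}(s))\,ds$ with negligible error; (c) Lemma \ref{lm5.2} bounds this last integral from below by $\gamma_0 T$. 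Uniformity over $z_0\in\mathcal{B}$ is legitimate because $h$ is bounded, hence $\Upsilon$ is uniformly continuous on $\M\times[0,H]^2$.

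For $z_0$ with $|z_0|>H$ the inequality is obtained by choosing $C_1$ small enough that $C_1[x\varphi+y\psi]$ inherits the dissipative bound $\hat K_1-\theta(x^2+y^2)$ from the proof of Lemma \ref{lm5.1}, producing strictly negative drift once $|z_0|$ exceeds a suitable threshold; the $\delta$-correction contributes only $O(\delta T)$ and is absorbed. The principal obstacle is ingredient (b) above: ensuring that the mixing of the $\eps$-scale process $\alpha^\eps$ against $\nu$ holds uniformly in the starting point $z_0\in\mathcal B$ and along paths $\bar Z_{z_0}(\cdot)$ that may skim close to the origin, where the orbits of the averaged flow slow down but $\Upsilon$ remains bounded. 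This is handled by separating the $\eps$-averaging of $\alpha^\eps$ from the $(\eps,\delta)$-tracking of $Z^{\eps,\delta}$ by $\bar Z_{z_0}$ and using the compactness of $\mathcal{B}$.
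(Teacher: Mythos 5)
There is a genuine gap at the heart of your scheme: the one\hbox{-}step Foster--Lyapunov inequality
$\E V(Z^{\eps,\delta}_{z_0,i_0}(T))-V(z_0)\le -c + C\,\1_{\mathcal K}(z_0)$ with the ratio $c/C\ge 1-\Delta/3$ is stated as the target but there is no mechanism that could produce it, and for your (untruncated) $V$ it is false in general. The constant $C$ must dominate the maximal expected one-step \emph{increase} of $V$ from a starting point inside $\mathcal K=[\beta,H]^2$; since the drift of the logarithmic part is $-\Upsilon$ plus It\^o corrections, and $\Upsilon$ has no favorable sign on $\mathcal K$ (its time average along the limit cycle is zero, and it is negative where $\varphi,\psi<0$), this increase is of a fixed order $\sup_{\mathcal K\times\M}|\Upsilon|\cdot T$, which cannot be tuned with $\Delta$. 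Meanwhile the decrease $c$ available for starting points in the boundary strip $\mathcal B$ is at most of order $\gamma_0 T$ (Lemma \ref{lm5.2}), so $c/C$ is a fixed, typically small, number. Iterating your inequality then only yields $\liminf_k \frac1k\sum_n \PP\{Z(nT)\in\mathcal K\}\ge c/C$, which is nowhere near $1-\Delta/3$. Scaling $V$ does not help (both $c$ and $C$ scale together), so the core step of the argument fails as written; the rest (ingredients (a)--(c), which mirror Lemma \ref{lm2.2}, the $\alpha^\eps$-averaging estimate, and Lemma \ref{lm5.2}) is fine but cannot compensate.

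The paper's proof avoids exactly this obstruction by a truncation device you are missing. It keeps the purely logarithmic $V$ (no linear term), sets $L_1:=\sup_{D\setminus D_1}V+\Lambda T$ and works with $U:=V\vee L_1$, which is \emph{constant} on the middle region $D\setminus D_1$; consequently the one-step expected change of $U$ started from the eventual compact set $D\setminus D_2$ is not a fixed constant but only $K_4(2H+1)\sqrt{\varsigma}$, coming from the small-probability complement of the good event and controlled by the Cauchy--Schwarz/It\^o-isometry estimate \eqref{ex11}--\eqref{ex12}; this penalty is tunable via $\varsigma$ and is chosen $\le \frac{0.1\gamma_0}{6}\Delta$, while the decrease of order $0.25\gamma_0(1-\varsigma)$ persists on $D_2$ (the set near the axes where $V>L_2$). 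Excursions outside $D$ are not handled by adding $C_1(x+y)$ to the Lyapunov function but separately, through the moment bounds of Lemma \ref{lm5.1} and Markov's inequality with $H=H(\Delta)$ large, which make the corresponding terms in the telescoped sum of order $\gamma_0\Delta$. Summing the telescoping inequality for $\E U(Z(nT))$ and using $\liminf_k\frac1{kT}\E U(Z(kT))\ge 0$ then gives $\limsup_k\frac1k\sum_n\PP\{Z(nT)\in D_2\}\le\Delta/6$ and $\limsup_k\frac1k\sum_n\PP\{Z(nT)\notin D\}\le\Delta/6$, i.e.\ occupation at least $1-\Delta/3$ of the compact set $D\setminus D_2$. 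To repair your proposal you would need to incorporate this truncation (or an equivalent excursion/renewal argument) so that the ``gain'' inside the recurrent region is made negligible rather than a fixed constant.
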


\begin{proof}
For any $\Delta>0$, let $H=H(\Delta)>\frac{\bar a}{\bar b}+1$ be chosen later and define
$D=\{(x, y): 0<x, y\leq H\}$.
Let $T>0$ and $\beta>0$ such that \eqref{e0-lm5.2} is satisfied
and $D_1=\{(x, y): 0<x, y\leq H, x\wedge y<\beta\}\subset D$.
Define $V(x, y)=-\frac{2\bar c}{\bar a}\ln x-\ln y+ C$ where $C$ is a positive constant such that $V(z)\geq0\,\forall\, z\in D$. In view of the generalized It\^o  formula,
$$
\begin{aligned}
V(Z_{z, i}^{\eps,\delta}(t))-V(z)=&\int_0^t\left[-\Upsilon\big(Z_{z, i}^{\eps,\delta}(s), \alpha^\eps(s)\big)+\dfrac{\delta}2\left(\frac{2\bar c}{\bar a}\lambda^2(\alpha^\eps(s))+\rho^2(\alpha^\eps(s))\right)\right]ds\\
&-\frac{2\bar c}{\bar a}\int_0^t\sqrt{\delta}\lambda(\alpha^\eps(s))dW_1(s)-\int_0^t\sqrt{\delta}\rho(\alpha^\eps(s))dW_2(s).
\end{aligned}
$$
For $A\in\F$, using Holder's inequality and It\^o's isometry, we have	 	
\begin{equation}\label{ex11}
\begin{aligned}\E\Big(\1_{A}&\big|V(Z_{z, i}^{\eps,\delta}(T))-V(z)\big|\Big)\\
\leq&\left|\E\1_A\int_0^{T}\Upsilon\big(Z_{z, i}^{\eps,\delta}(t), \alpha^\eps(t)\big)dt\right|+\E\1_A\int_0^{T}\dfrac{\delta}2\left(\frac{2\bar c}{\bar a}\lambda^2(\alpha^\eps(t))+\rho^2(\alpha^\eps(t))\right)dt\\&+\frac{2\bar c}{\bar a}\E\1_A\left|\int_0^{T}\sqrt{\delta}\lambda(\alpha^\eps(t))dW_1(t)\right|+\E\1_A\left|\int_0^{T}\sqrt{\delta}\rho(\alpha^\eps(t))dW_2(t)\right|\\
\leq&T(\E\1_A)^{\frac12}\left(\E\int_0^{T}\left[\Upsilon\big(Z_{z, i}^{\eps,\delta}(t), \alpha^\eps(t)\big)+\dfrac\delta2\left(\frac{2\bar c}{\bar a}\lambda^2(\alpha^\eps(t))+\rho^2(\alpha^\eps(t))\right)\right]dt\right)^{\frac12}\\
&+\delta\sqrt{\PP(A)}\left(\E\int_0^T\left(\frac{2\bar c}{\bar a}\lambda^2(\alpha^\eps(t))+\rho^2(\alpha^\eps(t))\right)dt\right)^{\frac12}\\
\leq&  K_4T(1+|z|)\sqrt{\PP(A)},
\end{aligned}
\end{equation}
where the last inequality follows from \eqref{lm5.1a} and the boundedness of $\rho(i)$ and $\lambda(i)$.
If $A=\Omega$, we have
\begin{equation}\label{ex11a}
\dfrac1{T}\E\Big(\big|V(Z_{z, i}^{\eps,\delta}(T))-V(z)\big|\Big)\leq  K_4(1+|z|).
\end{equation}
Let $\hat H_{T}>H$ such that $\bar X_z(t)\vee \bar Y_z(t)\leq \hat H_{T}$ for all $z\in[0,H]^2,\, 0\leq t\leq {T}$ and
 $$\bar{d}_H=\sup\left\{\left|\dfrac{\partial \bar\Upsilon}{\partial x}(x, y)\right|, \left|\dfrac{\partial \bar\Upsilon}{\partial y}(x, y)\right|: (x,y)\in\R^2_+, x\vee y\leq \hat H_{T}\right\}.$$
Let $\varsigma>0$. Lemma \ref{lm2.2} implies that there are $\delta_0,\eps_0$ such that if $\eps<\eps_0,\delta<\delta_0$,
\begin{equation}\label{ex5}
\PP\left\{|\bar X_z(t)-X_{z, i}^{\eps,\delta}(t)|+|\bar Y_z(t)-Y_{z, i}^{\eps,\delta}(t)|<1\wedge\dfrac{\gamma_0}{2\bar{d}_H}, ~\text{for all}~  t\in[0,{T}]\right\}>1-\dfrac\varsigma6, \,z\in \bar D.
\end{equation}
On the other hand, if $|\bar X_z(t)-X_{z, i}^{\eps,\delta}(t)|+|\bar Y_z(t)-Y_{z, i}^{\eps,\delta}(t)|<1\wedge\dfrac{\gamma_0}{2\bar d_H}$, we have
\begin{equation}\label{ex6}
\begin{aligned}
\bigg|\dfrac{1}{T}\int_0^{T}&\Upsilon(Z_{z, i}^{\eps,\delta}(t), \alpha^\eps(t))dt
- \dfrac{1}{T}\int_0^{T}\bar\Upsilon(\bar Z_{z, i}(t))dt\bigg|\\
\leq&\dfrac1{T}\left|\int_0^{T}\Big(\bar\Upsilon(Z_{z, i}^{\eps,\delta}(t))-\bar\Upsilon(\bar Z_{z, i}(t))\Big)dt\right|\\
& +\dfrac1{T}\left|\int_0^{T}\Big(\Upsilon(Z_{z, i}^{\eps,\delta}(t), \alpha^\eps(t))-\bar\Upsilon(Z_{z, i}^{\eps,\delta}(t))\Big)dt\right|\\
\leq&\dfrac{\gamma_0}2+\dfrac{F_H}{T}\int_0^{T}\sum_{j\in\M}\big|\1_{\{\alpha^\eps(t)=j\}}-v_j\big|dt
\end{aligned}
\end{equation}
where
$F_H:=\sup\{|\Upsilon(z, i)| i\in\M, z\in[0, K_{T}+1]^2\}.$
In view of \cite[Lemma 2.1]{HYZ},
\begin{equation}\label{ex7}
\E\bigg|\dfrac1T\int_0^{T}\sum_{j\in\M}\big|\1_{\{\alpha^\eps(t)=j\}}-v_j\big|dt\bigg|^2=\E\bigg|\dfrac\eps{T}\int_0^{T/\eps}\sum_{j\in\M}\big|\1_{\{\alpha(t)=j\}}-v_j\big|dt\bigg|^2\leq \dfrac{\kappa}{T}\eps
\end{equation}	
for some constant $\kappa>0.$
On the one hand,
\begin{equation}\label{ex9}
\E\dfrac1{T}\left|\int_0^{T}\Big(-\frac{2\bar c}{\bar a}\lambda(\alpha^\eps(t))dW_1(t)-\rho(\alpha^\eps(t))dW_2(t)\Big)\right|^2\leq\frac{4\bar c^2}{\bar a^2}\lambda_M^2+\rho_M^2.
\end{equation}

Combining \eqref{e0-lm5.2}, \eqref{ex5}, \eqref{ex6}, \eqref{ex7}, and \eqref{ex9}, we can reselect $\eps_0$ and $\delta_0$ such that for $\eps<\eps_0,\delta<\delta_0$ we have
\begin{equation}\label{ex8}
\PP\left\{\dfrac{-1}{T}\int_0^{T}\Upsilon(\alpha^\eps(t), Z_{z, i}^{\eps,\delta}(t))dt\leq-0.5\gamma_0\right\}\geq 1-\dfrac\varsigma3,\,z\in D_1, i\in\M,
\end{equation}
\begin{equation}\label{ex8b}
\PP\Big\{X_{z, i}^{\eps,\delta}(T)\vee Y_{z, i}^{\eps,\delta}(T)\leq H \mbox{ (or equivalently } Z_{z, i}^{\eps,\delta}(T)\in D)\Big\}\geq1-\dfrac\varsigma3, z\in D_1
,\end{equation}
and
\begin{equation}\label{ex8a}
\PP\left\{\delta\vartheta+\dfrac{\sqrt{\delta}}{T}\left|\int_0^{T}\Big(\frac{2\bar c}{\bar a}\lambda(\alpha^\eps(t))dW_1(t)+\rho(\alpha^\eps(t))dW_2(t)\Big)dt\right|<0.25\gamma_0\right\}>1-\dfrac\varsigma3
\end{equation}
where
$\vartheta=\frac12\left(\frac{2\bar c}{\bar a}\lambda_M^2+\rho_M^2\right).$
Consequently, for any $(z, i)\in D_1\times\M$, there is a subset $\Omega_{z, i}^{\eps,\delta}\subset\Omega$ with $\PP(\Omega_{z, i}^{\eps,\delta})\geq1-\varsigma$
in which we have $Z_{z, i}^{\eps,\delta}(T)\in D$ and
\begin{equation}\label{ex10}
\begin{aligned}
\dfrac1{T}\big(V(Z_{z, i}^{\eps,\delta}({T}))-V(z)\big)\leq&\dfrac{-1}{T}\int_0^{T}\Upsilon(\alpha^\eps(t), Z_{z, i}^{\eps,\delta}(t))dt+\delta\vartheta\\&+\dfrac1{T}\Big|\int_0^{T}\sqrt{\delta}\Big(\frac{2\bar c}{\bar a}\lambda(\alpha^\eps(t))dW_1(t)+\rho(\alpha^\eps(t))dW_2(t)\Big)\Big|\\
\leq& -0.25\gamma_0
\end{aligned}
\end{equation}
On the other hand, we deduce from \eqref{ex11a} that for $z\in D$,
\begin{equation}\label{ex10a}
\PP\left\{\dfrac1{T}\big(V(Z_{z, i}^{\eps,\delta}({T}))-V(z)\big)\leq\Lambda\right\}\geq 1-\varsigma,
\end{equation}
where $\Lambda :=\frac{ K_4(1+2H)}\varsigma$.
Moreover, it also follows from \eqref{ex11a} that for $z\in D\setminus D_1$
$$\E V(Z_{z, i}^{\eps,\delta}({T})\leq \sup_{z\in D\setminus D_1}\big(V(z)+ K_4{T}|z|\big).$$
Define
\begin{equation}\label{ex10d}
L_1:=\sup_{z\in D\setminus D_1}V(z)+\Lambda T,~ L_2:=L_1+0.25\gamma_0,
\end{equation}
as well as $D_2:=\{(x, y)\in\inte\R_+^{2}: (x, y)\in D, V(x, y)>L_2\}$ and $U(z)=V(z)\vee L_1.$ It is clear that
\begin{equation}\label{ex12a}
U(z_2)-U(z_1)\leq|V(z_2)-V(z_1)| \text{ for any }z_1, z_2\in\R^{2\circ}_+.
\end{equation}
It follows from  \eqref{ex11}
that for any $\delta,\eps<1$, $A\in\F$, and $z\in D$, we have
\begin{equation}\label{ex12}
\dfrac1{T}\E\1_{A}\Big|V(Z_{z, i}^{\eps,\delta}(T))-V(z)\Big|\leq K_4(2H+1)\sqrt{\PP(A)}
.\end{equation}
Applying \eqref{ex12} and \eqref{ex12a} with $A=\Omega\setminus\Omega_{z, i}^{\eps,\delta}$,
we get
\begin{equation}\label{ex12b}
\dfrac1{T}\E\1_{\Omega\setminus\Omega_{z, i}^{\eps,\delta}}\Big[U(Z_{z, i}^{\eps,\delta}(T))-U(z)\Big]\leq K_4(2H+1)\sqrt{\varsigma},\text{ if } z\in D_1.
\end{equation}
In view of \eqref{ex10}, for
 $z\in D_2\text{ we have }V\big(Z_{z, i}^{\eps,\delta}(T)\big)<V(z)-0.25\gamma_0T.$
 By the definition of $D_2$,
 we also have $L_1\leq V(z)-0.25\gamma_0T.$
Thus, for any $z\in D_2$ and $\omega\in\Omega^{\eps,\delta}_{z, i}$
$$U\big(Z_{z, i}^{\eps,\delta}(T)\big)=L_1\vee V\big(Z_{z, i}^{\eps,\delta}(T)\big)\leq V(z)-0.25\gamma_0T=U(z)-0.25\gamma_0T,$$
which implies
\begin{equation}\label{ex10c}
\dfrac1{T}\Big[\E\1_{\Omega_{z, i}^{\eps,\delta}}U(Z_{z, i}^{\eps,\delta}({T}))-\E\1_{\Omega_{z, i}^{\eps,\delta}}U(z)\Big]\leq-0.25\gamma_0\PP(\Omega_{z, i}^{\eps,\delta})\leq -0.25\gamma_0(1-\varsigma).
\end{equation}
Combining \eqref{ex12b} with \eqref{ex10c}
\begin{equation}\label{ex13}
\dfrac1{T}\Big[\E U(Z_{z, i}^{\eps,\delta}({T}))-U(z)\Big]\leq-0.25\gamma_0(1-\varsigma)+ K_4(2H+1)\sqrt{\varsigma}, \ \forall  z\in D_2.
\end{equation}
For $z\in D_1\setminus D_2$,
 and $\omega\in\Omega_{z, i}^{\eps,\delta}$, we have from \eqref{ex10}  that $ V(Z_{z, i}^{\eps,\delta}({T}))\leq V(z)$. This shows that
$U(Z_{z, i}^{\eps,\delta}({T}))=L_1\vee V(Z_{z, i}^{\eps,\delta}({T}))\leq U(z)=V(z)\vee L_1.$
Hence, for $z\in D_1\setminus D_2$ and $\omega\in \Omega_{z, i}^{\eps,\delta}$ one has $$U(Z_{z, i}^{\eps,\delta}({T}))-U(z)\leq 0.$$
This and \eqref{ex12b} imply
\begin{equation}\label{ex14}
\dfrac1{T}\Big[\E U(Z_{z, i}^{\eps,\delta}({T}))-U(z)\Big]\leq K_4(2H+1)\sqrt{\varsigma}, \ \forall  z\in D_1\setminus D_2.
\end{equation}
If $z\in D\setminus D_1$, $U(z)=L_1$ and we have from \eqref{ex10a} and \eqref{ex10d} that
$$\PP\big\{U(Z_{z, i}^{\eps,\delta}({T}))=L_1\big\}= \PP\big\{V(Z_{z, i}^{\eps,\delta}({T}))\leq L_1\big\}\geq 1-\varsigma.$$
Thus
$$\PP\{U(Z_{z, i}^{\eps,\delta}({T}))=U(z)\}\geq 1-\varsigma.$$
Use \eqref{ex12} and \eqref{ex12a} again to arrive at
\begin{equation}\label{ex16}
\dfrac1{T}\Big[\E U(Z_{z, i}^{\eps,\delta}({T}))-U(z)\Big]\leq K_4(2H+1)\sqrt{\varsigma}, \ \forall \,z\in D\setminus D_1.
\end{equation}
On the other hand, equations \eqref{ex11a} and \eqref{ex12a} imply
\begin{equation}\label{ex17}
\dfrac1{T}\Big[\E U(Z_{z, i}^{\eps,\delta}({T}))-U(z)\Big]\leq  K_4(1+|z|), \   z\in\inte\R_+^{2}.
\end{equation}
Pick an arbitrary $(z_0, i_0)\in\inte\R_+^{2}\times\M$.
An application of the Markov property yields
$$
\begin{aligned}
\dfrac1{T}\Big[&\E U(Z_{z_0, i_0}^{\eps,\delta}((n+1){T}))-\E U(Z_{z_0, i_0}^{\eps,\delta}({nT}))\Big]\\
&=\sum_{i\in\M}\int_{\inte\R_+^{2}}\dfrac1{T}\Big[\E U(Z_{z, i}^{\eps,\delta}({T}))-U(z)\Big]\PP\Big\{Z_{z_0, i_0}^{\eps,\delta}(n{T})\in dz, \alpha^\eps(t)=i\Big\}.
\end{aligned}
$$
Combining \eqref{ex13}, \eqref{ex14},  \eqref{ex16}, and \eqref{ex17}, we get
$$
\begin{aligned}
\dfrac1{T}\Big[\E& U(Z_{z_0, i_0}^{\eps,\delta}((n+1){T}))-\E U(Z_{z_0, i_0}^{\eps,\delta}({nT}))\Big]\\
\leq&-\big[0.25\gamma_0(1-\varsigma)- K_4(2H+1)\sqrt{\varsigma}\big]\PP\big\{Z_{z_0, i_0}^{\eps,\delta}(n{T})\in D_2\big\}\\
&  + K_4(2H+1)\sqrt{\varsigma}\PP\big\{Z_{z_0, i_0}^{\eps,\delta}(n{T})\in D\setminus D_2\big\}\\
&+  K_4	\E\1_{\{Z_{z_0, i_0}^{\eps,\delta}(n{T})\notin D\}}\big(1+|Z_{z_0, i_0}^{\eps,\delta}(n{T})|\big)\\
\leq&-0.25\gamma_0(1-\varsigma)\PP\big\{Z_{z_0, i_0}^{\eps,\delta}(n{T})\in D_2\big\}+ K_4(2H+1)\sqrt{\varsigma}\\
&+  K_4\PP\big\{Z_{z_0, i_0}^{\eps,\delta}(nT)\notin D\big\}\E\big(1+|Z_{z_0, i_0}^{\eps,\delta}(nT)|\big).
\end{aligned}
$$

Note that
\bea \ad \liminf\limits_{k\to\infty}\dfrac1k\sum_{n=0}^{k-1}\dfrac1{T}\Big[\E U(Z_{z_0, i_0}^{\eps,\delta}((n+1){T}))-\E U(Z_{z_0, i_0}^{\eps,\delta}({nT}))\Big]\\
\aad \
=\liminf\limits_{k\to\infty}\dfrac1{kT}\E U(Z_{z_0, i_0}^{\eps,\delta}(k{T}))\geq0.
\eea
This forces
\begin{equation}\label{ex17a}
\begin{aligned}
0.25\gamma_0(1-\varsigma)&\limsup\limits_{k\to\infty}\dfrac1k\sum_{n=0}^{k-1}\PP\big\{Z_{z_0, i_0}^{\eps,\delta}(n{T})\in D_2\big\}\\
\leq & K_4(2H+1)\sqrt{\varsigma}+ K_3\limsup\limits_{k\to\infty}\dfrac1k\sum_{n=1}^k\PP\big\{Z_{z_0, i_0}^{\eps,\delta}(nT)\notin D\big\}\E\big(1+|Z_{z_0, i_0}^{\eps,\delta}(nT)|\big).
\end{aligned}
\end{equation}
In view of Lemma \ref{lm5.1}, we can choose $H=H(\Delta)$ independent of $(z_0, i_0)$ such that
\begin{equation}\label{ex18}
\limsup\limits_{t\to\infty}\PP\big\{Z_{z_0, i_0}^{\eps,\delta}(t)\notin D\big\}\leq \limsup\limits_{t\to\infty}\dfrac{\E(|Z_{z_0, i_0}^{\eps,\delta}(t)|)}
{H}\leq\dfrac{\Delta}{6}
,\end{equation}
and
\bea\ad  K_4\limsup\limits_{t\to\infty}\PP\big\{Z_{z_0, i_0}^{\eps,\delta}(t)\notin D\big\}\E\big(1+|Z_{z_0, i_0}^{\eps,\delta}(t)|\big)\\
\aad \ \leq  K_4\limsup\limits_{t\to\infty}\dfrac{\Big[\E\big(1+|Z_{z_0, i_0}^{\eps,\delta}(t)|\big)\Big]^2}{H}\leq\dfrac{0.1\gamma_0}{6}\Delta.
\eea
Hence, we have
\begin{equation}\label{ex19}
 K_4\limsup\limits_{k\to\infty}\dfrac1k\sum_{n=1}^k\PP\big\{Z_{z_0, i_0}^{\eps,\delta}(nT)\notin D\big\}\E\big(1+|Z_{z_0, i_0}^{\eps,\delta}(nT)|\big)\leq\dfrac{0.1\gamma_0}{6}\Delta.
\end{equation}
Choose $\varsigma=\varsigma(H)>0$ such that $0.25\gamma_0(1-\varsigma)\geq0.2\gamma_0$ and $ K_4(2H+1)\sqrt{\varsigma}\leq\dfrac{0.1\gamma_0}{6}\Delta$
and let $\eps_0=\eps_0(\varsigma, H),\delta_0(\varsigma, H)$ such that \eqref{ex8}, \eqref{ex8b}, and \eqref{ex8a} hold.
As a result, we get from \eqref{ex17a} and \eqref{ex19} that
\begin{equation}\label{ex20}
\limsup\limits_{k\to\infty}\dfrac1k\sum_{n=1}^k\PP\big\{Z_{z_0, i_0}^{\eps,\delta}(nT)\in D_2\big\}\leq \dfrac{\Delta}{6}.
\end{equation}
This together with \eqref{ex18} and \eqref{ex20} shows that for any $\eps<\eps_0, \delta<\delta_0$, we have
\begin{equation}\label{ex21}
\liminf\limits_{k\to\infty}\dfrac1k\sum_{n=1}^k\PP\big\{Z_{z_0, i_0}^{\eps,\delta}(nT)\in D\setminus D_2\big\}\geq1- \dfrac{\Delta}3.
\end{equation}
One can conclude the proof by noting that the set $D\setminus D_2$ is a compact subset of $\inte\R_+^{2}$.
\end{proof}

\begin{lm}\label{lm5.4}
There are $ L>1$, $\eps_1=\eps(\Delta)>0$, and $\delta_1=\delta_1(\Delta)>0$ such that as long as $0<\eps<\eps_1, 0<\delta<\delta_1$, we have
$$\liminf\limits_{T\to\infty}\dfrac1T\int_0^T\PP\{Z^{\eps,\delta}_{z_0, i_0}(t)\in [ L^{-1}, L]^2\}\geq 1-\Delta,   (z_0, i_0)\in\inte\R_+^{2}\times\M.$$
\end{lm}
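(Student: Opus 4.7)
The plan is to upgrade the discrete-time sampling estimate of Lemma \ref{lm5.3} to a continuous-time time-average estimate via a short-interval continuity argument based on Lemma \ref{lm2.2}. Given $\Delta>0$, first invoke Lemma \ref{lm5.3} to obtain $T>0$, a compact set $\mathcal K\subset\R^{2,\circ}_+$, and thresholds $\eps_0,\delta_0>0$ for which
$$\liminf_{k\to\infty} \frac1k\sum_{n=0}^{k-1}\PP\{Z^{\eps,\delta}_{z_0,i_0}(nT)\in\mathcal K\}\geq 1-\tfrac{\Delta}{3}$$
uniformly in $(z_0,i_0)\in\R^{2,\circ}_+\times\M$.

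Next, I would fix $L>1$ so that the compact set
$\mathcal K' := \{\bar Z_z(s):z\in\mathcal K,\ s\in[0,T]\}$
and an open $\gamma$-neighborhood of it (for some fixed $\gamma>0$) both sit well inside $(L^{-1},L)^2$. Such an $L$ exists because the averaged drift in \eqref{ex2} has the multiplicative form $x\bar\varphi(x,y)$, $y\bar\psi(x,y)$, so the two coordinate axes are invariant under the averaged flow; consequently $\mathcal K'$ is a compact subset of $\R^{2,\circ}_+$. Lemma \ref{lm2.2}, applied to the system \eqref{ex1} on the time interval $[0,T]$, then furnishes a constant $\kappa=\kappa(L,\gamma,T)>0$ such that for every $z\in\mathcal K$ and $i\in\M$,
$$\PP\bigl\{Z^{\eps,\delta}_{z,i}(s)\in[L^{-1},L]^2\text{ for all }s\in[0,T]\bigr\}\geq 1-\exp\bigl(-\tfrac{\kappa}{\eps+\delta}\bigr).$$

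Combining this with the strong Markov property at time $nT$ yields, for every $t\in[nT,(n+1)T]$,
$$\PP\{Z^{\eps,\delta}_{z_0,i_0}(t)\in[L^{-1},L]^2\}\geq \PP\{Z^{\eps,\delta}_{z_0,i_0}(nT)\in\mathcal K\}\bigl(1-e^{-\kappa/(\eps+\delta)}\bigr).$$
Integrating in $t$ over each block $[nT,(n+1)T]$, summing over $n=0,\dots,k-1$, and dividing by $kT$ gives
$$\frac{1}{kT}\int_0^{kT}\PP\{Z^{\eps,\delta}_{z_0,i_0}(t)\in[L^{-1},L]^2\}\,dt\geq \bigl(1-e^{-\kappa/(\eps+\delta)}\bigr)\cdot\frac1k\sum_{n=0}^{k-1}\PP\{Z^{\eps,\delta}_{z_0,i_0}(nT)\in\mathcal K\}.$$
Letting $k\to\infty$ and choosing $\eps_1\le\eps_0$, $\delta_1\le\delta_0$ so small that $e^{-\kappa/(\eps+\delta)}\le\Delta/3$ for all $\eps<\eps_1$, $\delta<\delta_1$ leaves a right-hand side of at least $(1-\Delta/3)^2\ge 1-2\Delta/3>1-\Delta$. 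A standard comparison argument shows that $\liminf_{T'\to\infty}\frac1{T'}\int_0^{T'}$ coincides with the $\liminf$ along the subsequence $T'=kT$ up to a $O(1/k)$ boundary error, which completes the proof.

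The only nontrivial step is the choice of $L$: one must verify that averaged trajectories emanating from $\mathcal K$ cannot approach the coordinate axes within time $T$, so that a single $L$ works uniformly in $z\in\mathcal K$. This is immediate from the invariance of the axes under the averaged flow together with the compactness of $\mathcal K\subset\R^{2,\circ}_+$, and requires no hypothesis beyond Assumption \ref{asp5.1}. Everything else is a routine assembly of Lemma \ref{lm5.3}, Lemma \ref{lm2.2}, and the Markov property.
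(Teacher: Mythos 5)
Your proposal is correct and follows the same skeleton as the paper's proof (sample at times $nT$ via Lemma \ref{lm5.3}, prove a per-block confinement estimate from the compact set, then combine with the Markov property and pass from the subsequence $kT$ to general $T$), but it differs in how the per-block estimate is obtained. The paper handles that step by invoking a modification of \cite[Theorem 2.1]{JJ}: compactness of $D\setminus D_2$ yields an $L>1$ with $\PP\{Z_{z,i}(t)\in[L^{-1},L]^2\}>1-\Delta/3$ for all $z\in D\setminus D_2$, $i\in\M$, $t\in[0,T]$, an estimate that is uniform in $\eps,\delta\in(0,1)$ and makes no use of the averaging limit. You instead use Lemma \ref{lm2.2}: since the averaged flow of \eqref{ex2} maps the compact set $\mathcal K\subset\R^{2,\circ}_+$ into a compact subset $\mathcal K'$ of the open quadrant over $[0,T]$ (the axes being invariant), a $\gamma$-tube around $\mathcal K'$ fits inside $[L^{-1},L]^2$, and the large-deviation closeness gives confinement with probability $1-e^{-\kappa/(\eps+\delta)}$, which you then make $\geq 1-\Delta/3$ by shrinking $\eps_1,\delta_1$. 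This is legitimate here because the lemma only claims the bound for sufficiently small $\eps,\delta$, and because the paper itself already applies Lemma \ref{lm2.2} to the system \eqref{ex1} in the proof of Lemma \ref{lm5.3} (so your reliance on its applicability to \eqref{ex1}, whose coefficients involve the merely continuous, bounded $h$, is no stronger an assumption than the paper's own). What each route buys: the paper's argument via \cite{JJ} avoids any smallness requirement on $\eps,\delta$ for this step and works directly with the switching diffusion; yours avoids the external citation, and in fact delivers a stronger statement (confinement for all $s\in[0,T]$ simultaneously, with exponentially small failure probability) at the cost of requiring $\eps,\delta$ small and of the compactness argument for the flow image, which you justify correctly. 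The Markov-property recombination and the passage from $\liminf$ along $T'=kT$ to arbitrary $T'$ (boundedness of the integrand gives a factor $k/(k+1)$) are the same as in the paper.
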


\begin{proof}
Let $D$ and $T$ as in Lemma \ref{lm5.3}.
Since $D\setminus D_2$ is a compact set in $\inte\R_+^{2}$, by a modification of the proof of \cite[Theorem 2.1]{JJ}, we can  show that there is a positive constant $ L>1$ such that
$\PP\{Z_{z, i}(t)\in [ L^{-1}, L]^2\}>1-\dfrac\Delta3,   z\in D\setminus D_2, i\in M, 0\leq t\leq T$.
Hence, it follows from the Markov property of the solution that
\bea \ad\PP\{Z_{z_0, i_0}^{\eps, \delta}(t)\in[ L^{-1}, L]^2\}\\
\aad\quad \geq \Big(1-\dfrac{\Delta}3\Big)\PP\big\{Z_{z_0, i_0}^{\eps,\delta}(jT)\in D\setminus D_2\big\}, t\in[jT, jT+T].\eea
Consequently,
\begin{align*}
\liminf\limits_{k\to\infty}\dfrac1{kT}&\int_{0}^{kT}\PP\big\{Z_{z_0, i_0}^{\eps,\delta}(t)\in [ L^{-1}, L]^2\big\}\,dt\\&\geq \Big(1-\dfrac{\Delta}3\Big)\liminf\limits_{k\to\infty}\dfrac1{k}\sum_{j=0}^{k-1}\PP\big\{Z_{z_0, i_0}^{\eps,\delta}(jT)\in D\setminus D_2)\big\}\geq 1-\Delta.
\end{align*}
It is readily seen from this estimate that
$$\liminf\limits_{T\to\infty}\dfrac1{T}\int_{0}^{T}\PP\big\{Z_{z_0, i_0}^{\eps,\delta}(t)\in [ L^{-1}, L]^2\big\}dt\geq 1-\Delta.$$
\end{proof}

\mainnex*
\begin{proof}
The conclusion of Lemma \ref{lm5.4}
 is sufficient for the existence of a unique invariant probability measure
 $\mu^{\eps,\delta}$ in $\inte\R_+^{2}\times\M$ of $(Z^{\eps,\delta}(t), \alpha^\eps(t))$ (see \cite{LB} or \cite{MT}). Moreover, the empirical measures
 $$\dfrac1t\int_0^t\PP\big\{Z^{\eps,\delta}_{z_0,i_0}(s)\in \cdot\big\}ds, t>0$$
 converge weakly to the invariant probability measure $\mu^{\eps,\delta}$ as $t\to\infty$.
Applying Fatou's lemma to the above estimate yields
$$\mu^{\eps,\delta}([ L^{-1}, L]^2)\geq \Delta, \ \forall \,\eps<\eps_0, \delta<\delta_0.$$
This tightness implies Theorem \ref{thm5.1}.
\end{proof}

\subsection{An Example}
\label{sec:6}
In this section we provide a specific 
example
under the setting
of Section \ref{sec:5}.
We consider the following stochastic predator-prey model with Holling functional response in a switching regime
\begin{equation}\label{nex1}
\left\{\begin{array}{ll} \disp d\xx (t)&\!\!\! \disp =\bigg[r(\alpha^\eps(t))\xx(t)\left(1-\dfrac{\xx(t)}{K(\alpha^\eps(t)}\right)-\dfrac{m(\alpha^\eps(t))\xx(t)\yy(t)}{a(\alpha^\eps(t))
+b(\alpha^\eps(t))\xx(t)}\bigg]dt\\
& \quad \ +\sqrt{\delta}\lambda(\alpha^\eps(t))\xx(t)dW_1(t)\\
\disp d\yy(t)&\!\!\! \disp =\yy(t)\bigg[-d(\alpha^\eps(t))+\dfrac{e(\alpha^\eps(t))
m(\alpha^\eps(t))\xx(t)}{a(\alpha^\eps(t))+b(\alpha^\eps(t))\xx(t)}-f(\alpha^\eps(t))\yy(t)\bigg]dt\\
&\quad \ +\sqrt{\delta}\rho(\alpha^\eps(t))\xx(t)dW_2(t),\end{array}\right.
\end{equation}
where $W_1$ and $W_2$ are two independent Brownian motions, $\alpha^\eps(t)$ is a Markov chain, that is
 independent of the Brownian motions, with state space $\M=\{1, 2\}$ and generator $Q/\eps$  where
\begin{equation}\label{eq-q}
Q =
\left( \begin{array}{rr}  -1 &  1 \\
 1  &  -1 \\ \end{array} \right),
 \end{equation}
and $r(1)=0.9, r(2)=1.1, K(1)=4.737, K(2)=5.238, m(1)=1.2, m(2)=0.8, a(1)=a(2)=1, b(1)=b(2)=1, d(1)=0.85, d(2)=1.15, e(1)=1, e(2)=2.5, f(1)=0.03, f(2)=0.01, \lambda(1)=1, \lambda(2)=2, \rho(1)=3, \rho(2)=1$.
As $\eps$ and $\delta$ tend to 0, solutions of equation \eqref{nex1} converge  to the corresponding solutions of
\begin{equation}\label{nex2}
\left\{\begin{array}{lll}\disp {d \over dt} {x}(t)=x(t)\left(1-\dfrac{x(t)}{5}\right)-\dfrac{x(t)y(t)}{1+x(t)},
\\
\disp {d \over dt} {y}(t)=y(t)\left(-1+\dfrac{1.6x(t)}{1+x(t)}-0.02y(t)\right)
\end{array}\right.
\end{equation}
on any finite time interval $[0,T]$. The system \eqref{nex2}  has the unique equilibrium $(x^*, y^*)=(1.836, 1.795)$.
Modifying \cite[Theorem 2.6]{SR} it can be seen that the solution of equation
\eqref{nex2} has a unique limit cycle $\Gamma$ that attracts all positive solutions except for
 $(x^*, y^*)$.
Moreover, it is easy to check that the drift
 \begin{equation}\label{nex4}
\left(\begin{array}{l}r(i)x(t)\left(1-\dfrac{x(t)}{K(i)}\right)-\dfrac{m(i)x(t)y(t)}{a(i)+b(i)x(t)}\\
y(t)\left(-d(i)-\dfrac{e(i)m(i)x(t)}{a(i)+b(i)x(t)}\right)-f(i)y(t)\end{array}\right)
\end{equation}
does not vanish at $(1.836, 1.795)$. The assumptions of Theorem \ref{thm5.1} hold in this example. As a result, the family $(\mu^{\eps,\delta})_{\eps>0}$ converges weakly as $\eps\to  0$ to the stationary distribution of \eqref{nex2} that is concentrated on the limit cycle $\Gamma$.
We illustrate this convergence in Figures \ref{f1}, \ref{f2} and \ref{f3} below by graphing sample paths of \eqref{nex1} for different values of $(\eps,\delta)$.

\begin{figure}[h]
\centering
\includegraphics[totalheight=2.2in,width=2.1in]{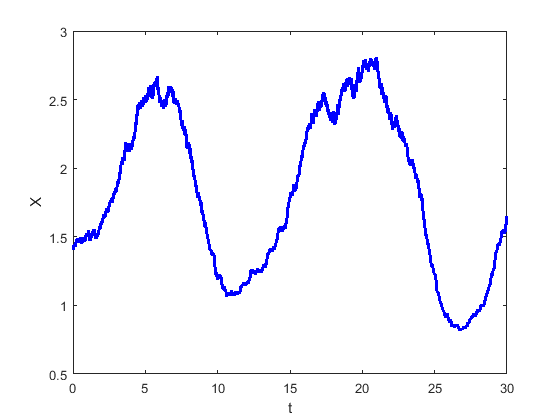}
\includegraphics[totalheight=2.2in,width=2.1in]{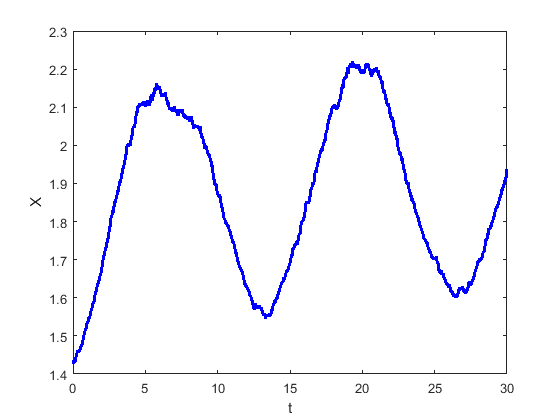}
\includegraphics[totalheight=2.2in,width=2.1in]{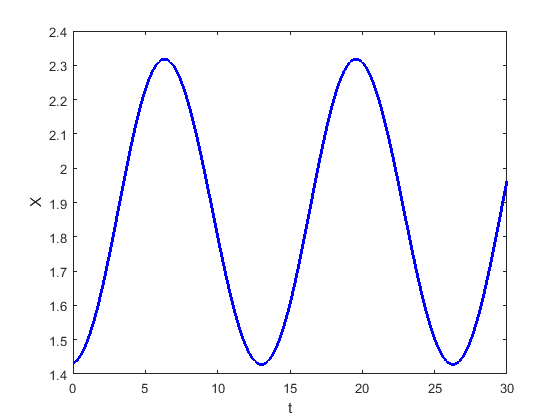}
\caption{From left to right: Graphs of the $x^{\eps,\delta}(t)$ component of \eqref{nex1} with $(\eps,\delta)=(0.001, 0.001)$, $(\eps,\delta)=(0.00005, 0.00005)$ and $x(t)$ of the averaged system \eqref{nex2} respectively.}
\label{f1}
\end{figure}

\begin{figure}[h]
\centering
\includegraphics[totalheight=2.2in,width=2.1in]{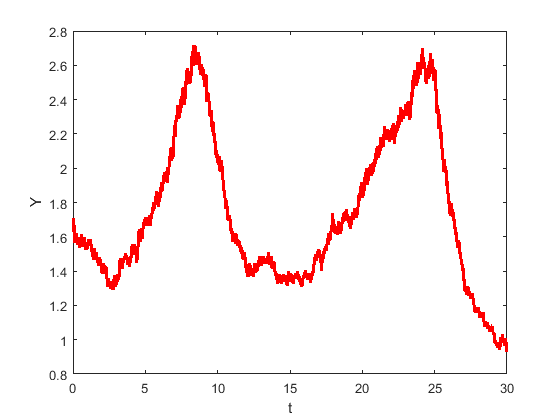}
\includegraphics[totalheight=2.2in,width=2.1in]{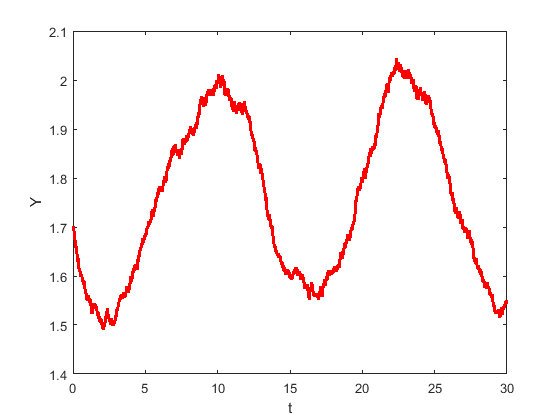}
\includegraphics[totalheight=2.2in,width=2.1in]{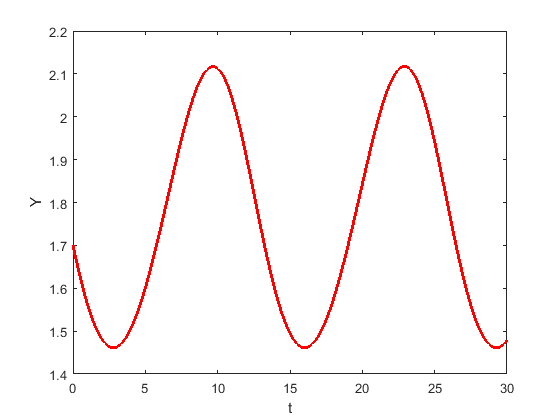}
\caption{From left to right: Graphs of the $y^{\eps,\delta}(t)$ component of \eqref{nex1} with $(\eps,\delta)=(0.001, 0.001)$, $(\eps,\delta)=(0.00005, 0.00005)$ and $y(t)$ of the averaged system \eqref{nex2} respectively.}
\label{f2}
\end{figure}
\begin{figure}[h]
\centering
\includegraphics[totalheight=2.2in,width=2.1in]{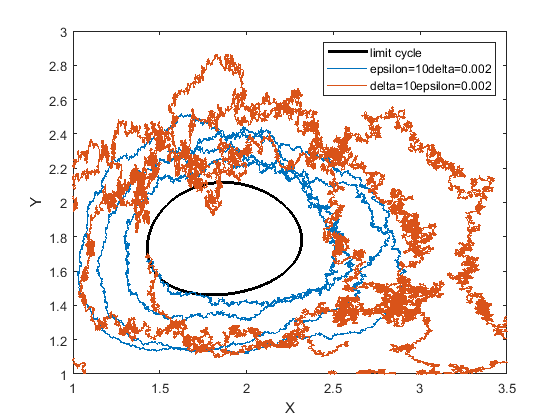}
\includegraphics[totalheight=2.2in,width=2.1in]{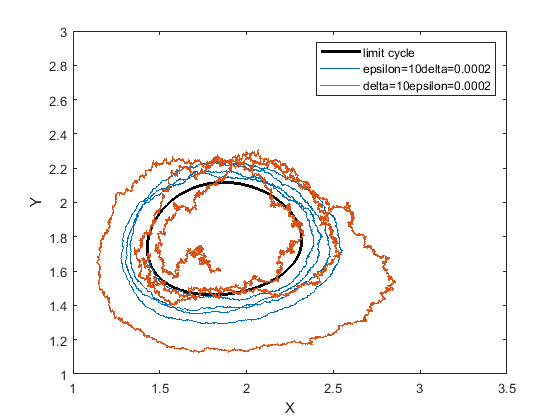}
\includegraphics[totalheight=2.2in,width=2.1in]{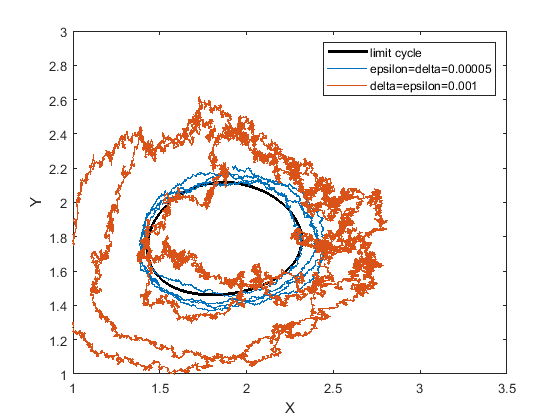}
\caption{Phase portraits of \eqref{nex1} for different values of $\eps$ and $\delta$.}
\label{f3}
\end{figure}
\begin{figure}[h]
	\centering
	\includegraphics[totalheight=3.2in,width=3.1in]{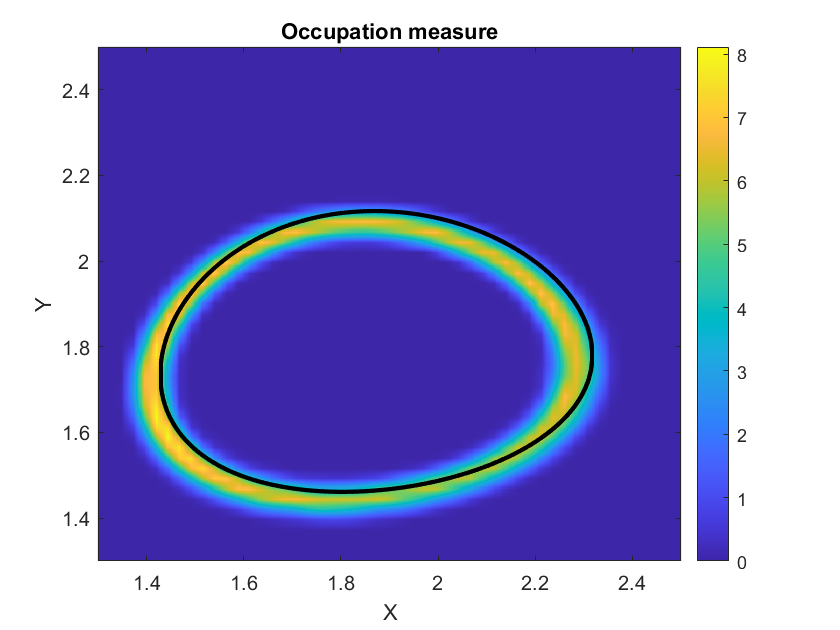}
	\includegraphics[totalheight=3.2in,width=3.1in]{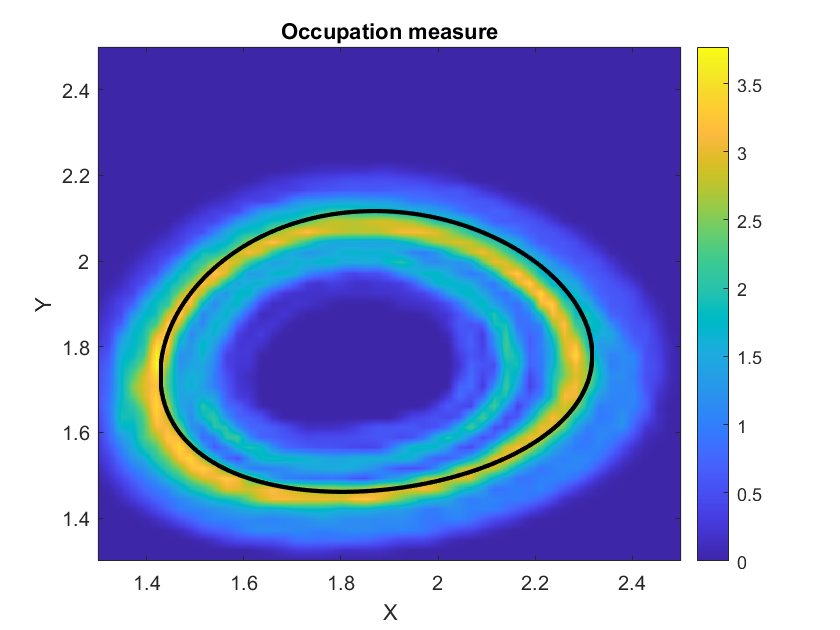}
	\caption{The two figures illustrate an approximate density of the occupation measure on the time interval $[0,1000]$ with $\eps=\delta=0.001$ on the right and $\eps=\delta=0.0001$ on the left. The step size is $h=0.0001$. We divide the domain in $50\times 50$ cells, approximate the density by the frequency of the process staying in each cell, and then interpolate. The simulations support the theoretical results that the occupation measures converge to the invariant probability measure.}
	\label{f3}
\end{figure}
\subsection{Another Example}
Consider the following system
\begin{equation}\label{ex2-e1}
\begin{cases}
d\xx(t)=& \left[-\yy(t)+\xx\left(1-a(\ale(t))\left((\xx(t))^2+(\yy(t))^2\right)\right)(1+(\zz(t))^2)\right]dt\\
&+\sqrt{\delta} dW_1(t)\\
d\yy(t)=&\left[ -\xx(t)+\yy\left(1-a(\ale(t))*\left((\xx(t))^2+(\yy(t))^2\right)\right)(1+(\zz(t))^2)\right]\\
&+\sqrt{\delta} dW_2(t)\\
d\zz(t)=&\left[\zz(t)\left(1-(\zz(t))^2-b(\ale(t)\left((\xx(t))^2+(\yy(t))^2\right)\right)\right]dt+\sqrt{\delta} dW_3(t)
\end{cases}\end{equation}
with $a(1)=0.8, a(2)=1.2$, $b(1)=3.5$ and $b(2)=4.5$ and $Q$ defined as in \eqref{eq-q}.
The limit system is
\begin{equation}
\begin{cases}
\frac{dx}{dt}=&-y+x(1-x^2-y^2)(1+z^2)\\
\frac{dy}{dt}=&x+y(1-x^2-y^2)(1+z^2)\\
\frac{dz}{dt}=&z(1-z^2-4x^2-4y^2)\\
\end{cases}
\end{equation}
which has a unique limit cycle $\{z=0, x^2+y^2=1\}$ and three hyperbolic equilibria: $(0,0,0)$ whose eigenvalues have both negative and positive parts and $(0, 0, \pm 1)$ which are sources.
\begin{figure}[h]
	\centering
	\includegraphics[totalheight=3.2in,width=3.1in]{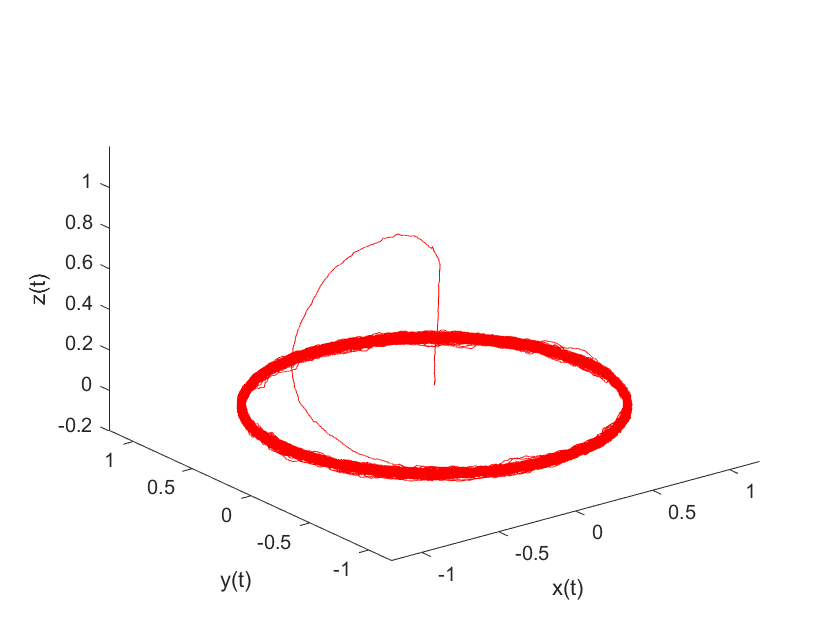}
	\includegraphics[totalheight=3.2in,width=3.3in]{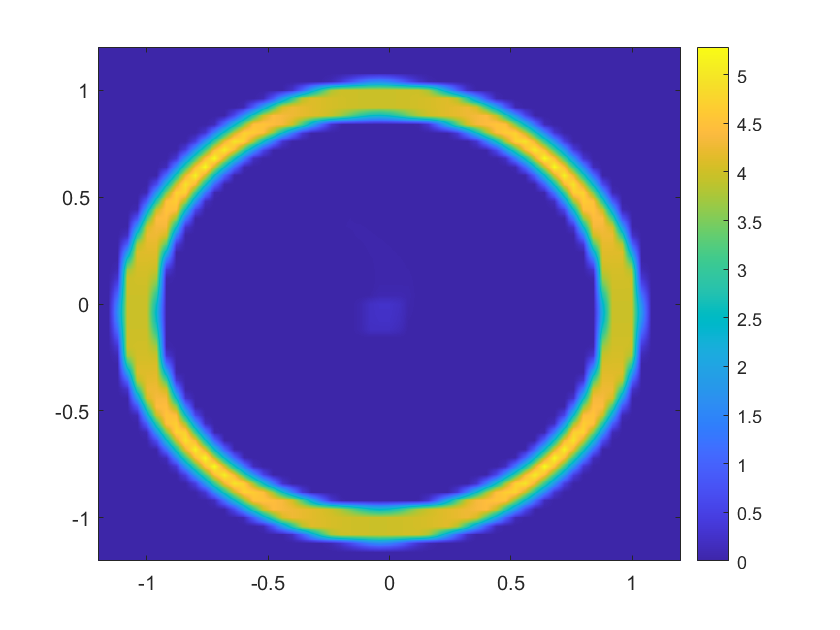}
	\caption{The left figure is a sample path starting at the equilibrium $(0,0,0)$ of the limit system. We see that this path has a tendency of approaching $(0,0,1)$ before it is finally attracted to the proximity of the limit cycle. The right figure is an approximated density of the occupation measure of two components $x,y$ on the time interval $[0,1000]$.}
	\label{f3}
\end{figure}

\clearpage

\bibliographystyle{amsalpha}
\bibliography{cycle}

\appendix
\section{Proofs of Lemmas from Section \ref{sec:3}}\label{a:1}
\begin{lm}\label{lm2.1}
For any $R, T, \gamma>0$, there exists a number $k_1=k_1(R, T, \gamma)>0$ such that for all sufficiently small $\delta$,
$$\PP\{|X^{\eps,\delta}_{x, i}(t)-\xi^{\eps}_{x, i}(t)|\geq\gamma,  ~\text{for some}~t\in [0,T]\}<\exp\left(-\dfrac{k_1}{\delta}\right), x\in B_R,$$
where $X^{\eps,\delta}_{x, i}(t)$ and $\xi^{\eps}_{x, i}(t)$ are the solutions to
the systems \eqref{eq2.1} and \eqref{eq2.3} that have initial value $(x, i)$.
\end{lm}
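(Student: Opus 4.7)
\textbf{Proof proposal for Lemma \ref{lm2.1}.}

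The plan is a standard localization plus Gronwall plus exponential martingale argument, the only subtlety being that $f$ and $\sigma$ are merely locally Lipschitz, so we must first confine both processes to a compact set determined by $R$ and $T$ alone.

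First I would control $\xi^\eps_{x,i}$ deterministically. Using Assumption \ref{asp1}(ii) applied to $\Phi(\xi^\eps_{x,i}(t))$, the inequality $(\nabla\Phi)'(x)f(x,i)\leq a(\Phi(x)+1)$ together with Gronwall gives $\Phi(\xi^\eps_{x,i}(t))\leq e^{aT}(\Phi(x)+1)-1$ uniformly in $\eps$, $i$, and $x\in B_R$. By the coercivity of $\Phi$ there exists $R_0=R_0(R,T)$ such that $\xi^\eps_{x,i}(t)\in B_{R_0}$ for all $t\in[0,T]$, $x\in B_R$, $i\in\M$. Fix $R':=R_0+1$, let $L=L(R')$ denote a common Lipschitz constant for $f(\cdot,i)$ and $\sigma(\cdot,i)$ on $B_{R'}$ (for all $i\in\M$), and let $C=C(R')$ bound $|\sigma(\cdot,i)|$ there. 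Introduce the stopping time $\tau^{\eps,\delta}_{x,i}:=\inf\{t\geq 0:|X^{\eps,\delta}_{x,i}(t)|\geq R'\}$.

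Next I would control the difference $Y(t):=X^{\eps,\delta}_{x,i}(t)-\xi^\eps_{x,i}(t)$ up to time $\tau^{\eps,\delta}_{x,i}\wedge T$. On this interval both processes lie in $B_{R'}$, so
\[
|Y(t\wedge\tau^{\eps,\delta}_{x,i})|\leq L\int_0^{t\wedge\tau^{\eps,\delta}_{x,i}}|Y(s)|\,ds+\sqrt{\delta}\,|M(t\wedge\tau^{\eps,\delta}_{x,i})|,
\]
where $M(t):=\int_0^t\sigma(X^{\eps,\delta}_{x,i}(s),\alpha^\eps(s))\,dW(s)$. Gronwall yields
\[
\sup_{s\leq T}|Y(s\wedge\tau^{\eps,\delta}_{x,i})|\leq \sqrt{\delta}\,e^{LT}\sup_{s\leq T}|M(s\wedge\tau^{\eps,\delta}_{x,i})|.
\]
Applying Lemma \ref{l:exp} componentwise to the stopped martingale (whose quadratic variation is deterministically bounded by $C^2T$ on $[0,\tau^{\eps,\delta}_{x,i}\wedge T]$), I get a constant $k_0=k_0(R',T)>0$ such that for every $A>0$,
\[
\PP\Big\{\sup_{s\leq T}|M(s\wedge\tau^{\eps,\delta}_{x,i})|\geq A\Big\}\leq 2d\exp\!\big(-k_0A^2\big).
\]
Choosing $A=\gamma'/(\sqrt{\delta}e^{LT})$ with $\gamma':=\gamma\wedge\tfrac12$ produces a bound of the form $\exp(-k_1/\delta)$ for
\[
\PP\Big\{\sup_{s\leq T}|Y(s\wedge\tau^{\eps,\delta}_{x,i})|\geq \gamma'\Big\}.
\]

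Finally I would remove the stopping. On $\{\tau^{\eps,\delta}_{x,i}\leq T\}$ one has $|X^{\eps,\delta}_{x,i}(\tau^{\eps,\delta}_{x,i})|=R'=R_0+1$ while $|\xi^\eps_{x,i}(\tau^{\eps,\delta}_{x,i})|\leq R_0$, hence $|Y(\tau^{\eps,\delta}_{x,i})|\geq 1>\gamma'$; consequently
\[
\{\tau^{\eps,\delta}_{x,i}\leq T\}\subseteq\Big\{\sup_{s\leq T}|Y(s\wedge\tau^{\eps,\delta}_{x,i})|\geq\gamma'\Big\},
\]
so the same exponential bound controls $\PP(\tau^{\eps,\delta}_{x,i}\leq T)$ and thereby the unstopped event $\{\sup_{s\leq T}|Y(s)|\geq\gamma\}$. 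Shrinking $k_1$ if necessary to absorb the factor $2d$ yields the claim. The only mildly delicate step is the first one (establishing the uniform-in-$\eps,\delta$ confinement), which is why the dependence of $k_1$ on $R$, $T$, $\gamma$ is allowed but not on $\eps$ or $\delta$; everything else is routine Gronwall and martingale estimation.
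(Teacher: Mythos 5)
Your proof is correct and lands on the same high-level structure as the paper's (confine $\xi^\eps$, localize, Gronwall, exponential-martingale estimate, de-localize), but the two localization mechanisms and the Gronwall inputs differ in a noteworthy way. The paper introduces a smooth cutoff $h$ and replaces $(f,\sigma)$ by $(f_h,\sigma_h)$, runs the truncated SDE globally in time, and applies It\^o's formula to the \emph{squared} difference $|Y^{\eps,\delta}-\xi^\eps|^2$; the exponential martingale inequality is then used on the cross-term $\int (Y-\xi)'\sigma_h\,dW$ together with its compensator, after which Gronwall on the squared quantity gives the bound, and the condition $|\xi^\eps|<R_T-\gamma$ automatically forces $\zeta>T$ on the good set. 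You instead localize with a hard stopping time $\tau^{\eps,\delta}$, work at first order with $|Y(t)|$ directly from the integral equation (no It\^o formula), apply the exponential martingale inequality componentwise to $M(t\wedge\tau)$ whose quadratic variation is deterministically bounded on the stopped interval, and then remove the localization by noting that on $\{\tau\leq T\}$ the gap $|Y(\tau)|\geq 1>\gamma'$ is already captured by the event being bounded. Your route avoids the It\^o expansion of $|Y-\xi|^2$ and the auxiliary truncated coefficients, at the small price of a slightly more delicate de-localization step (which you handle correctly with $\gamma':=\gamma\wedge\tfrac12$); the paper's truncation approach keeps the coefficients globally Lipschitz so all estimates run on the whole of $[0,T]$ without stopping, which some readers find cleaner. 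Both yield a constant $k_1$ depending only on $R,T,\gamma$, as required.
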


\begin{proof}
By (i) and (ii) of Assumption \ref{asp1}, we can deduce the existence and boundedness of a unique solution to equation \eqref{eq2.3}
using
the
Lyapunov function method. Moreover, we can find $R_T>R>0$ such that almost surely
\begin{equation}\label{e:bound}
|\xi^{\eps}_{x, i}(t)|<R_T-\gamma, ~\text{for all}~t\in[0, T], x\in B_R.
\end{equation}
Let $h(\cdot)$ be a twice differentiable function with compact support such that $h(x)=1$ if $|x|\leq R_T$ and $h(x)=0$
if $|x|\geq R_T+1$.
Put $f_h(x, i)=h(x)f(x, i)$, $\sigma_h(x, i)=h(x)\sigma(x, i)$ and
let $Y^{\eps,\delta}_{x, i}(t)$ be the solution starting at $(x, i)$ of
\begin{equation}
d Y(t)= f_h(Y(t),\alpha^\eps(t))dt+\sqrt{\delta}\sigma_h(Y(t), \alpha^\eps(t)dW(t)
\end{equation}
Note that $Y^{\eps,\delta}_{x, i}(t)=X^{\eps,\delta}_{x, i}(t)$ up to the time $\zeta=\inf\{t>0: |X^{\eps,\delta}_{x, i}(t)|>R_T\}$.
Because of \eqref{e:bound},
the solution  $\xi^{\eps}_{x, i}(t)$ to \eqref{eq2.3} coincides with the solution to
$$d Z(t)= f_h(Z(t),\alpha^\eps(t))dt$$ with
 starting point $x\in B_R$ and  $t\in [0,T]$.
We have from the generalized It\^o's formula that
for all $x\in B_R$ and $t\in [0,T]$,
\begin{equation}
\begin{aligned}
|Y&^{\eps,\delta}_{x, i}(t)-\xi^{\eps}_{x, i}(t)|^2\\
\leq&2\int_0^{t}|Y^{\eps,\delta}_{x, i}(s)-\xi^{\eps}_{x, i}(s)||f_h(Y^{\eps,\delta}_{x, i}(s),\alpha^\eps(s))-f_h(\xi^{\eps}_{x, i}(s),\alpha^\eps(s))|ds\\
&+\int_0^{t}\delta\trace\big((\sigma_h\sigma_h')(Y^{\eps,\delta}_{x, i}(s),\alpha^\eps(s))\big)ds\\
&+2\sqrt{\delta}\left|\int_0^{t}\big(Y^{\eps,\delta}_{x, i}(s)-\xi^{\eps}_{x, i}(s)\big)'\sigma_h(Y^{\eps,\delta}_{x, i}(s),\alpha^\eps(s)\big)dW(s)\right|.
\end{aligned}
\end{equation}
Define
\begin{align*}
A=\bigg\{\omega\in \Omega ~:~ &\left|\int_0^{t}\sqrt{\delta}\left(Y^{\eps,\delta}_{x, i}(s)-\xi^{\eps}_{x, i}(s)\right)'\sigma_h(Y^{\eps,\delta}_{x, i}(s),\alpha^\eps(s))dW(s)\right|\\
&\qquad-\dfrac1\delta\int_0^{t}\delta\left|Y^{\eps,\delta}_{x, i}(s)-\xi^{\eps}_{x, i}(s)\right|^2\left\|\sigma_h\sigma_h'(Y^{\eps,\delta}_{x, i}(s),\alpha^\eps(s))\right\|ds\leq k_1\, \text{ for all }\, t\in[0,T]\bigg\}.
\end{align*}
By the exponential martingale inequality, we get that for any $\delta<k_1$
\begin{align*}
\PP(A)\geq 1-2\exp\left(-\dfrac{2k_1}\delta\right)\geq1-\exp\left(-\frac{k_1}\delta\right).
\end{align*}
Since $f_h$ is Lipschitz and $\sigma_h$ is bounded, there is
an $M_1>0$ such that for all $\omega\in A$,
\begin{equation}
\begin{aligned}
|Y^{\eps,\delta}_{x, i}&(t)-\xi^{\eps}_{x, i}(t)|^2\\
\leq & 2 \int_0^{t}|Y^{\eps,\delta}_{x, i}(s)-\xi^{\eps}_{x, i}(s)||f_h(Y^{\eps,\delta}_{x, i}(s),\alpha^\eps(s))-f_h(\xi^{\eps}_{x, i}(s),\alpha^\eps(s))|ds\\
&\ +2\int_0^{t}\big|Y^{\eps,\delta}_{x, i}(s)-\xi^{\eps}_{x, i}(s)\big|^2\|\sigma_h\sigma_h'(Y^{\eps,\delta}_{x, i}(s),\alpha^\eps(s)\big)\|ds\\
&\ +\int_0^{t}\delta\trace\big((\sigma_h\sigma_h')(Y^{\eps,\delta}_{x, i}(s),\alpha^\eps(s))\big)ds+2\int_0^tk_1ds\\
\leq & M_1\int_0^t|Y^{\eps,\delta}_{x, i}(t)-\xi^{\eps}_{x, i}(t)|^2ds+(2k_1+M_1\delta) t
.\end{aligned}
\end{equation}
For each $t\in[0, T]$, an application of Gronwall's inequality implies that on the set $A$,
$$|Y^{\eps,\delta}_{x, i}(t)-\xi^{\eps}_{x, i}(t)|^2\leq (2k_1+M_1\delta)T\exp(M_1T)<\gamma^2$$ for $0<\delta<k_1$ sufficiently small.
It also follows from this inequality that for $\omega\in A$ and $0<\delta<k_1$ sufficient small, we have $\zeta>T$, which implies $$|X^{\eps,\delta}_{x, i}(t)-\xi^{\eps}_{x, i}(t)|^2=|Y^{\eps,\delta}_{x, i}(t)-\xi^{\eps}_{x, i}(t)|^2<\gamma^2,$$ for all $ t\in[0,T]$.
\end{proof}

\begin{lm}\label{lm2.1b}
For each $x$ and $\gamma$, we can find $k_{\gamma,x}=k_{\gamma,x}(T)>0$ such that
$$\PP\left\{\left|\xi^{\eps}_{x, i}(t)-\bar X_x(t)\right|\geq\gamma ~\text{for some}~t\in [0,T]\right\}\leq\exp\left(-\frac{k_{\gamma,x}}\eps\right),$$
where $\bar X_x(t)$ is the solution to equation \eqref{eq2.2} with the initial value $x$.
\end{lm}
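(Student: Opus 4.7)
The core idea is that the fast switching $\alpha^\eps_i$ equilibrates on time scale $\eps$, so that along the fixed deterministic curve $\bar X_x$ the additive functional
$$J^\eps(t) := \int_0^t \big[f(\bar X_x(s), \alpha^\eps_i(s)) - \bar f(\bar X_x(s))\big] ds$$
is exponentially small in $\eps$ uniformly on $[0,T]$, and this alone drives the convergence $\xi^\eps_{x,i} \to \bar X_x$. First I would localize: since $\bar X_x([0,T])$ is compact, fix $R>0$ with $\bar X_x([0,T]) \subset B_{R-1}$ and let $\tau_R := \inf\{t : |\xi^\eps_{x,i}(t)|\geq R\}$. By Assumption \ref{asp1}(i), on $B_R$ each $f(\cdot,i)$ is Lipschitz with some constant $L=L(R)$, and so is $\bar f$.

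For $t \leq \tau_R \wedge T$ I would decompose
$$\xi^\eps_{x,i}(t) - \bar X_x(t) = \int_0^t \big[f(\xi^\eps_{x,i}(s),\alpha^\eps_i(s)) - f(\bar X_x(s),\alpha^\eps_i(s))\big] ds + J^\eps(t),$$
bounding the first integral pointwise by $L\int_0^t|\xi^\eps_{x,i}(s)-\bar X_x(s)| ds$. The integrand of $J^\eps$, namely $g(s,j) := f(\bar X_x(s), j) - \bar f(\bar X_x(s))$, is continuous in $s$ and centered against $\nu$ in $j$, so the problem reduces to establishing
$$(\star)\qquad \PP\Big\{\sup_{t\in[0,T]} |J^\eps(t)| \geq \tfrac12 \gamma e^{-LT}\Big\} \leq \exp(-k_{\gamma,x}/\eps).$$

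I would prove $(\star)$ via a Poisson-equation / exponential martingale argument. Let $\phi(s,\cdot)$ be the unique centered solution of $Q\phi(s,\cdot) = -g(s,\cdot)$; since $\bar X_x \in C^1([0,T])$ and $f$ is locally Lipschitz, both $\phi$ and $\partial_s \phi$ are bounded on $[0,T]\times\M$. Dynkin's formula for $\phi(s,\alpha^\eps_i(s))$ with generator $Q/\eps$ gives
$$J^\eps(t) = \eps\big[\phi(0,\alpha^\eps_i(0)) - \phi(t,\alpha^\eps_i(t))\big] + \eps\int_0^t \partial_s \phi(s,\alpha^\eps_i(s))\, ds + \eps\, M^\eps(t),$$
where $M^\eps$ is a mean-zero pure-jump martingale. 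The first three terms are uniformly $O(\eps)$. The exponential tail for $\eps\, M^\eps(t)$ follows either from a Bennett-type inequality for pure-jump martingales of intensity $O(1/\eps)$, or, more in line with the tools used throughout the paper, from the large deviation principle for the occupation measure of $\alpha^\eps_i$ (as in \eqref{ergodic-alpha} and the reference \cite{HYZ}) applied to a partition of $[0,T]$ into $N(\eps)$ subintervals on which $g(s,\cdot)$ is frozen at its left endpoint, together with a union bound and the uniform continuity of $s \mapsto g(s,\cdot)$.

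With $(\star)$ in hand, Gronwall's inequality applied to $u(t):= |\xi^\eps_{x,i}(t) - \bar X_x(t)|$ on the complementary good event yields $u(t) \leq \tfrac12\gamma e^{-LT} e^{Lt} \leq \gamma/2$ for $t\in[0,\tau_R \wedge T]$, which in turn forces $\xi^\eps_{x,i}([0,T])\subset B_{R-1/2}\subset B_R$ so that $\tau_R > T$ almost surely on the good event, and the lemma follows. The hard part will be producing the uniform exponential tail $(\star)$ with a rate $k_{\gamma,x}$ that depends only on $(\gamma, x, T)$: the Poisson-equation reduction is standard, but the martingale-tail step (or equivalently the LDP-plus-union-bound step) must be executed carefully because $g(s, j)$ is time-varying and the supremum over $t \in [0,T]$ is needed.
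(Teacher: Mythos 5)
Your proof is correct, but it takes a more self-contained route than the paper, which disposes of this lemma by simply invoking the large deviation principle for the occupation measure of the fast Markov chain proved in \cite{HYZ} (and appeals to Assumption \ref{asp1}(i)--(ii) for existence and boundedness of solutions to \eqref{eq2.3}). What the LDP buys is exactly what you labeled $(\star)$: uniform closeness of $\frac1t\int_0^t f(\bar X_x(s),\alpha^\eps(s))\,ds$ to $\int_0^t\bar f(\bar X_x(s))\,ds$ with an $e^{-c/\eps}$ tail, after which the paper's argument is implicitly the same localization-plus-Gronwall step you describe. Your alternative Poisson-equation decomposition $J^\eps(t)=\eps[\phi(0,\alpha^\eps(0))-\phi(t,\alpha^\eps(t))]+\eps\int_0^t\partial_s\phi\,ds+\eps M^\eps(t)$ is a genuinely more elementary derivation of $(\star)$: the first two terms are deterministically $O(\eps)$, and the compensated pure-jump martingale $M^\eps$ has jump size $O(\|\phi\|_\infty)$ and intensity $O(1/\eps)$, so a Bennett/Cram\'er-type exponential martingale bound (taking Laplace parameter $\lambda=a/\eps$ with $a$ small) gives $\PP\{\sup_{[0,T]}|\eps M^\eps(t)|>b\}\le \exp(-k/\eps)$ directly, without invoking a full LDP. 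This is attractive because it makes the lemma independent of the external LDP reference, at the cost of verifying smoothness of $s\mapsto\phi(s,\cdot)$ (which holds since $\bar X_x\in C^1$ and $Q$ is a fixed irreducible matrix, so the Poisson equation has a unique centered solution depending smoothly on $g$). Your remark that the constant $k_{\gamma,x}$ need only be uniform in the single trajectory $\bar X_x$ --- with uniformity over $x\in B_R$ deferred to the proof of Lemma \ref{lm2.2} via the Lipschitz/compactness argument there --- is also exactly how the paper organizes it. In short: both routes work; the paper's is shorter because it outsources the key exponential estimate to \cite{HYZ}, yours is longer but more transparent and avoids the external dependence.
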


\begin{proof}
This follows from the large deviation principle shown in \cite{HYZ}. We note that the existence and boundedness of a unique solution to equation \eqref{eq2.3} follows from parts (i) and (ii) of Assumption \ref{asp1}.
\end{proof}

By combining the results of Lemmas \ref{lm2.1} and \ref{lm2.1b} we can
prove
Lemma \ref{lm2.2}.

\begin{proof}[Proof of Lemma \ref{lm2.2}]
By virtue of Lemma \ref{lm2.1b}, for each $x$ and $\gamma$, we have
$$\PP\left\{\left|\xi^{\eps}_{x, i}(t)-\bar X_x(t)\right|\geq\dfrac\gamma6  ~\text{for some}~t\in [0,T]\right\}\leq\exp\left(-\frac{k_{\gamma/6,x}}\eps\right).$$
By part $(ii)$ of Assumption \ref{asp1} together with the Lyapunov method for \eqref{eq2.3}, we can find $H_{R, T}>0$ such that $|\xi^{\eps}_{x, i}(t)|\leq H_{R, T}$ and $|\bar X_x(t)|\leq H_{R, T}$ for all $|x|\leq R$ and $0\leq t\leq T$. Since $f(\cdot, i)$ is locally Lipschitz for all $i\in\M$, there is a constant $M_2>0$ such that $|f(u, i)-f(v, i)|\leq M_2|u-v|$ for all $|u|\vee|v|\leq H_{R, T}$ and $i\in\M$. Using the Gronwall
inequality, we have for $|x|\vee|y|\leq R$, $i\in\M$ and any $t\in [0,T]$
\bea \ad |\xi^{\eps}_{x, i}(t)-\xi^{\eps}_{y,i}(t)|\leq|x-y|\exp(M_2T),\\
\ad
|\bar X_x(t)-\bar X_y(t)|\leq|x-y|\exp(M_2T).\eea
Let $\lambda=\dfrac{\gamma}6\exp(-M_2T)$. It is easy to see that for $|x-y|<\lambda$,
$$\PP\left\{\left|\xi^{\eps}_{y,i}(t)-\bar X_y(t)\right|\geq\dfrac\gamma2 ~\text{for some}~t\in [0,T] \right\}\leq\exp\Big(-\frac{k_{\gamma/6,x}}\eps\Big).$$
By the compactness of $B_R$, for $\gamma>0$, there is $k_2=k_2(R, T,\gamma)>0$ such that for all $x\in B_R$,
$$\PP\left\{\left|\xi^{\eps}_{x, i}(t)-\bar X_x(t)\right|\geq\dfrac\gamma2 ~\text{for some}~t\in [0,T]\right\}\leq \exp\left(-\frac{k_2}\eps\right).$$
Combining this with Lemma \ref{lm2.1}, we have
\bea \disp \PP\left\{\left|X^{\eps,\delta}_{x,i}(t)-\bar X_x(t)\right|\geq\gamma ~\text{for some}~t\in [0,T] \right\}\ad <\exp\left(-\frac{k_1(R, T,\gamma/2)}\delta\right)+\exp\left(-\frac{k_2}\eps\right)\\
\ad <\exp\left(-\frac{\kappa}{\eps+\delta}\right)\eea
for a suitable $\kappa=\kappa(R,T,\gamma)$ and for all sufficiently small $\eps$ and $\delta$.
\end{proof}

\begin{proof}[Proof of Lemma \ref{lm3.1}]
Let $n^{\eps,\delta}\in\N$ such that $n^{\eps,\delta}-1<\dfrac{1}{a^{\eps,\delta}}\leq n^{\eps,\delta}.$
We consider events $A_k=\{X_{x,i}^{\eps, \delta}(t)\in N, \ \forall (k-1)\ell <t\leq k\ell \}$.
We have $\PP(A_1)\leq1-a^{\eps,\delta}.$
By the Markov property,
\bea \disp\PP(A_k|A_1,...,A_{k-1})\ad =\int_{N}\PP\left\{\check\tau_y^{\eps, \delta}\leq \ell \right\}\PP\Big\{X_{x,i}^{\eps, \delta}((k-1)\ell )\in dy\Big|A_1,...,A_{k-1}\Big\}\\
\ad\leq1-a^{\eps,\delta}.\eea
As a result,
$$\PP(A_1A_2\cdots A_n)\leq (1-a^{\eps,\delta})^{n^{\eps,\delta}}$$
Since $\lim\limits_{\eps\to0}a^{\eps,\delta}=0$, we deduce that $\lim\limits_{\eps\to0}(1-a^{\eps,\delta})^{n^{\eps,\delta}}=e^{-1}$, which means that $(1-a^{\eps,\delta})^{n^{\eps,\delta}}<1/2$ for sufficiently small $\eps$.
\end{proof}
\begin{proof}[Proof of Lemma \ref{lm2.5}]
The proof is omitted because
it states some standard properties of dynamical systems.
Interested readers can refer to \cite{perko13}.
\end{proof}
\end{document}